\DeclareMathOperator{\Sp}{Sp} 
\DeclareMathOperator{\Sym}{Sym}
\DeclareMathOperator{\GL}{GL}
\DeclareMathOperator{\TDS}{TDS}
\DeclareMathOperator{\sgn}{sgn}
\DeclareMathOperator{\supp}{supp}
\DeclareMathOperator{\Co}{Co}
\DeclareMathOperator{\ad}{Ad}
\newcommand{\msharp}{\mathbin{\sharp}}
\newtheorem{theorem}{Theorem}[section]
\newtheorem{lemma}[theorem]{Lemma}
\newtheorem{corollary}[theorem]{Corollary}
\newtheorem{remark}[theorem]{Remark}
\newcommand{\cA}{\mathcal{A}}
\newcommand{\cB}{\mathcal{B}}
\newcommand{\cF}{\mathcal{F}}
\newcommand{\cH}{\mathcal{H}}
\newcommand{\cI}{\mathcal{I}}
\newcommand{\cJ}{\mathcal{J}}
\newcommand{\cR}{\mathcal{R}}
\newcommand{\cU}{\mathcal{U}}
\newcommand{\cW}{\mathcal{W}}
\newcommand{\cSH}{\mathcal{SH}}
\newcommand{\cSC}{\mathcal{SC}}
\newcommand{\ZZ}{\mathbb{Z}}
\newcommand{\RR}{\mathbb{R}}
\let\SS\undefined
\newcommand{\HH}{\mathbb{H}}
\newcommand{\SS}{\mathbb{S}}
\newcommand{\ga}{\mathfrak{a}}
\renewcommand{\gg}{\mathfrak{g}}
\newcommand{\gh}{\mathfrak{h}}
\newcommand{\sptwor}{\mathfrak{sp}(2,\RR)}
\newcommand{\eps}{\varepsilon}
\def\tT{{\mbox{\tiny{T}}}}
\begin{document}
\title{Different faces of the shearlet group}
\author{
  Stephan Dahlke
  \thanks{
    FB12 Mathematik und Informatik, 
    Philipps-Universität Marburg, 
    Hans-Meerwein Straße,
    Lahnberge, 
    35032 Marburg, 
    Germany, 
    dahlke@mathematik.uni-kl.de
  },
\and
  Filippo De Mari\thanks{
    Dipartimento di Matematica,
    Universit\`a degli Studi di Genova,
    Via Dodecaneso 35,
    16146 Genova, 
    Italy,
    \{demari,devito\}@dima.unige.it
  }, 
\and
  Ernesto De Vito\footnotemark[2],
\and
  Sören Häuser
  \thanks{
    Fachbereich für Mathematik, 
    Technische Universität Kaiserslautern, 
    Paul-Ehrlich-Str. 31,
    67663 Kaiserslautern, 
    Germany,
    \{haeuser,steidl\}@mathematik.uni-kl.de  
  },
\and
  Gabriele Steidl\footnotemark[3],
\and
  Gerd Teschke
  \thanks{
    Institute for Computational Mathematics in Science and Technology, 
    Hochschule Neubrandenburg,
    University of Applied Sciences, 
    Brodaer Str. 2, 
    17033 Neubrandenburg, 
    Germany,
    teschke@hs-nb.de
  }
}
\date{\today}

\maketitle

\begin{abstract}
Recently, shearlet groups have received much attention in connection with shearlet transforms
applied for orientation sensitive image analysis and restoration. 
The square integrable representations of the shearlet groups provide not only the basis for the shearlet transforms 
but also for a very natural definition of scales of smoothness spaces, called shearlet coorbit spaces.
The aim of this paper is twofold: first we discover isomorphisms between shearlet groups
and other well-known groups, namely extended Heisenberg groups and subgroups of the symplectic group.
Interestingly, the connected shearlet group with positive dilations has an isomorphic copy in the symplectic group,
while this is not true for the full shearlet group with all nonzero dilations. 
Indeed we prove the general result that there exist, up to adjoint action of the symplectic group,
only one embedding of the extended Heisenberg algebra into the Lie algebra of the symplectic group. 

Having understood the various group isomorphisms 
it is natural to ask for the relations between coorbit spaces of isomorphic groups 
with equivalent representations. These connections are examined in the second part
of the paper. We describe how isomorphic groups 
with equivalent representations lead to isomorphic coorbit spaces.
In particular we apply this result to square integrable representations of the connected shearlet groups and metaplectic representations of
subgroups of the symplectic group.
This implies the definition of metaplectic coorbit spaces.

Besides the usual full and connected shearlet groups we also deal with Toeplitz shearlet groups.
\end{abstract}

\textbf{Keywords:} 
Shearlet group,
Heisenberg group,
Symplectic group,
Coorbit space theory

\textbf{Mathematics Subject Classifiation}
22D10, 
22E30, 
22E60, 
42B35,
42C15

%
\section{Introduction}\label{sec1}
%
The shearlet transform was originally developed in the inaugural paper \cite{LWWW02}.
Among other transforms applied in the analysis of directional information,
the continuous shearlet transform stands out because it is related to group theory, i.e., 
it can be derived from square integrable representations of the so-called shearlet group \cite{DKMSST08}.
Recently, the shearlet transform and its modifications have found wide applications in the analysis and restoration of images, see, e.g., \cite{Gro11a,GL09,GLL09,HS13,KKL13}.
Further, the shearlet group and its square integrable representations give rise to a natural scale of smoothness spaces, 
so-called shearlet coorbit spaces \cite{DHST13,DKST09,DST10} which can be considered as a special example of general coorbit spaces
introduced by Feichtinger and Gr\"ochenig \cite{FG88,FG89a,FG89b}.
An overview of recent developments on shearlets can be found in the book \cite{KL12a}.
On the other hand, other groups such as the Heisenberg group or the symplectic group have been playing  an important role in harmonic analysis, linear algebra and signal and image processing
for a long time. 
In particular, the Heisenberg group is one of the basic tools  for the
mathematical foundation of the short-time Fourier transform, see \cite{FG89a,Gro01} for details. 
For the symplectic group we refer to \cite{CDNT06a,CDNT06b,CDNT10} and references therein.

In this paper we ask for relations between the different groups and the corresponding coorbit spaces.
In particular, we discover isometries of the  connected shearlet group and its relatives 
to extended Heisenberg groups and subgroups of the symplectic group. Interestingly such results
do not hold true for the full shearlet group.
We show that isomorphic groups with equivalent representations give rise to equivalent coorbit spaces.

The first observation concerning the relationship between the extended Heisenberg group and the shearlet group is found in \cite{ST99}, 
where square integrability is shown, and it is recalled in the book \cite{KT13}. 
The first embedding of the two-dimensional continuous shearlet group into $\Sp(2,\RR)$ is found in \cite{Kin09}, 
and, with $\gamma=1$, implicitly appears in \cite{CDNT06a}. 
Similarly, the first embedding of shearlet-like groups with isotropic dilations in arbitrary dimensions (albeit with non-Toeplitz shearing matrices) 
into $\Sp(2,\RR)$ with matrices of the form $\Sigma\rtimes H$ is found in \cite{Kin09}. 
The isotropic shearlet-like groups were further explored in the paper \cite{CK12}, 
while the anisotropic case (including the embedding into $\Sp(2,\RR)$) was generalized to higher dimensions in \cite{CK14}.

\emph{Organization of the paper}:
In Section~\ref{sec2} we discover isomorphisms between shearlet and Toeplitz shearlet groups and other well-known groups. 
We show that the full and connected shearlet groups are isomorphic to full and connected extended Heisenberg groups, respectively.
Further, we prove that the connected shearlet group is isomorphic to a subgroup of the symplectic group, which 
holds also true for the connected Toeplitz shearlet group.  
Section~\ref{sec3} deals with the full shearlet group. We show that it is not possible to embed this group into the symplectic group or in any of its coverings. 
The proof is based on the general result that the Lie algebra of the extended Heisenberg group can only be embedded in one way into the Lie algebra of the symplectic group. 
This result stands also for its own and is presented in Section~\ref{sec4}. 
Finally, in Section~\ref{sec5} the very natural relations between coorbit spaces of isomorphic groups with equivalent representations are presented. 
Naturally, these coorbit spaces are also isomorphic. 
At the end of Section~\ref{sec5} we use our findings to introduce metaplectic coorbit spaces.
%
\section{Shearlet groups and their isomorphic relatives} \label{sec2}
%
In this section we show that the shearlet groups are isomorphic to extended Heisenberg groups and
that the connected shearlet group and Toeplitz shearlet group have an isomorphic subgroup within the symplectic group.
\subsection{Shearlet and Toeplitz shearlet groups}\label{subsec11}
For fixed $\gamma\in\RR$ 
(usually $0<\gamma < 1$, e.g., $\gamma = \frac{1}{d}$, to ensure directional selectivity)
and $a \in \RR^*:= \RR \setminus \{ 0 \}$
we introduce the \emph{dilation matrices} 
\begin{equation}\label{eq:dilationAndShearMatrices}
  A_{a,\gamma}
:=
  \begin{pmatrix}
    a & 0 \\
    0 & \sgn(a)\lvert a\rvert^\gamma I_{d-1}
  \end{pmatrix}
\qquad \text{and} \qquad
	A_a :=  a I_d
\end{equation}
and for $s=(s_1,\ldots,s_{d-1})^\tT\in\RR^d$ the \emph{shear} and \emph{Toeplitz shear matrices}
\begin{equation}\label{eq:shearmatrices}                                                                                                     
  S_s
:=
  \begin{pmatrix}
    1 & s^\tT \\
    0 & I_{d-1}
  \end{pmatrix}
\qquad \text{and} \qquad
  T_{s} 
:= 
  \begin{pmatrix} 
    1      & s_1   & s_2    & \ldots & s_{d-1} \\
    0      & 1     & s_1    & s_2    & \vdots  \\
    \vdots &\ddots & \ddots & \ddots & \vdots  \\
    \vdots &       & \ddots & 1      & s_1     \\
    0      & \dots & \ldots & 0      & 1
 \end{pmatrix}.
\end{equation}
Note that the product of two upper triangular Toeplitz matrices $T_s$ and $T_{s'}$ is again an upper triangular Toeplitz matrix
$T_{s \msharp s'}$ with
\begin{equation*}
  (s\msharp s')_i 
:= 
  s_i+s'_i + \sum_{j+k=i}s'_j s_k, \quad
  i=1,\ldots, d-1.
\end{equation*}
The shearlet and Toeplitz shearlet groups are defined as follows:
\begin{itemize}
 \item The (full) \emph{shearlet group} $\SS$ is the set $\RR^* \times \RR^{d-1} \times  (\RR \times \RR^{d-1})$
with the group operation
\begin{equation*}
  (a,s,t)\circ_{\SS}(a',s',t')
:=
  (aa', s + \lvert a\rvert^{1-\gamma}s', t + S_s A_{a,\gamma} t').
\end{equation*}
Using the notation $t=(t_1,\tilde{t})^\tT\in\RR^d$ the group law can be rewritten as
\begin{equation}\label{eq:shearlet_grouplaw_modified}
  (a,s,t_1,\tilde{t})\circ_{\SS}(a',s',t_1',\tilde{t}')
=
  (aa', s + \lvert a\rvert^{1-\gamma}s',
  t_1 + at_1' + \sgn(a)\lvert a\rvert^{\gamma}s^\tT \tilde{t}',
  \tilde{t} + \sgn(a)\lvert a\rvert^\gamma \tilde{t}').
\end{equation}
\item The \emph{connected shearlet group} $\SS^{+}$ is the set
$\RR^+\times \RR^{d-1} \times (\RR \times \RR^{d-1})$ with group law
\eqref{eq:shearlet_grouplaw_modified} reduced to positive $a = \lvert a\rvert =  \sgn(a)\lvert a\rvert$.
\item The (full) \emph{Toeplitz shearlet group} $\SS_T$ is the set $\RR^* \times \RR^{d-1} \times  \RR^{d}$
with the group operation
\begin{equation} \label{law_toeplitz}
  (a,s,t) \circ_{\SS^{+}_T} (a',s',t') 
= 
  (aa', s\msharp s', t + A_a T_{s}  t').
\end{equation}
\item The \emph{connected Toeplitz shearlet group} $\SS_T^{+}$ is the set
$\RR^+\times \RR^{d-1} \times \RR^{d}$ with group law \eqref{law_toeplitz}
restricted to positive $a$.
\end{itemize}
The four groups are locally compact groups, where the left and right Haar measures
of the shearlet groups are given by
\begin{equation*}
  d\mu_{\SS,l}(a,s,t)
:=
  \frac{1}{\lvert a \rvert^{d+1}}\, da\, ds\, dt
\quad\text{and}\quad
  d\mu_{\SS,r}(a,s,t)
:=
  \frac{1}{\lvert a \rvert}\, da\, ds\, dt.
\end{equation*}
see \cite{DKMSST08,DST10} 
and of the Toeplitz shearlet groups by 
\begin{equation*}
  d \mu_{\SS_T,l} (a,s,t) 
= 
  \frac{1}{\lvert a\rvert^{d+1}} \, da \, ds  \, dt
\quad\text{and}\quad
  d \mu_{\SS_T,r} (a,s,t) 
= 
  \frac{1}{\lvert a\rvert} \, da \, ds \, dt,
\end{equation*}
see \cite{DHT12,DT10}
with restriction to positive dilations $a$ for the connected groups.
The connected (Toeplitz) shearlet group is a subgroup of the the full (Toeplitz) shearlet group.
%
\subsection{Relation to Heisenberg groups}\label{subsec12}
%
The Heisenberg group and its polarized version are defined as follows:
\begin{itemize}
 \item 
The \emph{Heisenberg group} $\HH$  is the set 
  $\RR^{d-1} \times \RR \times \RR^{d-1}$ 
  endowed with the group operation
  \begin{equation*}
    (p,\tau,q)\circ_\HH (p',\tau',q')
  :=
    \bigl(p + p',\tau + \tau' + \tfrac{1}{2}(p^\tT q' - q^\tT p'), q+q'\bigr).
  \end{equation*}
\item  The \emph{polarized Heisenberg group} $\HH^{\operatorname{pol}}$
is the same set 
  $\RR^{d-1} \times \RR \times \RR^{d-1}$ 
  but with the group operation
  \begin{equation}\label{eq:group_law_polarized_heisenberg}
    (p,\tau,q)\circ_{\HH^{\operatorname{pol}}} (p',\tau',q') 
  := 
    (p + p', \tau + \tau' +  p^\tT q',q + q').
  \end{equation}
\end{itemize}
These Heisenberg groups are isomorphic with isomorphism given by 
\begin{equation*}
  \phi\colon \HH \to \HH^{\operatorname{pol}}, 
\quad
  (p,\tau,q) \mapsto (p,\tau + \tfrac{1}{2} p^\tT q,q).
\end{equation*}
This is why we usually write \emph{the} Heisenberg group.
If we set $a = a' = 1$ in \eqref{eq:shearlet_grouplaw_modified} we obtain
\begin{equation*}
  (1,s,t_1,\tilde{t})\circ_\SS(1,s',t'_1,\tilde{t}')
=
  (1, s + s', t_1 + t_1' + s^\tT \tilde{t}', \tilde{t} + \tilde{t}')
\end{equation*}
which looks very similar to the group law of the Heisenberg group in \eqref{eq:group_law_polarized_heisenberg}. 
We will show that the shearlet group is isomorphic to an extended 
Heisenberg group which is equipped with a dilation. 
For the general concept of group extensions we refer to \cite{Kna02}.
We briefly recall the notion of a semi-direct product.
Given 
a group $H$ and a group $G$ acting on $H$ by automorphisms,
i.e., a smooth map 
$\delta\colon G\times H\to H$ is defined such that
$\delta(g,\,\cdot\,)$ is an automorphism of $H$, we can extend $H$ by $G$
by forming the semi-direct product $H\rtimes G$.
The multiplication and inversion are determined by
\begin{equation*}
  (h,g)(h',g')
=
  (h \circ_H \delta(g,h'),g \circ_G g')
\quad\text{and}\quad
  (h,g)^{-1}
=
  (\delta(g^{-1},h^{-1}),g^{-1}).
\end{equation*}
The extended Heisenberg group is the semi-direct product of the Heisenberg group $\HH$ and $\RR^*$,
where $\RR^*$ acts on $\HH$ via the automorphism 
\begin{equation*}
  \delta_a^\gamma(p,\tau,q)
:=
  (\lvert a\rvert^{1-\gamma}p,a\tau, \sgn(a)\lvert a\rvert^\gamma q), \quad \gamma > 0,
\end{equation*}
for details see \cite{Hae14}.
In other words,
\begin{itemize}
\item
the \emph{extended Heisenberg group} is defined by
  $\HH_e := \HH \rtimes\RR^* $ with the group operation 
	\begin{multline*}
    (p,\tau,q,a)\circ_{\HH_e} (p',\tau',q',a') :=
  \\
    \bigl(      
      p + \lvert a\rvert^{1-\gamma} p',
      \tau + a \tau' + \tfrac{1}{2}\bigl(\sgn(a)\lvert a\rvert^\gamma p^\tT q' - \lvert a\rvert^{1-\gamma} q^\tT p'\bigr),
      q + \sgn(a)\lvert a\rvert^\gamma q',
      aa' 
    \bigr)
  \end{multline*}
	\item  and the \emph{extended polarized Heisenberg group} by
  $\HH^{\operatorname{pol}}_e:= \HH^{\operatorname{pol}} \rtimes\RR^*$
  with group operation
  \begin{equation}\label{eq:heisenberg_grouplaw}
   ( p,\tau,q,a)\circ_{\HH_e^{\operatorname{pol}}} (p',\tau',q',a')
  :=
    \bigl(
      p + \lvert a\rvert^{1-\gamma} p',
      \tau + a\tau' +  \sgn(a)\lvert a\rvert^\gamma p^\tT q',
      q + \sgn(a)\lvert a\rvert^\gamma q', aa'
    \bigr).
    \end{equation}
\end{itemize}
These groups are again isomorphic with
\begin{equation*}
  \phi_e\colon \HH_e \to \HH_e^{\operatorname{pol}},
\quad 
  (p,\tau,q,a) \mapsto (p,\tau + \tfrac{1}{2} p^\tT q,q,a).
\end{equation*}
Using only positive dilations we obtain the connected
versions of the extended (polarized) Heisenberg group, 
whose composition laws we do not write explicitly.
%
For $\gamma=\frac{1}{2}$ the dilation is symmetric in $p$ and $q$. 
Comparing the definition of $\SS$ and $\HH^{\operatorname{pol}}_e$ 
we see that both groups coincide up to a permutation of
the variables.
The same holds true for the connected group variants.
Taking further the isomorphism between the Heisenberg groups and its
polarized versions into account we can summarize:
\begin{lemma} {\rm (Relation between extended Heisenberg groups and shearlet groups) } \label{lem:HS}\\
The following relations hold true:
\begin{equation*}
  \HH_e \cong \HH_e^{\operatorname{pol}} = \SS
\qquad \text{and} \qquad
  \HH_e^+ \cong  \HH_e^{\operatorname{pol},+}  = \SS^+.
\end{equation*}
\end{lemma}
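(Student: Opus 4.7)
The plan is to split the lemma into two parts: (1) the abstract isomorphism $\phi_e\colon\HH_e\to\HH_e^{\operatorname{pol}}$ is a group homomorphism (it is manifestly a diffeomorphism of the underlying manifold), and (2) the polarized extended Heisenberg group $\HH_e^{\operatorname{pol}}$ agrees with the shearlet group $\SS$ up to a trivial permutation of the coordinate slots. The connected statements then follow immediately by restricting the dilation parameter $a$ to $\RR^+$, since the formulas for $\circ_{\HH_e^{\operatorname{pol}}}$ and $\circ_\SS$ involve $a$ only through $\lvert a\rvert^{1-\gamma}$, $\sgn(a)\lvert a\rvert^\gamma$, and $a$ itself, all of which simplify to $a$, $a^\gamma$, $a^{1-\gamma}$ on $\RR^+$.

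For part (2), I would simply unfold the definitions. Write an element of $\SS$ as $(a,s,t_1,\tilde t)$ using $t=(t_1,\tilde t)^\tT$ as in \eqref{eq:shearlet_grouplaw_modified}. Define the coordinate relabeling
\begin{equation*}
\Psi\colon\HH_e^{\operatorname{pol}}\to\SS,\qquad (p,\tau,q,a)\mapsto (a,p,\tau,q),
\end{equation*}
which is a bijection of sets. Substituting $p\leftrightarrow s$, $\tau\leftrightarrow t_1$, $q\leftrightarrow\tilde t$ into \eqref{eq:heisenberg_grouplaw} recovers exactly the four components of the shearlet composition law \eqref{eq:shearlet_grouplaw_modified}; no calculation is required beyond matching the four coordinates one at a time. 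Hence $\Psi$ is a group isomorphism and I may write $\HH_e^{\operatorname{pol}}=\SS$ with this identification in mind.

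For part (1), I would verify that $\phi_e$ respects multiplication by a direct computation. Let $(p,\tau,q,a),(p',\tau',q',a')\in\HH_e$, and denote the product in $\HH_e$ by $(P,T,Q,A)$, where $P=p+\lvert a\rvert^{1-\gamma}p'$, $Q=q+\sgn(a)\lvert a\rvert^\gamma q'$, $A=aa'$, and $T=\tau+a\tau'+\tfrac12(\sgn(a)\lvert a\rvert^\gamma p^\tT q'-\lvert a\rvert^{1-\gamma}q^\tT p')$. Then $\phi_e$ sends this product to $(P,T+\tfrac12 P^\tT Q,Q,A)$. On the other hand, applying $\phi_e$ to each factor and multiplying in $\HH_e^{\operatorname{pol}}$ produces the $\tau$-slot
\begin{equation*}
(\tau+\tfrac12 p^\tT q)+a(\tau'+\tfrac12 p'^\tT q')+\sgn(a)\lvert a\rvert^\gamma p^\tT q'.
\end{equation*}
Expanding $\tfrac12 P^\tT Q=\tfrac12(p+\lvert a\rvert^{1-\gamma}p')^\tT(q+\sgn(a)\lvert a\rvert^\gamma q')$ and using $aa'=aa'$ as well as $\sgn(a)\lvert a\rvert^\gamma\cdot\lvert a\rvert^{1-\gamma}=a$ shows that the two expressions agree; the symmetric term $\tfrac12(\sgn(a)\lvert a\rvert^\gamma p^\tT q'+\lvert a\rvert^{1-\gamma}q^\tT p')$ produced by $\tfrac12 P^\tT Q$ combines with the antisymmetric piece of $T$ to give precisely $\sgn(a)\lvert a\rvert^\gamma p^\tT q'$. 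The remaining three coordinate slots carry over unchanged, so $\phi_e$ is a homomorphism, and being bijective with smooth inverse it is a Lie group isomorphism.

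The main obstacle is thus just the bookkeeping in the $\tau$-slot: keeping track of which terms are symmetric versus antisymmetric in the pair $(p,q)\leftrightarrow(p',q')$, and verifying that the two dilation-dependent prefactors $\lvert a\rvert^{1-\gamma}$ and $\sgn(a)\lvert a\rvert^\gamma$ multiply to $a$, so that the polarization correction $\tfrac12 p^\tT q\mapsto\tfrac12 P^\tT Q$ absorbs both the $\tau'$-scaling by $a$ and the antisymmetric commutator in $\HH_e$. Once this identity is in place, parts (1) and (2) combine to yield the full statement, and the restriction $a\in\RR^+$ gives the connected counterpart verbatim.
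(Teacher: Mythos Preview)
Your proposal is correct and follows exactly the approach the paper outlines in the discussion preceding the lemma: the paper simply asserts that $\phi_e$ is an isomorphism and that $\HH_e^{\operatorname{pol}}$ and $\SS$ coincide up to a permutation of variables, then states the lemma without an explicit proof. Your write-up fills in precisely these two details (the bookkeeping in the $\tau$-slot and the coordinate relabeling $\Psi$), so there is nothing to add or change.
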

%
\subsection{Relation to subgroups of the symplectic group}\label{subsec13}
Let $\GL(d,\RR)$ denote the general linear group of real, invertible $d \times d$ matrices.
The \emph{symplectic group} $\Sp(d,\RR)$  is the group of all matrices $B \in \GL(2d,\RR)$ fulfilling 
$B^\tT J B = J$ for 
$
  J 
= 
  \left(\begin{smallmatrix} 0 & I_d \\ - I_d & 0 \end{smallmatrix}\right)
$, i.e.,
\begin{equation*}
  \Sp(d,\RR)
:=
  \{
    B\in\RR^{2d\times 2d}
  :
    B^\tT J B = J
  \}.
\end{equation*}
Let $H$ be a closed subgroup of $\GL(d,\RR)$ and $\Sigma$ an additive subspace of 
the symmetric matrices $\Sym(d,\RR)$ that is invariant under the $H$-action given by 
$M^{-\tT}\sigma M^{-1}\in\Sigma$ for all $M\in H$ and $\sigma\in\Sigma$. 
Then we know by \cite[Example 3]{DD11} that the 
  semi-direct product 
\begin{equation} \label{general_metaplectic}
   \Sigma\rtimes H
  :=
    \left\{
    \begin{pmatrix}
      M & 0 \\ \sigma M & M^{-\tT}
    \end{pmatrix}
    :
    M \in  H, \sigma\in\Sigma
    \right\}
\end{equation}
is a subgroup of $\Sp(d,\RR)$. 
We are interested in two special groups of the form \eqref{general_metaplectic}.
The first one is the group $\TDS(d)$ of 
translations, dilations and shears defined by
\begin{equation*}
    \TDS(d)
  :=
    \left\{
      \begin{pmatrix}
        M(s,a) & 0 \\ \sigma(t)M(s,a) & M(s,a)^{-\tT}
      \end{pmatrix}
      :
      a\in\RR^+, \, s\in\RR^{d-1}, \, t\in\RR^d
    \right\},
  \end{equation*}
where
\begin{equation} \label{tilde_a_s}
    \widetilde{A}_{a,\gamma}
  :=
    \begin{pmatrix} 
      a^{-\frac{1}{2}} & 0 \\ 
      0                & a^{\frac{1}{2}-\gamma}I_{d-1}
    \end{pmatrix}
    ,\quad
    \widetilde{S}_s
  :=
    \begin{pmatrix} 
       1 & 0 \\ 
      -s & I_{d-1} 
    \end{pmatrix}
  ,\quad
    M(s,a) 
  := 
    \widetilde{S}_s\widetilde{A}_{a,\gamma},
\end{equation}
and  $\sigma(t)$ belongs to the subspace of the symmetric matrices
\begin{equation} \label{def:spec_sigma}
  \{ 
     \sigma(t) 
  = 
    \sigma(t_1,\tilde{t})
  :=
    \begin{pmatrix}
      t_1                  & \frac{1}{2}\tilde{t}^\tT \\ 
      \frac{1}{2}\tilde{t} & 0_{d-1,d-1}
    \end{pmatrix}: t \in \RR^d
  \}.
\end{equation}
Straightforward computation shows that the subspace \eqref{def:spec_sigma} 
is indeed invariant under $H$-action with matrices $M(s,a)$.
 
The relation between  this subgroup $\TDS(d)$ of the symplectic group and the connected shearlet group $\SS^{+}$ is 
stated in the following lemma. 
For a proof  we refer to \cite{Hae14,DD11}.
%
\begin{lemma} {\rm (Relation between $\SS^+$ and  $\TDS(d)$)} \label{lemma:isomorphism_Splus_TDS}\\
 The groups $\SS^{+}$ and $\TDS(d)$ are isomorphic and the isomorphism $\kappa^{+}$ is given by 
\begin{equation} \label{eq:definition_g_+} 
\kappa^{+} \colon \SS^{+} \rightarrow \TDS(d),\qquad
  (a,s,t_1,\tilde{t})
\mapsto
  \begin{pmatrix}
    M(s,a) & 0 \\
    \sigma(t_1,\tilde{t})M(s,a) & M(s,a)^{-\tT }
  \end{pmatrix}.
\end{equation}
\end{lemma}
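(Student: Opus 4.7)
The plan is to verify that $\kappa^{+}$ is a bijection onto $\TDS(d)$ and that it intertwines the two group laws. Bijectivity is almost immediate. By the very definition of $\TDS(d)$, every element has the block form on the right-hand side of \eqref{eq:definition_g_+}, so $\kappa^{+}$ is surjective; injectivity follows because from such a matrix one can successively recover $a > 0$ from the $(1,1)$-entry $a^{-1/2}$ of $M(s,a)$, then $s$ from the first column of $M(s,a)$, and finally the translation $t = (t_1,\tilde{t})$ from the distinguished entries of $\sigma(t)$ in \eqref{def:spec_sigma}.

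The heart of the proof is the homomorphism property. Multiplying two block matrices of the shape $\left(\begin{smallmatrix} M & 0 \\ \sigma M & M^{-\tT} \end{smallmatrix}\right)$ yields
\begin{equation*}
  \kappa^{+}(a,s,t)\,\kappa^{+}(a',s',t')
=
  \begin{pmatrix}
    M(s,a)M(s',a') & 0 \\
    \bigl(\sigma(t) + M(s,a)^{-\tT}\sigma(t')M(s,a)^{-1}\bigr)M(s,a)M(s',a') & \bigl(M(s,a)M(s',a')\bigr)^{-\tT}
  \end{pmatrix}.
\end{equation*}
Comparing with $\kappa^{+}$ applied to the shearlet product \eqref{eq:shearlet_grouplaw_modified} (restricted to $a>0$), it suffices to establish the two identities $M(s,a)M(s',a') = M(s + a^{1-\gamma}s',\, aa')$ and $\sigma(t) + M(s,a)^{-\tT}\sigma(t')M(s,a)^{-1} = \sigma(t + S_s A_{a,\gamma}t')$.

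For the first identity, the key commutation $\widetilde{A}_{a,\gamma}\widetilde{S}_{s'} = \widetilde{S}_{a^{1-\gamma}s'}\widetilde{A}_{a,\gamma}$ is verified by a direct block multiplication; then $\widetilde{S}_s\widetilde{S}_{a^{1-\gamma}s'} = \widetilde{S}_{s+a^{1-\gamma}s'}$ and $\widetilde{A}_{a,\gamma}\widetilde{A}_{a',\gamma} = \widetilde{A}_{aa',\gamma}$ close the step. For the second identity, I would first observe that $\widetilde{S}_s^{-1} = S_s^{\tT}$, which yields the factorization $M(s,a)^{-\tT} = S_s \widetilde{A}_{a,\gamma}^{-1}$ and $M(s,a)^{-1} = \widetilde{A}_{a,\gamma}^{-1} S_s^{\tT}$. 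The conjugation $\sigma(t') \mapsto M(s,a)^{-\tT}\sigma(t')M(s,a)^{-1}$ therefore splits into two simpler conjugations: a short block computation shows that $\widetilde{A}_{a,\gamma}^{-1}\sigma(t')\widetilde{A}_{a,\gamma}^{-1} = \sigma(A_{a,\gamma} t')$ for $a>0$, and a second one shows $S_s\sigma(u)S_s^{\tT} = \sigma(S_s u)$; both maps preserve the subspace \eqref{def:spec_sigma}. Chaining them and using additivity $\sigma(u)+\sigma(v)=\sigma(u+v)$ gives the required identity.

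The main obstacle is the second identity. The specific half-factors in the off-diagonal entries of $\sigma(t)$ in \eqref{def:spec_sigma} are precisely what make the conjugation action of $M(s,a)^{-\tT}$ and $M(s,a)^{-1}$ translate, on the parameter side, into the clean linear map $t' \mapsto S_s A_{a,\gamma}t'$ that appears in the shearlet group law \eqref{eq:shearlet_grouplaw_modified}; any other scaling in \eqref{def:spec_sigma} would introduce spurious cross-terms. Verifying this matching is elementary but careful block algebra, and it is the single computation that makes the identification work.
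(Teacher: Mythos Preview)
Your proposal is correct. The paper does not actually prove this lemma in the text---it defers to the references \cite{Hae14,DD11}---but your direct verification is the natural argument and matches the strategy the paper does carry out for the companion Toeplitz result (Lemma~\ref{lemma:isomorphism_Splus_TDS_toep}): multiply out the block matrices, then reduce the homomorphism property to the identity $\sigma(t)+M^{-\tT}\sigma(t')M^{-1}=\sigma(t+S_sA_{a,\gamma}t')$ and check it by explicit computation.
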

%
The second interesting group of the form \eqref{general_metaplectic}  is
the group 
of translations, dilations and Toeplitz shears  given by 
\begin{equation*}
  \TDS_T(d) 
= 
  \left\{
    \begin{pmatrix}
      a^{-1/2} T_s^{-\tT}                        & 0           \\
      a^{-1/2}\sigma(t) T_s^{-\tT} & a^{1/2} T_s
    \end{pmatrix}
  :
    a\in\RR^+, s\in\RR^{d-1}, t \in\RR^d
  \right\}.
\end{equation*}
Clearly, the subspace \eqref{def:spec_sigma} is invariant under the $H$-action with matrices $a^{-1/2} T_s^{-1}$.
This group is related to the connected Toeplitz shearlet group as follows:
%
\begin{lemma} {\rm (Relation between $\SS^+_T$ and  $\TDS_T(d)$)} \label{lemma:isomorphism_Splus_TDS_toep}\\
 The groups $\SS^{+}_T$ and $\TDS_T(d)$ are isomorphic and the isomorphism $\kappa_T^{+}$ is given by 
\begin{equation} \label{eq:definition_g_+_T} 
\kappa_{T}^{+}\colon\SS_T^+\to \TDS_T(d),  \quad
  (a,s,t_1,\tilde{t})
\mapsto
  \begin{pmatrix}
    a^{-1/2} T_s^{-\tT}                        & 0           \\
    a^{-1/2}\sigma(t_1,\tilde{t}) T_s^{-\tT} & a^{1/2} T_s
  \end{pmatrix}.
\end{equation}
\end{lemma}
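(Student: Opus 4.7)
The plan is to verify that $\kappa_T^+$ is a bijection onto $\TDS_T(d)$ that intertwines the two group laws. Surjectivity is built into the definition of $\TDS_T(d)$, and injectivity is immediate: from a matrix of the form in \eqref{eq:definition_g_+_T} the scalar $a$ can be recovered from the diagonal of the upper-left block, then $T_s$ and hence $s$, and finally $t$ from the entries of $\sigma(t_1,\tilde{t})$. The entire substance of the proof is therefore the homomorphism property.

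For this, I would first carry out the general block-matrix product in the setting of \eqref{general_metaplectic}:
\begin{equation*}
\begin{pmatrix} M & 0\\ \sigma M & M^{-\tT}\end{pmatrix}\begin{pmatrix} M' & 0\\ \sigma' M' & M'^{-\tT}\end{pmatrix} = \begin{pmatrix} MM' & 0\\ (\sigma + M^{-\tT}\sigma' M^{-1})MM' & (MM')^{-\tT}\end{pmatrix},
\end{equation*}
so the product is encoded by the pair $(MM',\, \sigma + M^{-\tT}\sigma' M^{-1})$. Substituting $M = a^{-1/2}T_s^{-\tT}$ and $M' = a'^{-1/2}T_{s'}^{-\tT}$, the upper-left block becomes $(aa')^{-1/2}(T_sT_{s'})^{-\tT} = (aa')^{-1/2}T_{s\msharp s'}^{-\tT}$, using the identity $T_sT_{s'} = T_{s\msharp s'}$ recorded in Section~\ref{subsec11}. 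This matches the first two components of the product law \eqref{law_toeplitz}.

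Matching the third component, after inserting $M^{-\tT} = a^{1/2}T_s$ and $M^{-1} = a^{1/2}T_s^{\tT}$ and using linearity of $\sigma$, reduces to the single identity
\begin{equation*}
T_s\,\sigma(u)\,T_s^{\tT} = \sigma(T_s u), \qquad u \in \RR^d.
\end{equation*}
This is the only step that goes beyond bookkeeping, and I regard it as the main obstacle. I would establish it by exploiting the factorization $\sigma(u) = \tfrac12\bigl(u e_1^{\tT} + e_1 u^{\tT}\bigr)$, where $e_1$ denotes the first canonical basis vector of $\RR^d$, combined with the fact that $T_s$ is upper-triangular with ones on the diagonal, so that $T_s e_1 = e_1$. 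A one-line computation then yields
\begin{equation*}
T_s\sigma(u)T_s^{\tT} = \tfrac12\bigl((T_s u)(T_s e_1)^{\tT} + (T_s e_1)(T_s u)^{\tT}\bigr) = \tfrac12\bigl((T_s u)e_1^{\tT} + e_1(T_s u)^{\tT}\bigr) = \sigma(T_s u),
\end{equation*}
completing the homomorphism check and hence the lemma. The key conceptual input is precisely that $e_1$ is fixed by every upper-triangular Toeplitz matrix of the form $T_s$; everything else in the proof is either formal block-matrix manipulation or is already provided by the group laws stated in Section~\ref{subsec11}.
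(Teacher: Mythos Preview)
Your proof is correct and follows the same overall structure as the paper's: reduce the homomorphism property to the conjugation identity $T_s\,\sigma(u)\,T_s^{\tT}=\sigma(T_s u)$ via block-matrix multiplication, then verify that identity. The paper establishes this identity (its equation \eqref{eq:toeplitz_conjugation}) by explicit entrywise multiplication of the partitioned matrices, whereas you use the factorization $\sigma(u)=\tfrac12(ue_1^{\tT}+e_1u^{\tT})$ together with $T_se_1=e_1$; your argument is shorter and makes transparent \emph{why} the identity holds (invariance of $e_1$ under upper-triangular unipotent matrices), while the paper's computation has the advantage of displaying the coordinates $(t_1'+s^{\tT}\tilde t',\,T_{[s]}\tilde t')$ of $T_s t'$ explicitly.
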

%
\begin{proof}
From the group law in the Toeplitz shearlet group we know that 
\begin{equation*}
  (a,s,t) \circ_{\SS^{+}_T} (a',s',t') 
= 
  (aa', s\msharp s', t + a T_{s}  t'),
\end{equation*}
so that we have to show 
\begin{equation*}
  \kappa_T^{+}(a,s,t) \circ \kappa_T^{+}(a',s',t')
= 
  \kappa_T^{+}(aa', s\msharp s', t + a T_{s}  t').
\end{equation*}
The right hand side can be rewritten as
\begin{equation*}
  \kappa_T^{+}(aa', s\msharp s', t + a T_{s}  t')
=
  \begin{pmatrix}
    (aa')^{-\frac{1}{2}} T_{s\msharp s'}^{-\tT} & 0 \\
    (aa')^{-\frac{1}{2}} 
    \sigma\bigl(t_1 + a (t_1' + s^\tT \tilde{t}'),\tilde{t} + a T_{[s]}\tilde{t}'\bigr)
    T_{s\msharp s'}^{-\tT} 
        & (aa')^\frac{1}{2}T_{s\msharp s'}
  \end{pmatrix}
\end{equation*}
where $[s] = (s_i)_{i=1}^{d-2}\in\RR^{d-2}$. 
For the left hand side we have
\begin{align*}
&
  \begin{pmatrix}
    a^{-1/2} T_s^{-\tT}                        & 0           \\
    a^{-1/2}\sigma(t_1,\tilde{t}) T_s^{-\tT} & a^{1/2} T_s
  \end{pmatrix}
  \begin{pmatrix}
    (a')^{-1/2} T_{s'}^{-\tT}                        & 0           \\
    (a')^{-1/2}\sigma(t_1',\tilde{t}') T_{s'}^{-\tT} & (a')^{1/2} T_{s'}
  \end{pmatrix}
\\
&=
  \begin{pmatrix}
    (aa')^{-1/2} T_{s\msharp s'}^{-\tT}                        & 0           \\
    (aa')^{-1/2} \big(\sigma(t_1,\tilde{t}) + a T_s \sigma(t_1',\tilde{t}') T_s^\tT \big) T_{s\msharp s'}^{-\tT} & (aa')^{1/2} T_{s\msharp s'}
  \end{pmatrix}
\end{align*}
consequently, it is sufficient to show 
\begin{equation}\label{eq:toeplitz_conjugation}
    \begin{pmatrix}
    t_1' + s^\tT\tilde{t'}                      & \frac{1}{2} (T_{[ s]} \tilde{t}')^\tT \\
    \frac{1}{2} T_{[ s ]}\tilde{t}'  & 0
  \end{pmatrix}
=
  T_s
  \begin{pmatrix}
    t_1'                      & \frac{1}{2} (\tilde{t}')^\tT \\
    \frac{1}{2}\tilde{t}' & 0
  \end{pmatrix}
  T_s^\tT.
\end{equation}
The right hand side of \eqref{eq:toeplitz_conjugation} is
\begin{align*}
&
  \begin{pmatrix}
    1 & s_1    & \cdots & s_{d-1} \\
      & \ddots &        & \vdots  \\
      &        & \ddots & s_1     \\
      &        &        & 1
  \end{pmatrix}
  \begin{pmatrix}
    t_1'                   & \frac{1}{2} (\tilde{t}')^\tT \\
    \frac{1}{2}\tilde{t}' & 0
  \end{pmatrix}
  \begin{pmatrix}
    1       &        &        &  \\
    s_1     & \ddots &        &  \\
    \vdots  &        & \ddots &  \\
    s_{d-1} & \cdots & s_1    & 1
  \end{pmatrix}
\\
&=
  \begin{pmatrix}
    t_1' + \frac{1}{2}s^\tT\tilde{t}' & \frac{1}{2} (\tilde{t}')^\tT \\
    \frac{1}{2} T_{[ s ]}\tilde{t}'   & 0
  \end{pmatrix}
  \begin{pmatrix}
    1       &        &        &      \\
    s_1     & \ddots &        &      \\
    \vdots  &        & \ddots &      \\
    s_{d-1} & \cdots & s_1       & 1
  \end{pmatrix}
=
  \begin{pmatrix}
    t_1' + \frac{1}{2}s^\tT\tilde{t}' + \frac{1}{2}s^\tT\tilde{t}'        & \frac{1}{2}(T_{[ s ]}\tilde{t}')^\tT \\
    \frac{1}{2} T_{[ s ]}\tilde{t}' & 0
  \end{pmatrix}
\end{align*}
which coincides with the left hand side of \eqref{eq:toeplitz_conjugation} and we are done.
\end{proof}
%
\section{Embedding of the full shearlet group} \label{sec3}
%
In the following we want to prove that it is not possible 
to embed the full shearlet group into the symplectic group
$\Sp(2,\RR)$ or one of its covers.
To show this result we pursuit the following path:
a) establish a necessary property for those continuous, injective group homomorphisms of $\SS^+$ to $\Sp(d,\RR)$
which can be extended to $\SS$;
b) show that this property is not fulfilled by the special homomorphism $\kappa^+$ defined in \eqref{eq:definition_g_+}
nor by its conjugation or concatenations with isomorphisms of $\SS^+$;
c) prove that in dimension $d=2$ actually any continuous, injective group homomorphism is given
up to conjugation or concatenations with isomorphisms of $\SS^+$ by the map $\kappa^+$.

In the following $\SS$ is regarded as the semi-direct
product of its closed normal subgroup $\SS^+$ and its finite subgroup 
$\{(\pm 1,0,0,0)\}\simeq \ZZ_2$. 
Indeed, 
\begin{equation*}
  (-1,0,0,0) 
\circ_{\SS}
  (a,s,t_1,\tilde{t}) 
\circ_{\SS}
  (-1,0,0,0) 
= 
  (a,s,-t_1,-\tilde{t})
\end{equation*}
and, clearly,
\begin{equation*}
  \SS^+\circ_{\SS}\ZZ_2
=
  \SS 
\quad\text{and}\quad
  \SS^+\cap \ZZ_2
=
  \{(1,0,0,0)\}.
\end{equation*}
With slight abuse of notation, we write an element of $\SS$ as a pair $(x,\varepsilon)$ where
$x = (a,s,t_1,\tilde{t})\in\SS^{+}$ and $\varepsilon \in \ZZ_2$.
The group operation in $\SS$ becomes
\begin{equation*}
  (x,\varepsilon)\circ_{\SS} (x',\varepsilon')
=
  (x \circ_\SS R_\varepsilon x',\varepsilon\varepsilon'),
\end{equation*}
where $R_\eps$ is the group isomorphism of $\SS^+$ given by
\begin{equation}\label{eq:2}
  R_\varepsilon x 
= 
  R_\varepsilon (a,s,t_1,\tilde{t}) 
= 
  (a,s,\varepsilon t_1, \varepsilon \tilde{t}).
\end{equation}
Let $e := (1,0,0,0)$ denote the identity of $\SS^+$.
Then, in particular,
\begin{equation}
  \label{eq:5}
  (e,-1)\circ_{\SS} (x,1) 
= 
  (R_{-1} x,1) \circ_{\SS} (e,-1) 
\end{equation}
and $(e,1)$ is the identity of $\SS$.
%
\begin{lemma}\label{lemma:injective_extension}
  An injective group homomorphism $g^+\colon\SS^{+} \to \Sp(d,\RR)$ extends to a group homomorphism 
  $g \colon \SS\to\Sp(d,\RR)$ if and only if there exists $A\in\Sp(d,\RR)$ such that
  \begin{gather}\label{eq:Aconjugation}
    A^2 =  I_{2d}
  \quad \text{and} \quad
     A g^+(x) = g^+(R_{-1}x) A
  \end{gather}
	for all $x\in\SS^{+}$.
  Under these assumptions we have for all $x\in\SS^{+}$ that
  \begin{equation} \label{eigs}
    g(x,1)
  =
    g^+(x)
  \quad\text{and}\quad
   g(x,-1)
  =
    g^+(x)A.
  \end{equation}
  The extended group homomorphism $g$ is injective.
\end{lemma}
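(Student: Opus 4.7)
The plan is to prove the two directions of the equivalence in turn, then extract the explicit formulas \eqref{eigs} and the injectivity of $g$.

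For the \emph{only if} direction, I would assume $g$ extends $g^{+}$ and set $A := g(e,-1)$. Since $(e,-1)\circ_{\SS}(e,-1) = (e,1)$ is the identity of $\SS$, applying $g$ gives $A^{2} = I_{2d}$. Applying $g$ to both sides of \eqref{eq:5} yields $A\,g^{+}(x) = g^{+}(R_{-1}x)\,A$, which is the second condition of \eqref{eq:Aconjugation}. Moreover, any such extension must satisfy $g(x,-1) = g\bigl((x,1)\circ_{\SS}(e,-1)\bigr) = g^{+}(x)\,A$, so \eqref{eigs} is forced and $g$ is uniquely determined by the choice of $A$.

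For the \emph{if} direction, I would define $g$ by \eqref{eigs} and verify the homomorphism property by splitting into four cases according to $\varepsilon,\varepsilon' \in \{\pm 1\}$. Using the semidirect product law $(x,\varepsilon)\circ_{\SS}(x',\varepsilon') = (x\circ_{\SS}R_{\varepsilon}x',\varepsilon\varepsilon')$, the two cases with $\varepsilon = 1$ reduce immediately to the homomorphism property of $g^{+}$ together with the definition of $g$; the two cases with $\varepsilon = -1$ require moving $A$ past $g^{+}(x')$ via the conjugation identity $A\,g^{+}(x') = g^{+}(R_{-1}x')\,A$, and the case $\varepsilon = \varepsilon' = -1$ additionally uses $A^{2} = I_{2d}$ to cancel the two copies of $A$.

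For injectivity, I would first observe that $A \neq I_{2d}$, since otherwise the conjugation relation together with injectivity of $g^{+}$ would force $x = R_{-1}x$ for every $x \in \SS^{+}$, which fails. Injectivity of $g$ within each ``slice'' $\SS^{+} \times \{\varepsilon\}$ is then immediate from the injectivity of $g^{+}$. For cross-slice injectivity, suppose $g^{+}(x) = g^{+}(y)\,A$; then $A = g^{+}(y^{-1} \circ_{\SS} x)$ lies in the image of $g^{+}$, and squaring gives $g^{+}\bigl((y^{-1} \circ_{\SS} x)^{2}\bigr) = A^{2} = I_{2d} = g^{+}(e)$. A short direct computation using \eqref{eq:shearlet_grouplaw_modified} shows that $\SS^{+}$ is torsion-free, so $y^{-1} \circ_{\SS} x = e$, whence $A = I_{2d}$, contradicting $A \neq I_{2d}$.

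The main obstacle is the four-case check of the homomorphism property, which must be organized carefully so that the conjugation identity is applied in the correct slot; beyond that, the torsion-freeness of $\SS^{+}$ and the uniqueness of the extension are essentially one-line observations.
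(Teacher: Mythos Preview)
Your proof is correct and follows essentially the same route as the paper's: both set $A := g(e,-1)$ for the forward direction, define $g$ by \eqref{eigs} and verify the homomorphism property by case analysis for the converse, and prove injectivity by reducing cross-slice collisions to an element of order two in $\SS^{+}$ and invoking torsion-freeness. Your write-up is in fact slightly more explicit than the paper's in two places (the argument that $A \neq I_{2d}$ and the reason why $y^{2}=e$ forces $y=e$), but there is no substantive difference in strategy.
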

%
Note that by injectivity of $g^+$ we have $A \not = I$ for any $A$ fulfilling \eqref{eq:Aconjugation}.
\begin{proof}
$\Rightarrow$:
  Assume that $g^+\colon\SS^{+} \to \Sp(d,\RR)$ extends to a homomorphism $g\colon\SS \to \Sp(d,\RR)$.
	Then we have in particular $g(x,1) = g^+(x)$ for all $x \in \SS^+$.
	We show that $A := g(e,-1)$ fulfills \eqref{eq:Aconjugation}.
	Since 
	$(e,-1) \circ_{\SS} (e,-1) = (e \circ_{\SS^+} R_{-1} e,1) = (e,1)$
	and $g$ is a homomorphism we obtain $A^2 = I_{2d}$.
	By 
	\begin{equation*} 
	(e,-1) \circ_{\SS} (x,1) = (e \circ_{\SS^+} R_{-1} x, -1) = (R_{-1} x,1) \circ_{\SS} (e,-1)
	\end{equation*}
	and the fact that $g$ is a homomorphism we get the second equality in \eqref{eq:Aconjugation}.
	Finally, we conclude since 
	$(x,-1) \circ_{\SS} (e,-1) = (x \circ_{\SS^+} R_{-1} e,1) = (x,1)$,
	$g$ is a homomorphism and $A^2 = I_{2d}$ that $g(x,-1) = g^+(x) A$.
		\\	
$\Leftarrow$:
  Conversely, assume that there exists $A \in \Sp(d,\RR)$ satisfying \eqref{eq:Aconjugation}. Set 
  \begin{equation*}
    g(x,\varepsilon)
  :=
    \begin{cases}
      g^+(x)  &\text{ for }\varepsilon = 1, \\
      g^+(x)A &\text{ for }\varepsilon = -1.
    \end{cases}
  \end{equation*}
	Then \eqref{eigs} is fulfilled by definition.
  Direct computation shows that $g$ is a group homomorphism from $\SS$ into $\Sp(d,\RR)$. 
	\\
  It remains to prove the injectivity of $g$.
  By \eqref{eigs} and the invertibility of $A$ we see that
  $g(x,\varepsilon) = g(x',\varepsilon)$ implies $g^+(x) = g^+(x')$ and since $g^+$ is injective 
  further $x=x'$. 
  If
  $g(x',1) = g(x,-1)$
  we obtain
  $g^+(x') = g^+(x)A$
  and since $g^+$ is a homomorphism that
  $g^+(x^{-1}x') = A$.
  Set $y := x^{-1}x'\in\SS^{+}$. 
  Since $g^+$ is a homomorphism we conclude $g^+(y^2) = (g^+(y))^2 = A^2 = I_{2d}$ and
  with the injectivity of $g^+$ that $y^2 = e$. 
  But this is only possible if  $y = e$ and consequently $g^+(y) = I_{2d} = A$
  which is a contradiction. Hence $g$ is injective.
\end{proof}
We recall that a covering group of $\Sp(d,\RR)$ is
a connected Lie group $G$ with a surjective 
continuous group homomorphism $p\colon G\to\Sp(d,\RR)$
whose kernel is discrete.
\begin{lemma} \label{lemma:covering}
  Let $(G,p)$ be a covering group of $\Sp(d,\RR)$ with covering homomorphism $p$ 
  and an injective 
  continuous group homomorphism $i\colon\SS\to G$. 
  Then $p\circ i$ is an injective group homomorphism of $\SS$ into $\Sp(d,\RR)$.
\end{lemma}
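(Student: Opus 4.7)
The plan is to show that the kernel $K := \ker(p\circ i) = i^{-1}(\ker p)$ of the composition is trivial; $p\circ i$ is a continuous group homomorphism by composition, so injectivity reduces to $K = \{(e,1)\}$. I will first establish that $K$ is discrete in $\SS$, then eliminate elements lying in the identity component $\SS^+$ by a centrality argument, and finally eliminate elements in $\SS^+\times\{-1\}$ by combining normality with discreteness.

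\emph{Discreteness of $K$.} Since $\ker p$ is discrete in $G$, there is an open neighborhood $V$ of $e_G$ with $V\cap\ker p=\{e_G\}$. Then $i^{-1}(V)$ is open in $\SS$, and by continuity and injectivity of $i$ we get $i^{-1}(V)\cap K = \{(e,1)\}$. Hence $(e,1)$ is isolated in $K$ and, by left translation, $K$ is discrete in $\SS$. Being the kernel of a homomorphism, $K$ is also normal in $\SS$.

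\emph{Part in $\SS^+$.} Because $\SS^+$ is the connected identity component of $\SS$, the intersection $K\cap\SS^+$ is a discrete normal subgroup of the connected Lie group $\SS^+$ and is therefore central. A short direct computation with the law \eqref{eq:shearlet_grouplaw_modified} restricted to $a>0$ shows the center of $\SS^+$ is trivial: requiring $(a,s,t_1,\tilde t)$ to commute with $(a',0,0,0)$ for all $a'>0$ forces $s=0$, $t_1=0$, $\tilde t=0$, and then commuting with $(1,s',0,0)$ forces $a=1$. Thus $K\cap\SS^+=\{(e,1)\}$, so the projection $K\to\ZZ_2$ is injective and $|K|\le 2$.

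\emph{Part outside $\SS^+$.} Suppose there were a nontrivial $k=(x,-1)\in K$. Squaring via the semidirect product rule gives $k^2=(x\circ_{\SS^+}R_{-1}x,\,1)\in K\cap\SS^+=\{(e,1)\}$, and solving $x\circ_{\SS^+}R_{-1}x=e$ in $\SS^+$ forces $x=(1,0,t_1,\tilde t)$ for some $(t_1,\tilde t)$. The map $y\mapsto(y,1)\circ_\SS k\circ_\SS(y,1)^{-1}$ is continuous from the connected $\SS^+$ into the discrete $K$, hence constant equal to $k$. A direct computation for $y=(a,s,u_1,\tilde u)$ produces the element $((1,0,\,2u_1+at_1+a^\gamma s^\tT\tilde t,\,2\tilde u+a^\gamma\tilde t),-1)$, whose third coordinate varies with $u_1$; constancy in $y$ is thus impossible. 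This contradiction forces $K=\{(e,1)\}$, completing the proof.

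\emph{Main obstacle.} The disconnectedness of $\SS$ is the real obstacle: the standard theorem that a discrete normal subgroup of a connected Lie group is central only rules out elements in $\SS^+$. Handling the component $\SS^+\times\{-1\}$ requires the separate orbit argument above, in which discreteness of $K$ combined with connectedness of the conjugating subgroup $\SS^+$ forces conjugation to fix any hypothetical $k$, which is incompatible with the explicit form of the group law.
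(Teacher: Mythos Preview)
Your proof is correct and takes a genuinely different route from the paper's. The paper argues that $(p\circ i)|_{\SS^+}$ is injective by showing its tangent map $(p\circ i)_*|_e = p_*|_{e_G}\, i_*|_e$ is injective and invoking that $\SS^+$ is connected and simply connected; it then applies Lemma~\ref{lemma:injective_extension} with $g^+ := (p\circ i)|_{\SS^+}$, whose final clause already states that any extension of an injective $g^+$ to all of $\SS$ is itself injective. You instead work entirely at the group level: discreteness of $K$, centrality of $K\cap\SS^+$ in the connected group $\SS^+$, an explicit verification that $Z(\SS^+)=\{e\}$, and finally a conjugation--constancy argument to exclude elements in the non-identity component. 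Your route is more self-contained (it does not call on Lemma~\ref{lemma:injective_extension}) and makes the centrality step explicit---injectivity of the differential alone only yields a \emph{discrete} kernel, and triviality then rests precisely on the center computation you supply. The paper's route is shorter because it recycles Lemma~\ref{lemma:injective_extension} to handle the $\ZZ_2$-component in one stroke.
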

\begin{proof}
By definition it is clear that $p\circ i$ is a homomorphism.
Next we have that $p\circ i$ restricted to $\SS^{+}$ is injective by the following argument.
Recall that  a continuous group homomorphism of a Lie group is always smooth.
Since $\SS^{+}$ is a connected, simply connected Lie group it is enough to prove that its tangent map at the identity $(p\circ i)_*\big\vert_e$ is injective. Observe that 
$
  (p\circ i)_*\big\vert_e
=
  p_*\big\vert_{e_G}\,
  i_*\big\vert_e.
$
Since $i$ is injective, the same holds true for $i_*\big\vert_e$ 
and since $p$ is a covering homomorphism $p_*\big\vert_{e_G}$ is injective. Thus their concatenation is injective.
Now applying Lemma~\ref{lemma:injective_extension} with $g^+:= p\circ i|_{\SS^+}$ yields the assertion.
\end{proof}
We have shown that a special injective homomorphism from $\SS^+$ into $\Sp(2,\RR)$ is given by 
$g^+:= \kappa^+$ defined in \eqref{eq:definition_g_+}.
%
\begin{lemma}\label{lemma:no_A_for_Filippo_map}
  For $\kappa^+\colon \SS^+ \rightarrow \Sp(d,\RR)$ defined by \eqref{eq:definition_g_+} 
  there does not exist $A \in \Sp(d,\RR)$ satisfying \eqref{eq:Aconjugation}.
  The same holds true for 
  \begin{enumerate}
	\item[\textrm{(i)}]
      any conjugation of $\kappa^+$, i.e., for any map
      $\kappa^+_B\colon \SS^+\to\Sp(d,\RR)$ with
      $
        \kappa^+_B(x) 
      := 
        B \, \kappa^{+}(x) B^{-1}
      $,
      $B\in\Sp(d,\RR)$,
    \item[\textrm{(ii)}] 
      any map $\kappa^+_\varphi\colon \SS^+\to\Sp(d,\RR)$ of the form 
      $\kappa^+_\varphi(x) := \kappa^{+}(\varphi(x))$, 
      where $\varphi$ is a group automorphism of $\ \SS^+$ such that
      \begin{equation} \label{isoprop}
        \varphi(R_{-1} x)
      =
        R_{-1}\varphi(x)
      \end{equation}
      for all $x\in \SS^+$.
  \end{enumerate}
\end{lemma}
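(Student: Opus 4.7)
The plan is to suppose that such an $A\in\Sp(d,\RR)$ exists and derive a contradiction by specialising the identity $A\kappa^+(x)=\kappa^+(R_{-1}x)A$ at carefully chosen elements of $\SS^{+}$, thereby forcing $A$ to have a very rigid block structure that is incompatible with the symplectic condition $A^\tT JA=J$. Write $A=\left(\begin{smallmatrix} A_{11}&A_{12}\\A_{21}&A_{22}\end{smallmatrix}\right)$ in $d\times d$ blocks and recall that $R_{-1}(a,s,t_1,\tilde t)=(a,s,-t_1,-\tilde t)$, so it acts trivially on the subgroup $\{(a,s,0,0)\}$ and as a sign change on the subgroup $\{(1,0,t_1,\tilde t)\}$. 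This dichotomy is what we exploit.

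First I would specialise to $x=(a,s,0,0)$, where $\kappa^{+}(x)=\mathrm{diag}(M(s,a),M(s,a)^{-\tT})$, so that \eqref{eq:Aconjugation} becomes the statement that $A$ commutes with this block-diagonal subgroup. Reading off the four block equations yields $A_{11}M=MA_{11}$ and $A_{22}M^{-\tT}=M^{-\tT}A_{22}$, together with the twisted relations $A_{12}M^{-\tT}=MA_{12}$ and $A_{21}M=M^{-\tT}A_{21}$. Taking $M=\widetilde{A}_{a,\gamma}$ diagonal (with two distinct eigenvalues for generic $a$) constrains $A_{11}$ and $A_{22}$ to be block-diagonal of the form $\mathrm{diag}(\cdot,\cdot\,I_{d-1})$, and the subsequent commutation with $\widetilde{S}_s$ for all $s\in\RR^{d-1}$ forces $A_{11}=\alpha I_d$ and $A_{22}=\beta I_d$ for some $\alpha,\beta\in\RR$. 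The same eigenvalue/shear bookkeeping applied to $A_{12}$ and $A_{21}$ drives these off-diagonal blocks to zero. (In the exceptional case $\gamma=\tfrac12$ a lower-right block survives the diagonal test, but it is killed at the next step by the translation constraint below.)

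Second I would specialise to $x=(1,0,t_1,\tilde t)$, for which $\kappa^{+}(x)=\left(\begin{smallmatrix} I&0\\ \sigma(t)&I\end{smallmatrix}\right)$ and $\kappa^{+}(R_{-1}x)=\left(\begin{smallmatrix} I&0\\-\sigma(t)&I\end{smallmatrix}\right)$. Comparing the block entries of $A\kappa^{+}(x)$ and $\kappa^{+}(R_{-1}x)A$ gives $A_{12}\sigma(t)=0=\sigma(t)A_{12}$ for all admissible $\sigma(t)$ (confirming $A_{12}=0$ even when $\gamma=\tfrac12$), and the crucial identity $A_{22}\sigma(t)+\sigma(t)A_{11}=0$, which together with $A_{11}=\alpha I$, $A_{22}=\beta I$ yields $\beta=-\alpha$. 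Hence $A=\mathrm{diag}(\alpha I_d,-\alpha I_d)$. Then $A^2=\alpha^2 I_{2d}=I_{2d}$ forces $\alpha^2=1$, while a direct computation of $A^\tT JA$ gives $\alpha^2(-I)=I$ in the upper-right block, i.e. $\alpha^2=-1$. This is the contradiction that closes the main case.

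For part (i), if $A_B$ were to satisfy \eqref{eq:Aconjugation} for $\kappa^{+}_B=B\kappa^{+}(\cdot)B^{-1}$, then $A:=B^{-1}A_BB$ lies in $\Sp(d,\RR)$, squares to the identity, and satisfies $A\kappa^{+}(x)=\kappa^{+}(R_{-1}x)A$, contradicting what we just proved. For part (ii), if $A_\varphi$ worked for $\kappa^{+}_\varphi=\kappa^{+}\circ\varphi$, the hypothesis \eqref{isoprop} gives $A_\varphi\kappa^{+}(\varphi(x))=\kappa^{+}(R_{-1}\varphi(x))A_\varphi$, and since $\varphi$ is surjective on $\SS^+$, we obtain the same impossible condition on $A_\varphi$ itself. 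The main obstacle in the whole argument is the first step: disentangling the combined constraints of dilations, shears, and the twist $M^{-\tT}$ to pin down all four blocks of $A$, for which the careful use of the distinct eigenvalues of $\widetilde{A}_{a,\gamma}$ and the full range of $s$ is essential.
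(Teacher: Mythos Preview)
Your argument is correct, but it follows a different path from the paper's proof. The paper exploits the two hypotheses in \eqref{eq:Aconjugation} in the opposite order: it first combines $A^2=I_{2d}$ with the symplectic condition to obtain $A^\tT J=JA$, which immediately forces the block structure $A=\left(\begin{smallmatrix}\alpha&\beta\\\gamma&\alpha^\tT\end{smallmatrix}\right)$ with $\beta,\gamma$ skew-symmetric. It then tests the intertwining relation \emph{only} on the translation subgroup $x=(1,0,t_1,\tilde t)$, obtaining $\beta\sigma(t)=0$ and $\alpha^\tT\sigma(t)=-\sigma(t)\alpha$, and by choosing $t=(1,0,\dots,0)$ shows that the first column of $A$ vanishes, contradicting invertibility. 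Your approach instead begins by testing on the dilation--shear subgroup (where $R_{-1}$ acts trivially and the relation becomes a genuine commutation), forcing $A$ down to $\operatorname{diag}(\alpha I_d,-\alpha I_d)$, and then reads off the contradiction $\alpha^2=-1$ from the symplectic form. Both routes are valid; the paper's is shorter because it never needs the dilation or shear elements, while yours is more systematic and makes transparent why the obstruction is ultimately that $-I_{2d}\notin\Sp(d,\RR)$ in the guise required. Your reductions for (i) and (ii) coincide with the paper's.
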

\begin{proof}
1. Assume that there exists 
$
  A 
:=
  \left(
    \begin{smallmatrix} 
      \alpha & \beta \\ \gamma & \delta 
    \end{smallmatrix}
  \right)
\in 
  \Sp(d,\RR)
$ 
satisfying \eqref{eq:Aconjugation} for $\kappa^+$.

Since $A$ is symplectic it holds $A^\tT J A = J$ and since $A^{-1} = A$ further $A^\tT J = J A$. 
Hence
  \begin{equation*}
    \begin{pmatrix}
      \alpha^\tT & \gamma^\tT \\ \beta^\tT & \delta^\tT
    \end{pmatrix}
    \begin{pmatrix}
      0 & I_d \\ -I_d & 0
    \end{pmatrix}
  =
    \begin{pmatrix}
      -\gamma^\tT & \alpha^\tT \\ -\delta^\tT &\beta^\tT
    \end{pmatrix}
  =
    \begin{pmatrix}
      0 & I_d \\ -I_d & 0
    \end{pmatrix}
    \begin{pmatrix}
      \alpha & \beta \\ \gamma & \delta
    \end{pmatrix}
  =
    \begin{pmatrix}
      \gamma & \delta \\ -\alpha & -\beta
    \end{pmatrix}
  \end{equation*}
  which implies  
  $\beta = -\beta^\tT$, $\gamma = -\gamma^\tT$ and $\delta = \alpha^\tT$.
  Thus, 
  $A = \left(\begin{smallmatrix} \alpha & \beta \\ \gamma & \alpha^\tT \end{smallmatrix}\right)$ 
  with skew-symmetric $\beta,\gamma$.
  In particular, $\beta$ and $\gamma$ have zero diagonal elements.
  The second condition in \eqref{eq:Aconjugation} with 
  $x := (1,0,t_1,\tilde{t})$ results by definition of $\kappa^+$ in 
  \begin{equation*}
    A
    \begin{pmatrix}
      I_d & 0 \\ \sigma(t_1,\tilde{t}) & I_d
    \end{pmatrix}
  =
    \begin{pmatrix}
      I_d & 0 \\ -\sigma(t_1,\tilde{t}) & I_d
    \end{pmatrix}  
    A
  \end{equation*}
  and straightforward computation shows that this implies
  \begin{align} \label{eins}
    \beta \sigma(t_1,\tilde t) 
  &= 
    0 
  = 
    \sigma(t_1,\tilde t) \beta,
  \\
    \alpha^\tT \sigma(t_1,\tilde t) 
  &= 
    - \sigma(t_1,\tilde t) \alpha \label{zwei}
  \end{align}
  for all $t \in \RR^d$.
  Since $\beta = -\beta^\tT$, it has the form
  $
    \beta
  =
    \left(\begin{smallmatrix} 
      0 & -u^\tT \\ u & M
    \end{smallmatrix}\right)
  $
  with $u\in\RR^{d-1}$ and $M^\tT = -M\in\RR^{d-1 \times d-1}$.
  Choosing $t:=(1,0,\ldots,0)$ in \eqref{eins} we see immediately 
  by definition \eqref{def:spec_sigma} of $\sigma$ that $u = 0$.
  Let
  $
    \alpha
  =
    \left(\begin{smallmatrix}
      a & w^\tT \\
      v & N
    \end{smallmatrix}\right)
  $
  with $v,w\in\RR^d$ and $N\in\RR^{d-1 \times d-1}$. 
  Evaluating \eqref{zwei} for $t:=(1,0,\ldots,0)$ we obtain
  $a=0$ and $w = 0$. 
  In summary, the matrix $A$ is of the form
  \begin{equation*}
    A
  =
    \left(
    \begin{array}{cc}
      \begin{array}{cc}
        0 & \mathbf{0}^\tT \\  
       v & N              
      \end{array}
      &
      \begin{array}{cc}
        0 & \mathbf{0}^\tT \\
        \mathbf{0} & M 
      \end{array}
      \\
      \gamma
      &
      \begin{array}{cc}
        0 & v^\tT \\
        \mathbf{0} & N^\tT
    \end{array}
    \end{array}
    \right).
  \end{equation*}
  Evidently, $A$ is not invertible because it has a zero column.

  2. (i) Assume that there exists $A \in \Sp(d,\RR)$ satisfying \eqref{eq:Aconjugation} for some conjugation map $\kappa^+_B$.
  But then $\tilde A := B^{-1} A B \in  \Sp(d,\RR)$ fulfills \eqref{eq:Aconjugation} for $\kappa^+$. 
  This contradicts the first part of the
  proof. 

  (ii) Finally, assume there exists $A \in \Sp(d,\RR)$ such that
  $A\kappa^+_\varphi(y)A^{-1}=\kappa^+_\varphi(R_{-1} y)$ for all $y\in \SS^+$. 
  With $y=\varphi^{-1}(x)$ we get $A \kappa^+(y)A^{-1}= \kappa^+(R_{-1} y)$, 
  which is again a contradiction. 
\end{proof}
Lemma~\ref{lemma:injective_extension} and Lemma~\ref{lemma:no_A_for_Filippo_map} imply that the special
group homomorphism  
$\kappa^+\colon \SS^{+} \rightarrow  \Sp(d,\RR)$ in \eqref{eq:definition_g_+} as well as
its conjugations or concatenations with group automorphisms of $\SS^{+}$ satisfying \eqref{isoprop}
cannot be extended to a group homomorphism of $\SS$ to $\Sp(d,\RR)$.
For $d=2$ we have the sharper result that this holds true for \emph{all} 
injective continuous group homomorphisms $g^+\colon \SS^+ \rightarrow \Sp(2,\RR)$.
The following theorem will be proved in the next section.
%
\begin{theorem} \label{kommt_noch}
Let $\gamma\in (0,1)\setminus \{\frac{1}{3}, \frac{2}{3}\}$.
For any injective continuous group homomorphism 
$g^+\colon \SS^+ \rightarrow \Sp(2,\RR)$ 
there exists $B\in\Sp(2,\RR)$ and a continuous group
isomorphism  $\varphi$ of $\SS^+$ satisfying \eqref{isoprop}
such that 
\begin{equation*}
  g^+(x)
=
  B \kappa^+(\varphi(x)) B^{-1}.
\end{equation*}
\end{theorem}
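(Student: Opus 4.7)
The plan is to reduce to Lie algebras and apply the uniqueness of embeddings of the extended Heisenberg algebra into $\sptwor$ from Section~\ref{sec4}. Since $g^+$ is a continuous homomorphism between Lie groups, it is automatically smooth and differentiates to an injective Lie algebra homomorphism $dg^+\colon\mathfrak{s}^+ \to \sptwor$, where $\mathfrak{s}^+$ is the Lie algebra of $\SS^+$. Because $\SS^+$ is connected and simply connected, every Lie algebra homomorphism out of $\mathfrak{s}^+$ integrates to a unique continuous group homomorphism, so it suffices to argue infinitesimally and then exponentiate.

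By Lemma~\ref{lem:HS}, $\mathfrak{s}^+$ is isomorphic to the polarized extended Heisenberg algebra. The main theorem of Section~\ref{sec4} asserts that for $\gamma \notin \{1/3, 2/3\}$ any two injective embeddings of this algebra into $\sptwor$ have images conjugate under the adjoint action of $\Sp(2,\RR)$. Applied to $dg^+$ and $d\kappa^+$, this produces some $B \in \Sp(2,\RR)$ with $\ad(B)(dg^+(\mathfrak{s}^+)) = d\kappa^+(\mathfrak{s}^+)$, so that
\begin{equation*}
d\varphi := (d\kappa^+)^{-1} \circ \ad(B) \circ dg^+
\end{equation*}
is a well-defined Lie algebra automorphism of $\mathfrak{s}^+$ obeying $\ad(B)\circ dg^+ = d\kappa^+ \circ d\varphi$. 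Simple connectedness of $\SS^+$ lifts $d\varphi$ to a unique continuous group automorphism $\varphi$, and exponentiating the identity above yields $B\,g^+(x)\,B^{-1} = \kappa^+(\varphi(x))$; replacing $B$ by $B^{-1}$ gives the form claimed in the theorem.

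It remains to arrange that $\varphi$ satisfies \eqref{isoprop}, equivalently that $d\varphi$ commutes with the involution $\theta := dR_{-1}$ that fixes the generators $X_1, X_2$ (dilation and shear) and negates the generators $X_3, X_4$ (translations). For $\gamma \in (0,1)\setminus\{1/2\}$ the weights $1-\gamma, 1, \gamma$ of $\ad(X_1)$ on the Heisenberg ideal are pairwise distinct, and a direct computation parametrizes $\mathrm{Aut}(\mathfrak{s}^+)$ explicitly; the $\theta$-commuting automorphisms cut out a codimension-two subfamily defined by the vanishing of the $X_3, X_4$ components of $d\varphi(X_1)$. The residual freedom to multiply $B$ by elements of the normalizer of $d\kappa^+(\mathfrak{s}^+)$ inside $\Sp(2,\RR)$ modifies $d\varphi$ by inner automorphisms of the image and is large enough to kill the offending components; the special value $\gamma = 1/2$ only adds the discrete swap $X_2 \leftrightarrow X_4$, which itself commutes with $\theta$.

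The principal obstacle is the Section~\ref{sec4} classification itself: one must enumerate the four-dimensional solvable subalgebras of $\sptwor$ that carry a three-dimensional Heisenberg ideal with a complementary semisimple element acting with weights $1-\gamma, 1, \gamma$, and show that they form a single $\Sp(2,\RR)$-conjugacy class except at $\gamma \in \{1/3,2/3\}$, where the weights acquire additional integer-linear relations (such as $1-\gamma = 2\gamma$ at $\gamma=1/3$) that permit genuinely new, non-conjugate embedding images inside $\sptwor$.
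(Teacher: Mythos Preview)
Your overall strategy---pass to Lie algebras via simple connectedness, invoke the Section~\ref{sec4} classification, then lift back---is exactly the paper's. However, you slightly under-read Theorem~\ref{thm:main}: it does not merely assert that the \emph{images} of any two embeddings are $\Sp(2,\RR)$-conjugate, but directly produces $B$ together with a $\Phi$ of the explicit diagonal form~\eqref{eq:3}, and such $\Phi$ automatically commute with $(R_{-1})_*$. The paper obtains this by first bringing the image of the dilation generator $D$ to Williamson normal form (forcing $D_1$ with specific $(a_1,a_2)$), after which $P,Q$ are pinned down up to scalars---so the only ``automorphism slack'' left is the scaling~\eqref{eq:3}, which is $\theta$-equivariant for free.

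Your alternative route---take a weaker conjugacy-of-images statement and then repair $\varphi$ by multiplying $B$ into the normalizer---does work, but the sentence ``is large enough to kill the offending components'' is where the actual content hides. Concretely, one must check that conjugation by $\exp(q_0\,d\kappa^+(Q^+))\exp(r_0\,d\kappa^+(T^+))\in\kappa^+(\SS^+)$ shifts the $Q^+$- and $T^+$-components of $d\varphi(D^+)$ by $-2\gamma q_0$ and $-2r_0 - a q_0$ respectively, so both can be set to zero; and that vanishing of precisely these two components is equivalent to $d\varphi\,\theta=\theta\,d\varphi$ (your codimension-two claim). This is a short calculation, but as written it is asserted rather than shown. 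Also, what you get from the normalizer are automorphisms of $\mathfrak{s}^+$ induced by conjugation, which need not all be inner in the strict sense---though here the inner ones already suffice.

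Your closing paragraph correctly locates the real work and the reason $\gamma\in\{\tfrac13,\tfrac23\}$ is excluded.
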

As immediate consequence of the theorem,
Lemma~\ref{lemma:injective_extension} and
Lemma~\ref{lemma:no_A_for_Filippo_map}
we obtain our main result
(for $\gamma=\frac{1}{3}$ and $\gamma=\frac{2}{3}$ see Remark~\ref{ein_drittel}).
%
\begin{theorem} \label{main_shear}
  Let $\gamma\in (0,1)$. 
  There does not exist an injective continuous homomorphism from $\SS$ into $\Sp(2,\RR)$ 
  and into any of its coverings.
\end{theorem}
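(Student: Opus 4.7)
The plan is to argue by contradiction and reduce everything to the obstructions already identified for the canonical embedding $\kappa^+$. Suppose an injective continuous homomorphism $g\colon\SS\to\Sp(2,\RR)$ existed. Restricting to the closed normal subgroup $\SS^+$ would yield an injective continuous homomorphism $g^+\colon\SS^+\to\Sp(2,\RR)$, and the forward direction of Lemma~\ref{lemma:injective_extension} would force the existence of some $A\in\Sp(2,\RR)$ with $A^2 = I_4$ and $A\,g^+(x) = g^+(R_{-1}x)\,A$ for all $x\in\SS^+$.

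For $\gamma\in(0,1)\setminus\{\tfrac{1}{3},\tfrac{2}{3}\}$, I would invoke Theorem~\ref{kommt_noch} to write $g^+(x) = B\,\kappa^+(\varphi(x))\,B^{-1}$ for some $B\in\Sp(2,\RR)$ and a continuous group automorphism $\varphi$ of $\SS^+$ satisfying \eqref{isoprop}. Plugging this expression into the intertwining relation above and conjugating both sides by $B$ produces the candidate $\tilde A := B^{-1}AB$, which still satisfies $\tilde A^2 = I_4$; exploiting \eqref{isoprop} to move $R_{-1}$ past $\varphi$ and substituting $y := \varphi(x)$, which traverses all of $\SS^+$ since $\varphi$ is an automorphism, one finds that $\tilde A$ satisfies exactly $\tilde A\,\kappa^+(y) = \kappa^+(R_{-1}y)\,\tilde A$ with $\tilde A^2 = I_4$. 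This is precisely what Lemma~\ref{lemma:no_A_for_Filippo_map} forbids, so the assumed $g$ cannot exist in this range of $\gamma$.

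The covering case is then a brief corollary: an injective continuous homomorphism $i\colon\SS\to G$ into a covering group $(G,p)$ of $\Sp(2,\RR)$ would, by Lemma~\ref{lemma:covering}, give rise to an injective continuous homomorphism $p\circ i\colon\SS\to\Sp(2,\RR)$, which has just been ruled out. The two exceptional values $\gamma\in\{\tfrac{1}{3},\tfrac{2}{3}\}$ are to be treated in Remark~\ref{ein_drittel}. I expect no serious obstacle, since Theorem~\ref{kommt_noch} and the two lemmas already carry the technical load; the only mildly delicate point is verifying that simultaneously absorbing an inner conjugation by $B$ and an outer twist by $\varphi$ fits the framework of Lemma~\ref{lemma:no_A_for_Filippo_map}. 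The conjugation by $B$ is transferred to the candidate intertwiner exactly as in part~(i) of that lemma, while the compatibility assumption \eqref{isoprop} on $\varphi$ is tailored so that part~(ii) applies after the substitution $y = \varphi(x)$.
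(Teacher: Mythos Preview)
Your proposal is correct and follows essentially the same route as the paper: the paper states that Theorem~\ref{main_shear} is an immediate consequence of Theorem~\ref{kommt_noch}, Lemma~\ref{lemma:injective_extension}, and Lemma~\ref{lemma:no_A_for_Filippo_map} (with Remark~\ref{ein_drittel} covering $\gamma\in\{\tfrac{1}{3},\tfrac{2}{3}\}$), and your argument spells out precisely this chain, including the covering case via Lemma~\ref{lemma:covering}. Your unrolling of the simultaneous conjugation by $B$ and twist by $\varphi$ into a single intertwiner $\tilde A=B^{-1}AB$ for $\kappa^+$ is exactly the composition of steps~(i) and~(ii) in the proof of Lemma~\ref{lemma:no_A_for_Filippo_map}.
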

%
We state the above result for  $\gamma\in (0,1)$  
since this is the range of interest in the applications. 
However, a simple inspection of the proof of
Theorem~\ref{main_shear} shows that 
Theorem~\ref{kommt_noch} holds true for any
$\gamma\in\RR\setminus \{0,1\}$. 

It is not clear if the theorem can be generalized to higher dimensions $d>2$.
However, we conjecture that the result holds true in arbitrary dimensions.
%
\section{Proof of Theorem~\ref{kommt_noch}} \label{sec4}


We start by examining the Lie algebra of the symplectic group in the next subsection and use the findings
for our embedding result in Subsection~\ref{subsec:main_result}.
%
\subsection{Root space decomposition and canonical forms} 
%
The Lie algebra $\sptwor$ of the symplectic group $\Sp (2,\RR)$
consists of the real $4\times 4$ matrices, called \emph{Hamiltonians}, which satisfy the equation 
$X^\tT J+JX=0$. It is  the $10$-dimensional Lie algebra
\begin{equation*}
  \sptwor
=
  \left\{
    \begin{pmatrix}
      M_{11} &  M_{12}    \\
      M_{21}  & - M_{11}^\tT
    \end{pmatrix}
  :
    M_{11}\in\RR^{2 \times 2},
   \;M_{12},M_{21}\in\Sym(2,\RR)
  \right\}.
\end{equation*}
\paragraph{Root space decomposition.}
To prove our main embedding result we need a representation 
of Hamiltonians with respect to a certain basis of $\sptwor$
which we provide next.
The maximally non compact \emph{Cartan subalgebra} of $\sptwor$ is given by
\begin{equation*}
  \ga
:=
  \Bigl\{
    H_{a,b}
  :=
    \begin{pmatrix}
      a & 0 &  0 &  0 \\
      0 & b &  0 &  0 \\
      0 & 0 & -a &  0 \\
      0 & 0 &  0 & -b
    \end{pmatrix}
  : 
    a,b\in\RR
  \Bigr\}
\end{equation*}
and has the natural basis $\{H_{1,0},H_{0,1} \}$.
We define the linear functionals $\alpha$ and $\beta$ on $\ga$ by
\begin{equation*}
  \alpha\left(H_{a,b}\right)
:=
  a-b,
\qquad
  \beta\left(H_{a,b}\right)
:=
  2b.
\end{equation*}
The functionals in
\begin{equation*}
  \triangle := \triangle^+ \cup \triangle^-, \quad 
  \triangle^+: = \left\{\alpha,\beta,\alpha+\beta,2\alpha+\beta\right\}, \quad
  \triangle^- := \{-\nu:\nu\in\triangle^+\}
\end{equation*}
form a so-called \emph{root system}. The root system is meaningful 
since for any non-zero functional $\nu$ not contained in $\triangle$
the vector space
\begin{equation*}
  \gg_\nu
:=
  \left\{
    X\in\sptwor
  :
    [H,X]=\nu(H)X
    \text{ for all }H\in\ga
  \right\}
\end{equation*}
is trivial. The \emph{root spaces}  $\gg_\nu$, $\nu \in \triangle$, are one-dimensional
and the linear space associated with the zero functional 
$\gg_0=\ga$  is two-dimensional. The four root vectors $X_\nu$ spanning the space
$\gg_\nu$, $\nu \in \triangle^+$, are
\begin{footnotesize}
\begin{equation*}
  X_{\alpha}
:=
  \begin{pmatrix}
    0 & 1 &  0 & 0 \\
    0 & 0 &  0 & 0 \\
    0 & 0 &  0 & 0 \\
    0 & 0 & -1 & 0
  \end{pmatrix},\, 
  X_{\beta}
:=
  \begin{pmatrix}
    0 & 0 & 0 & 0 \\
    0 & 0 & 0 & 1 \\
    0 & 0 & 0 & 0 \\
    0 & 0 & 0 & 0
  \end{pmatrix},\, 
  X_{\alpha+\beta}
:=
  \begin{pmatrix}
    0 & 0 & 0 & 1 \\
    0 & 0 & 1 & 0 \\
    0 & 0 & 0 & 0 \\
    0 & 0 & 0 & 0
  \end{pmatrix},\, 
X_{2\alpha+\beta}
:=
  \begin{pmatrix}
    0 & 0 & 2 & 0 \\
    0 & 0 & 0 & 0 \\
    0 & 0 & 0 & 0 \\
    0 & 0 & 0 & 0
  \end{pmatrix}.
\end{equation*}
\end{footnotesize}
The root vectors $X_{-\nu}$ spanning $\gg_{-\nu}$, $-\nu \in \triangle^-$, 
are given by the Cartan involution
\begin{equation*}
	X_{-\nu} := - X_{\nu}^\tT.
\end{equation*}
The Lie algebra $\sptwor$ has the following vector space direct sum decomposition, known as
\emph{root space decomposition}: 
\begin{equation*}
  \sptwor
=
  \ga+\sum_{\nu\in\triangle}\gg_\nu.
\end{equation*}
To show our embedding result we will label Hamiltonians with respect to the basis 
\begin{align} \label{basis}
  \mathcal{B}
&=
  \Bigl\{
    X_{\alpha}, X_{\beta}, X_{\alpha+\beta}, X_{2\alpha+\beta}, 
    X_{-\alpha}, X_{-\beta}, X_{-\alpha-\beta}, X_{-2\alpha-\beta}, H_{1,0},H_{0,1}
  \Bigr\} 
\\
&= 
  \{B_k:k=1,\ldots,10\}, \nonumber
\end{align}
where the enumeration is with respect to the above ordering of the elements.
The following table contains the commutator rules of the basis elements:
 \begin{tiny}
\begin{table}[htbp]
  \resizebox{\textwidth}{!}{
  \begin{tabular}{l|rrrr|rrrr|rr} \toprule
    $[\,\cdot\,,\,\cdot\,]$ & $X_\alpha$ & $X_\beta$ & $X_{\alpha+\beta}$ & $X_{2\alpha + \beta}$ & $X_{-\alpha}$ & $X_{-\beta}$ & $X_{-\alpha-\beta}$ & $X_{-2\alpha-\beta}$ & $H_{1,0}$ & $H_{0,1}$ \\\midrule
    $X_\alpha$ & $0$ & $X_{\alpha+\beta}$ & $X_{2\alpha + \beta}$ & $0$ & $H_{-1,1}$ & $0$ & $-2X_{-\beta}$ & $-2X_{-\alpha-\beta}$ & $-X_\alpha$ & $X_\alpha$ \\
    $X_\beta$ & $-X_{\alpha+\beta}$ & $0$ & $0$ & $0$ & $0$ & $-H_{0,1}$ & $X_{-\alpha}$ & $0$ & $0$ & $-2X_\beta$ \\
    $X_{\alpha+\beta}$ & $-X_{2\alpha + \beta}$ & $0$ & $0$ & $0$ & $2X_\beta$ & $-X_\alpha$ & $-H_{1,1}$ & $2X_{-\alpha}$ & $-X_{\alpha+\beta}$ & $-X_{\alpha+\beta}$ \\
    $X_{2\alpha + \beta}$ & $0$ & $0$ & $0$ & $0$ & $2X_{\alpha+\beta}$ & $0$ & $-2X_\alpha$ & $-H_{4,0}$ & $-2X_{2\alpha + \beta}$ & $0$ \\\midrule
    $X_{-\alpha}$ & $-H_{-1,1}$ & $0$ & $-2X_\beta$ & $-2X_{\alpha+\beta}$ & $0$ & $X_{-\alpha-\beta}$ & $X_{-2\alpha-\beta}$ & $0$ & $X_{-\alpha}$ & $-X_{-\alpha}$ \\
    $X_{-\beta}$ & $0$ & $H_{0,1}$ & $X_\alpha$ & $0$ & $-X_{-\alpha-\beta}$ & $0$ & $0$ & $0$ & $0$ & $2X_{-\beta}$ \\
    $X_{-\alpha-\beta}$ & $2X_{-\beta}$ & $-X_{-\alpha}$ & $H_{1,1}$ & $2X_\alpha$ & $-X_{-2\alpha-\beta}$ & $0$ & $0$ & $0$ & $X_{-\alpha-\beta}$ & $X_{-\alpha-\beta}$ \\
    $X_{-2\alpha-\beta}$ & $2X_{-\alpha-\beta}$ & $0$ & $-2X_{-\alpha}$ & $H_{4,0}$ & $0$ & $0$ & $0$ & $0$ & $2X_{-2\alpha-\beta}$ & $0$ \\\midrule
    $H_{1,0}$ & $X_\alpha$ & $0$ & $X_{\alpha+\beta}$ & $2X_{2\alpha + \beta}$ & $-X_{-\alpha}$ & $0$ & $-X_{-\alpha-\beta}$ & $-2X_{-2\alpha-\beta}$ & $0$ & $0$ \\
    $H_{0,1}$ & $-X_\alpha$ & $2X_\beta$ & $X_{\alpha+\beta}$ & $0$ & $X_{-\alpha}$ & $-2X_{-\beta}$ & $-X_{-\alpha-\beta}$ & $0$ & $0$ & $0$ \\\bottomrule
  \end{tabular}
  }
  \caption{Commutator relations $[B_i,B_j]$ for $B_i,B_j \in\cB$, $i,j=1,\ldots,10$.}
  \label{tab:commutator_relations}
\end{table}
\end{tiny}
\paragraph{Canonical normal forms.}
Next we give the complete list of canonical normal forms to which we can reduce 
real $4 \times 4$ Hamiltonians by means of real symplectic conjugations, i.e., by applying
$\ad(B) X := B X B ^{-1}$ with $B \in \Sp (2,\RR)$.
For arbitrary space dimensions and symplectic spaces over any field 
the result is due to Williamson \cite{Wil36}.
For real Hamiltonians the characterization
can be found in a synthetic form in \cite[Appendix 6]{Arn89} which we briefly recall for $\sptwor$
below.

The canonical normal forms are closely related to the Jordan normal forms of Hamiltonians.
The eigenvalues of Hamiltonians are of \emph{four types}, namely
(i) real pairs ($+a,-a$), $a > 0$, (ii) purely imaginary pairs ($+bi, -bi$), $b> 0$,
(iii) quadruples ($\pm a  \pm ib$), $a>0$, $b>0$, and (iv) zeros.
The Jordan blocks for the two members of a pair have the same structure, 
and there is an even number of blocks of odd order with zero eigenvalue.

In Arnol'd's book \cite{Arn89} the canonical normal forms are nicely determined by the help of a quadratic form (Hamiltonian function).
To this end, note that any $X \in \sptwor$ is related to a  symmetric matrix $A \in\Sym(4,\RR)$ by $J X= A$.
Now, $A\in\Sym(4,\RR)$ and hence $X = -JA$ is completely determined by the quadratic form
 \begin{equation*}
   H_A(x) :=
  \frac{1}{2} \langle Ax,x \rangle.
\end{equation*}
Using the notation $x:=(p_1,\ldots,p_k, q_1,\ldots,q_k)^\tT$, $k \in \{1,2\}$, 
the list of canonical normal forms for the irreducible cases and their relation to their Jordan normal forms read as follows (order as in \cite{Arn89}):
\begin{enumerate} [(A)]
  \item 
  If $X \in \mathfrak{sp}(1,\RR)$ has a pair of Jordan blocks of order one with real eigenvalues $\pm a$, $a \geq 0$, then it has the normal form
  \begin{equation*}
    H_A(p_1,q_1)
  =
    -ap_1q_1
  \quad \text{and} \quad
    -JA 
  = 
    \begin{pmatrix} 
      a & 0 \\ 0 & -a
	\end{pmatrix}.
  \end{equation*}
    \item 
  If $X \in \mathfrak{sp}(2,\RR)$ has Jordan blocks of order two with real eigenvalues $\pm a$, $a \geq 0$, then 
  \begin{equation*}
    H_A(p_1,p_2,q_1,q_2)
  =
    -a(p_1q_1+p_2q_2) + p_1q_2
  \quad \text{and} \quad
    -JA = D_2 
  = 
    \begin{pmatrix}
      a  & 0 & 0  & 0  \\
      -1 & a & 0  & 0  \\
      0  & 0 & -a & 1  \\
      0  & 0 & 0  & -a
    \end{pmatrix}.
  \end{equation*}
    \item 
  If $X \in \mathfrak{sp}(2,\RR)$ has a quadruple of Jordan blocks of order one with complex eigenvalues $\pm a \pm ib$, $a,b>0$, then      
	\begin{equation*}
    H_A(p_1,p_2,q_1,q_2)
  =
    -a(p_1q_1+p_2q_2)+b(p_1q_2-p_2q_1)
		\  \text{and} \ 
		-JA = D_3 = 
				\begin{pmatrix}
     a & b &  0 &  0 \\
    -b & a &  0 &  0 \\
     0 & 0 & -a &  b \\
     0 & 0 & -b & -a
  \end{pmatrix}.
  \end{equation*}
  \item 
  If $X \in \mathfrak{sp}(2,\RR)$ has a single Jordan block of order four with eigenvalue zero, then, for $\varepsilon = \pm 1$,
  \begin{equation*}
    H_A(p_1,p_2,q_1,q_2)
  =
    \frac{\eps}{2}(p_1^2-2q_1q_2)-p_1q_2
		\quad \text{and} \quad
		-JA = D_4 = \begin{pmatrix}
    0    & 0 & 0    & \eps \\
    1    & 0 & \eps & 0    \\
    \eps & 0 & 0    & -1   \\
    0    & 0 & 0    & 0
  \end{pmatrix}.
  \end{equation*}
  \item 
  If $X \in \mathfrak{sp}(1,\RR)$ has a pair of Jordan blocks of order one 
  with purely imaginary eigenvalues $\pm ib$, $b>0$, then,
  for $\varepsilon = \pm 1$,
  \begin{equation*}
    H_A(p_1,q_1)
  =
    -\frac{\eps}{2}(b^2p_1^2+q_1^2)
		\quad \text{and} \quad
		-JA = \begin{pmatrix} 0&\varepsilon\\-\varepsilon b^2&0\end{pmatrix}.
  \end{equation*}
  If $X \in \mathfrak{sp}(1,\RR)$ has a single Jordan block of order two with eigenvalue zero, then it has the canonical normal 
  $    H_A(p_1,q_1)   =
    -\frac{\eps}{2}q_1^2
	$,
	which coincides with the above form for $b=0$.
	\item 
  If $X \in \mathfrak{sp}(2,\RR)$ has a pair of Jordan blocks of order two 
  with purely imaginary eigenvalues $\pm ib$, $b>0$, then,
	for $\varepsilon = \pm 1$,
  \begin{equation*}
    H_A(p_1,p_2,q_1,q_2)
  =
    -\frac{\eps}{2}(\frac{1}{b^2}q_1^2+q_2^2)-b^2p_1q_2+p_2q_1
  \  \text{and} \ 
    -JA 
  = 
    D_7 
  = 
    \begin{pmatrix}
      0   & -1 & \frac{\eps}{b^2} &  0    \\
      b^2 &  0 &  0               &  \eps \\
      0   &  0 &  0               & -b^2  \\
      0   &  0 &  1               &  0
    \end{pmatrix}.
  \end{equation*}
\end{enumerate}
Combining the  irreducible $2 \times 2$ cases (A) and (E) we obtain the remaining three canonical normal forms.
We will denote by $\text{(X)}\oplus\text{(Y)}$ the canonical form that corresponds to the direct sum of the quadratic forms. We then multiply on the left by $-J$ and obtain the corresponding matrix in $\mathfrak{sp}(2,\RR)$. We obtain three further cases, namely: \\
for $\text{(A)} \oplus \text{(A)}$:
\begin{equation*}
  D_1 
= 
  \begin{pmatrix}
    a_1 & 0   & 0    & 0    \\
    0   & a_2 & 0    & 0    \\
    0   & 0   & -a_1 & 0    \\
    0   & 0   & 0    & -a_2
  \end{pmatrix}, 
\quad 
  a_1 \geq a_2 \geq 0,
\end{equation*}
for $\text{(E)} \oplus \text{(A)}$: 
\begin{equation*}
  D_5
=
  \begin{pmatrix}
     0       & 0 & \eps &  0 \\
     0       & a & 0    &  0 \\
    -b^2\eps & 0 & 0    &  0 \\
     0       & 0 & 0    & -a
  \end{pmatrix},
\qquad 
  a\geq 0,\;b\geq0,\; \eps=\pm 1,
\end{equation*}
and for $\text{(E)} \oplus \text{(E)}$:
\begin{equation*}
  D_6
=
  \begin{pmatrix}
     0         &  0         & \eps & 0    \\
     0         &  0         & 0    & \eta \\
    -b_1^2\eps &  0         & 0    & 0    \\
     0         & -b_2^2\eta & 0    & 0
  \end{pmatrix},
\qquad 
  b_1\geq b_2\geq0,\;(\eps,\eta)\in\{(1,1),(1,-1),(-1,-1)\}. 
\end{equation*}

We summarize our specifications of the results in \cite{Arn89,Wil36} for $d=2$:
\begin{corollary} \label{wil}
  For any $X\in\sptwor$, 
  there exists $B \in\Sp(2,\RR)$ 
  such that $\ad( B) X \in \mathcal{N}$,
  where $\mathcal{N} :=  \{D_k: k = 1,\ldots,7\}$.
\end{corollary}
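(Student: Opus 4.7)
The plan is to reduce the statement to the general Williamson normal form theorem and then enumerate which direct sums of irreducible blocks are available in the low rank $d=2$. The heavy lifting is done by Williamson \cite{Wil36} and the presentation in \cite[Appendix 6]{Arn89}; what remains is to verify that the seven matrices $D_1,\dots,D_7$ exhaust the list in dimension four.

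First I would invoke the structural fact from Williamson's theorem: given $X\in\sptwor$, the symplectic vector space $\RR^4$ decomposes as a symplectic direct sum of $X$-invariant subspaces on each of which $X$ acts as one of the irreducible normal forms (A)--(F) listed before the corollary. Equivalently, the associated quadratic form $H_A$ decomposes as an orthogonal sum of the irreducible Hamiltonian functions (A)--(F), and a symplectic change of basis realizing this decomposition is precisely an element $B\in\Sp(2,\RR)$ with $\ad(B)X$ in block-diagonal form with blocks from (A)--(F).

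Next I would enumerate the possibilities by real dimension. The irreducible pieces have symplectic dimensions $2$ for (A) and (E), and $4$ for (B), (C), (D) and (F). Since $\dim_\RR\RR^4=4$, the only admissible decompositions are: a single irreducible $4$-dimensional block, which yields the four canonical forms $D_2$ (case B), $D_3$ (case C), $D_4$ (case D) and $D_7$ (case F); or a pair of $2$-dimensional blocks, giving the three combinations (A)$\oplus$(A), (E)$\oplus$(A) and (E)$\oplus$(E), which are precisely $D_1$, $D_5$ and $D_6$ as written out in the excerpt. Adding all cases gives the seven matrices in $\mathcal{N}$ and shows that every $X$ is symplectically conjugate to some $D_k$, $k=1,\dots,7$.

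The main potential obstacle is the bookkeeping of the discrete invariants (the parameters $a,b\geq 0$, the signs $\varepsilon,\eta\in\{\pm1\}$, and the inequalities like $a_1\geq a_2\geq 0$ or $b_1\geq b_2\geq 0$). These are needed to ensure that the normal form is really canonical, not merely a form; they are fixed by requiring an ordering of the irreducible summands and, for cases (E) and (F), by the sign of the quadratic form restricted to the isotypical components, which is the genuine invariant distinguishing $\varepsilon=+1$ from $\varepsilon=-1$ (see \cite[Appendix 6]{Arn89}). For our statement, however, only \emph{existence} of some $B$ with $\ad(B)X\in\mathcal{N}$ is claimed, so we only need the classification modulo these inequalities, which is immediate from Williamson's theorem once the irreducible list (A)--(F) is accepted.
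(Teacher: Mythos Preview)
Your proposal is correct and follows essentially the same approach as the paper: the corollary is presented there as a direct specialization of Williamson's theorem (via Arnol'd's list of irreducible normal forms (A)--(F)), with the only remaining work being the dimension count that in $\RR^4$ one gets either a single $4$-dimensional irreducible block (yielding $D_2,D_3,D_4,D_7$) or a sum of two $2$-dimensional blocks (A)$\oplus$(A), (E)$\oplus$(A), (E)$\oplus$(E) (yielding $D_1,D_5,D_6$). Your treatment of the discrete invariants is slightly more explicit than the paper's, but the argument is the same.
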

%
\subsection{Embedding result} \label{subsec:main_result}
%
To prove Theorem~\ref{kommt_noch},  we first observe that $\SS^+$ is
simply connected so that we can pass to its Lie algebra denoted by $\gh_e^+$, see
also Lemma~\ref{lem:HS}.
Let 
$g_*^+$, $\kappa_*^+$ and  $(R_{-1})_{*}$ 
be the tangent maps corresponding to 
$g^+$, $\kappa^+$ and  $R_{-1}$ in Theorem~\ref{kommt_noch}. 
Note that $g_*^+$, $\kappa_*^+$ 
are Lie algebra embeddings of $\gh_e^+$  into the  $\sptwor$, whereas 
 $(R_{-1})_{*} $ is a Lie algebra isomorphism of $\gh_e^+$.  
We have to prove that there exists $B\in\Sp(2,\RR)$ and a Lie algebra isomorphism
$\Phi\colon\gh_e^+\to\gh_e^+$ satisfying  
\begin{equation} \label{nochmal} 
(R_{-1})_{*}  \Phi = \Phi (R_{-1})_{*} 
\end{equation}
such that 
\begin{equation} \label{to_show_1} 
 g^+_*= B (\kappa_*^+\Phi) B^{-1}.
\end{equation}

The image of $\gh_e^+$ under $\kappa_*^+$ is the Lie algebra of $\TDS(2)$ and,
with slight abuse of notation, we identify it with $\gh_e^+$. 
By taking the derivative of the following four one-parameters subgroups,
\begin{align*}
\eta\mapsto \kappa^+(\exp(2\eta),0,0,0) ,\;
\eta\mapsto \kappa^+(1,\eta,0,0), \;
\eta\mapsto \kappa^+(1,0,2\eta,0) , \;
\eta\mapsto \kappa^+(1,0,0,2\eta),
\end{align*}
we get a  basis of $\gh_e^+$, namely
\begin{equation}  \label{initial} 
D^+=- H_{1,0} + (1-2\gamma) H_{0,1}\qquad P^+=X_{-\alpha},\qquad
Q^+=-X_{-\alpha-\beta}, \qquad T^+=-X_{-2\alpha-\beta}.
\end{equation}
By Table~\ref{tab:commutator_relations}, the  non-zero brackets  of these generators are 
\begin{equation} \label{ext_H_rel_gamma+}
  [D^+,P^+]
=
  2(1-\gamma)
  P^+,
\quad
  [D^+,Q^+]
= 
  2\gamma
  Q^+,
\quad
 [P^+,Q^+]
=
  T^+,
\quad
\quad 
  [D^+,T^+]
=
  2T^+.
\end{equation}
The action of the Lie algebra isomorphism $(R_{-1})_{*}$ is given by
\begin{equation*}
(R_{-1})_{*}  D^+ = D^+, \quad (R_{-1})_{*}  P^+ = P^+,\quad (R_{-1})_{*}  Q^+ = -Q^+, \quad (R_{-1})_{*}  T^+ = -T^+.
\end{equation*}
Observe that, for any fixed $u,z\in\RR$ with $uz\neq 0$,  the
linear map $\Phi\colon\gh_e^+\to\gh_e^+$ defined  by
\begin{equation}
\Phi D^+ := D^+, \quad \Phi P^+ := u P^+, \quad \Phi Q^+ :=  z
Q^+, \quad \Phi T^+ :=  uz T^+\label{eq:3}
\end{equation}
is a Lie algebra isomorphism satisfying  \eqref{nochmal}. 

An arbitrary Lie algebra embedding $g_*^+\colon  \gh_e^+\rightarrow \sptwor$ is in one-to-one
correspondence with four linearly independent generators $D,P,Q,T$ of $\sptwor$
whose Lie brackets are given by
\begin{equation} \label{ext_H_rel_gamma_g}
  [D,P]
=
  2(1-\gamma)
  P,
\quad
  [D,Q]
= 
  2\gamma
  Q,\
\quad 
  [P,Q]
=
  T,
\quad 
  [D,T]
=
  2T,
\quad 
  [P,T] 
= 
  [Q,T] 
= 
  0
\end{equation}
where $D,P,Q,T$ are the images of $D^+,P^+,Q^+,T^+$ and hence determine $g_*^+$.
Then, by \eqref{to_show_1}, it remains to prove the following theorem.
%
\begin{theorem} \label{thm:main}
For $\gamma \in (0,1)\setminus\{\frac{1}{3}, \frac{2}{3}\}$, let $D^+,P^+,Q^+,T^+$ be given by \eqref{initial}.
Then, for any fixed generators $D,P,Q,T \in \sptwor$ fulfilling \eqref{ext_H_rel_gamma_g}, 
there exists $B\in\Sp(2,\RR)$
and  a Lie algebra isomorphism
$\Phi\colon\gh_e^+\to\gh_e^+$ satisfying  \eqref{nochmal} so that
\begin{equation*}
\ad(B)\Phi D^+= D, \quad \ad(B)\Phi P^+= P, \quad \ad(B)\Phi Q^+=
Q, \quad \ad(B)\Phi T^+= T.\label{eq:1}
\end{equation*}
\end{theorem}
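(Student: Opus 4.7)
The plan is to reduce $D$ via $\Sp(2,\RR)$-conjugation to an element of the Cartan subalgebra $\ga$, locate $P,Q,T$ via the root-space decomposition, and finally match the resulting data to the standard embedding $\kappa^+_*$ using the Weyl group together with the scaling automorphism $\Phi$ of~\eqref{eq:3}.

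First I would apply Corollary~\ref{wil} to produce $B_0\in\Sp(2,\RR)$ with $\ad(B_0)D$ in one of the canonical forms $D_1,\ldots,D_7$, and (replacing the whole quadruple by its $\ad(B_0)$-image) assume $D$ is itself in canonical form. The relations~\eqref{ext_H_rel_gamma_g} force $P,Q,T$ to be $\ad(D)$-eigenvectors with the three distinct positive real eigenvalues $2(1-\gamma),\,2\gamma,\,2$, which must therefore lie in the real part of the spectrum of the semisimple part $\ad(D_s)$. Since the eigenvalues of $\ad(X)$ are differences of eigenvalues of $X$, a short case-by-case inspection of the spectra shows that for each of the forms $D_2,\dots,D_7$ the real part of the spectrum of $\ad(D_s)$ is contained in $\{0,\pm 2a\}$ for a single $a\ge 0$ — either because the eigenvalues of $D_s$ are confined to $\{\pm a\}$, or because purely imaginary eigenvalues only produce the real difference $0$, or because $D$ is nilpotent. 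This is incompatible with three distinct positive real values, hence $D$ must be in form $D_1$, i.e., $D=H_{a_1,a_2}\in\ga$.

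With $D\in\ga$, each of $P,Q,T$ lies in an $\ad(D)$-eigenspace. For $\gamma\in(0,1)\setminus\{1/3,1/2,2/3\}$ each such eigenspace is one-dimensional and coincides with a single root space $\gg_\nu$, so a finite enumeration becomes possible: the algebraic conditions $\rho_P+\rho_Q\in\triangle$ (so that $T=[P,Q]\neq 0$) together with $2\rho_P+\rho_Q,\,2\rho_Q+\rho_P\notin\triangle$ (so that $[P,T]=[Q,T]=0$) single out, via Table~\ref{tab:commutator_relations}, exactly eight admissible pairs $(\rho_P,\rho_Q)$ in the $C_2$ root system. For each of them, solving $\rho_P(D)=2(1-\gamma)$ and $\rho_Q(D)=2\gamma$ determines $(a_1,a_2)$ uniquely and yields the same multiset $\{|a_1|,|a_2|\}=\{1,|1-2\gamma|\}$; thus $D$ always lies in the Weyl orbit of $D^+$.

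The Weyl group $W(C_2)$ acts on $\ga$ by signed permutations of $(a_1,a_2)$ and each of its eight elements is realised by conjugation with some element of the maximal compact $\mathrm{U}(2)\subset\Sp(2,\RR)$. For $\gamma\neq 1/2$ this action is simply transitive on the eight admissible configurations, so one may choose $B\in\Sp(2,\RR)$ sending $D^+$ to $D$ and moving the root spaces of $P^+,Q^+,T^+$ to those of $P,Q,T$. The remaining nonzero scalars $u,z$ on the images of $P^+,Q^+$ force the scalar on the image of $T^+$ to be $uz$ (from $[P,Q]=T$), which is precisely the automorphism $\Phi$ of~\eqref{eq:3}; the condition~\eqref{nochmal} is immediate. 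The main obstacle is the first step, i.e., the elimination of the non-Cartan canonical forms, which requires a careful spectral analysis case-by-case. The exclusion $\gamma\notin\{1/3,2/3\}$ enters essentially in the root-space enumeration — at these values $\beta(D^+)$ coincides with $\pm 2\gamma$, enlarging the $\ad(D^+)$-eigenspace containing $Q$ to two dimensions and producing a one-parameter family of genuinely inequivalent admissible $Q$'s that cannot be absorbed by Weyl action or scaling — while the marginal $\gamma=1/2$ (where the $P$- and $Q$-eigenvalues coincide) is handled by the enlarged automorphism group of $\gh_e^+$ available at that parameter.
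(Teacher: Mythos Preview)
Your approach differs substantially from the paper's: the paper computes the $10\times 10$ matrices $M_\Gamma$ of~\eqref{matrix_M} for each canonical form $D_1,\ldots,D_7$ and analyses their ranks and kernels directly (see the Appendix), whereas you argue via the real $\ad(D_s)$-spectrum and then via the Weyl group action on the $C_2$ root system. This is more conceptual and, for $\gamma\neq\tfrac12$, essentially correct; in particular, your observation that the admissible pairs $(\rho_P,\rho_Q)$ form a single Weyl orbit neatly replaces the paper's case table (Table~\ref{tab:solutions}). There are, however, two genuine problems at $\gamma=\tfrac12$ and one imprecision elsewhere.

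The imprecision: for $D_5$ with $b=0$ the eigenvalues of $D_5$ are $0,0,a,-a$, so the real $\ad(D_{5,s})$-spectrum is $\{0,\pm a,\pm 2a\}$, not contained in $\{0,\pm 2a\}$ as you claim. For $\gamma\neq\tfrac12$ this is harmless, since $\{a,2a\}$ still cannot accommodate three distinct positive values. But at $\gamma=\tfrac12$ the required eigenvalues collapse to $1,1,2$, and $a=1$ fits; likewise for $D_2,D_3$ with $2a=1$ the eigenvalue $1$ has algebraic multiplicity three. Your spectral count therefore does not exclude these forms at $\gamma=\tfrac12$. The paper closes this by checking that $M_1$ has rank~$9$ in each case, so the \emph{geometric} multiplicity of the eigenvalue $1$ is only one, too small to host linearly independent $P$ and $Q$. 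Your outline does not supply this step.

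The more serious gap is your handling of the $D_1$ case at $\gamma=\tfrac12$. The ``enlarged automorphism group of $\gh_e^+$'' you invoke consists of maps mixing $P^+$ and $Q^+$, but these do \emph{not} commute with $(R_{-1})_*$: since $(R_{-1})_*$ fixes $P^+$ and negates $Q^+$, any $\Phi$ satisfying~\eqref{nochmal} must preserve the $\pm 1$-eigenspaces $\sp\{D^+,P^+\}$ and $\sp\{Q^+,T^+\}$ separately, which forces $\Phi$ back to the diagonal scalings~\eqref{eq:3}. The extra two-dimensional freedom at $\gamma=\tfrac12$ must instead be absorbed into the \emph{conjugation}: here $D=H_{1,0}$ is non-regular, its centralizer in $\Sp(2,\RR)$ is strictly larger than the maximal torus, and the paper exhibits an explicit element $C$ of that centralizer realising the required $\GL(2)$-type action on $\sp\{X_\alpha,X_{\alpha+\beta}\}$ (after a sign change on $P$ to make the determinant positive). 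You should enlarge $B$, not $\Phi$.
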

%
Note that, the choice of a map $\Phi$ as in~\eqref{eq:3} allows to
change $P$ and $Q$ up to a multiplicative constant.
\begin{proof}
First we obtain by straightforward computation 
\begin{enumerate}[(i)]
 \item 
  for $ 0< \gamma \leq \tfrac{1}{2}$, 
  $\Phi$  in \eqref{eq:3} with $u=z=-1$ and 
  $
    B 
  :=
    \left(
    \begin{smallmatrix}
      0 & 0 & -1 & 0 \\
      0 & 1 &  0 & 0 \\
      1 & 0 &  0 & 0 \\
      0 & 0 &  0 & 1
    \end{smallmatrix}
    \right)
  \in
    \Sp(2,\RR)
  $
that
\begin{equation}\label{eq:representation_extended_Heisenbergalgebra_gamma0}
\begin{array}{ll}
    \ad(B)\Phi D^+ 
  = 
    H_{1,0} + (1-2\gamma) H_{0,1}, 
  &\; 
    \ad(B)\Phi P^+
  = 
    X_{\alpha+\beta}, 
  \\
    \ad(B)\Phi Q^+
  =
    X_{\alpha}, 
  &\;
    \ad(B)\Phi T^+
  = 
    -X_{2\alpha+\beta}.
\end{array}
\end{equation}
\item 
  for $\tfrac{1}{2} \leq\gamma < 1$, 
  $\Phi$ in \eqref{eq:3} with $u=1$, $z=-1$ and
  $B := - J$
  that
  \begin{equation}\label{eq:representation_extended_Heisenbergalgebra_gamma1}
  \begin{array}{ll}
      \ad(B)\Phi D^+
    =
      H_{1,0} - (1-2\gamma) H_{0,1},
    &\;
      \ad(B)\Phi P^+ 
    =
      X_{\alpha},
    \\
      \ad(B)\Phi Q^+  
    =
      X_{\alpha + \beta},
    &\;
      \ad(B)\Phi T^+ 
    =
      X_{2\alpha+\beta}.
  \end{array}
  \end{equation}
\end{enumerate}                 
We show that, up to conjugation and change of sign in the
definition of $P$ and $Q$, the matrices in 
\eqref{eq:representation_extended_Heisenbergalgebra_gamma0} 
and 
\eqref{eq:representation_extended_Heisenbergalgebra_gamma1}
are the unique ones which fulfill \eqref{ext_H_rel_gamma_g}.

Let $D,P,Q,T\in \sptwor$ be arbitrarily fixed, linearly independent matrices with property \eqref{ext_H_rel_gamma_g}.
By Corollary~\ref{wil}, up to conjugation, $D$ is one of the canonical
forms in $\mathcal{N}$.   
For each $D \in \mathcal{N}$ we have to find $P, Q\in\sptwor$
fulfilling in particular $[D,P] = 2(1-\gamma) P$ and $[D,Q] = 2\gamma Q$.
For this purpose we consider for all $\gamma \in (0,1)$ the pairs of vector spaces
\begin{equation*}
  V_\Gamma
:=
  \{
    X\in\sptwor
    :
    [D,X]=\Gamma X 
  \}, 
\qquad 
  \Gamma \in \{2(1-\gamma), 2\gamma\},
  \end{equation*}
which coincide in the case $\gamma = \tfrac{1}{2}$.
We compute the Lie brackets of every $D \in \mathcal{N}$ 
with the basis elements in \eqref{basis},
i.e., we use Table~\ref{tab:commutator_relations} to find 
$[D,B_k] = \sum_{j=1}^{10} d_{k j} B_j$, $k=1,\ldots,10$.
Then we obtain for any 
$X := \sum_{k=1}^{10} x_k B_k \in \sptwor$  that
\begin{align*}
  [D,X] - \Gamma X 
&= 
  \sum_{k=1}^{10} x_k [D,B_k] - \Gamma \sum_{j=1}^{10} x_j B_j
\\ 
&=
  \sum_{k=1}^{10} 
  \sum_{j=1}^{10} 
    x_k d_{k j} B_j - \Gamma \sum_{j=1}^{10} x_j B_j
\\
&=
  \sum_{j=1}^{10} \left( \sum_{k=1}^{10} (d_{k j} - \Gamma\delta_{kj}) x_k \right) B_j
\end{align*}
where $\delta_{kj} = 1$ for $k=j$ and $\delta_{kj}=0$ otherwise
(Kronecker delta).
Hence $[D,X] = \Gamma X$ is equivalent to $M_\Gamma x = 0$, where $x := (x_k)_{k=1}^{10}$ and
\begin{equation} \label{matrix_M}
  M_\Gamma 
:= 
  (d_{k j} - \Gamma \delta_{kj})_{j,k=1}^{10}.
\end{equation}
To have non-trivial linearly independent solutions $X$ (for $P$ and $Q$) we need that 
$M_\Gamma$ has rank $\leq 9$ for each $\Gamma \in  \{2(1-\gamma), 2\gamma\}$ if $\gamma \not = \frac{1}{2}$
and rank $\leq 8$ if $\gamma = \frac{1}{2}$.
For the seven matrices $D \in \mathcal{N}$ the 
brackets $[D,B_k]$, $k=1,\ldots,10$, 
the corresponding matrices $M_\Gamma$ and their determinants 
are listed in the appendix. 
Using these computations we have the following cases:
\begin{enumerate}[1.]
\item
  For $D \in \{D_4,D_6,D_7\}$ we see immediately that $\det \, M_\Gamma \neq 0$ for $\Gamma \in  \{2(1-\gamma), 2\gamma\}$, $\gamma \in (0,1)$ 
  so that the matrices have full rank.
\item
  For $D \in \{D_2,D_3\}$ only $2a = 2(1-\gamma) = 2 \gamma$ leads to $\det \, M_\Gamma = 0$, $\Gamma \in \{2(1-\gamma), 2\gamma\}$.
  This implies $\gamma = \tfrac{1}{2}$ and $M_{2\gamma}=M_{2(1-\gamma)}= M_1$. But $M_1$ has rank $9$ in both cases $D \in \{D_2,D_3\}$.
\item
  For $D=D_5$ we obtain $\det \, M_\Gamma = 0$ for $\Gamma \in \{2(1-\gamma), 2\gamma\}$ in the following cases:
  \begin{enumerate}[{3}.1]
  \item 
    $2a = 2(1-\gamma) = 2\gamma$ or $a = 2(1-\gamma) = 2\gamma$ and $b=0$, which implies $\gamma =  \tfrac{1}{2}$. 
    However, as in the previous case, $M_1$ has rank $9$.
  \item
    For $b = 0$: $2a = 2 (1-\gamma)$ and $a = 2\gamma$ or $2a = 2 \gamma$ and $a = 2(1-\gamma)$
    which is only possible if $\gamma = \tfrac{1}{3}$ or $\gamma = \tfrac{2}{3}$. 
    But for these cases the second or third column of $M_\Gamma$ is zero 
    so that $P$ would be a multiple of $X_\beta$ and $Q$ a multiple of $X_{\alpha+\beta}$ (or vice versa) but these basis elements commute.
  \end{enumerate}
\item
  Finally, for $D = D_1$, the matrix $M_\Gamma$ is a diagonal matrix with diagonal entries
  \begin{small}
  \begin{equation*}
  \left(
    a_1 - a_2 - \Gamma,
    2a_2 - \Gamma,
    a_1 + a_2 - \Gamma,
    2a_1 - \Gamma,
    a_2 - a_1 - \Gamma,
    - 2a_2 - \Gamma,
    - a_2 - a_1 - \Gamma,
    - 2a_1 - \Gamma,
    - \Gamma,
    - \Gamma
  \right).
  \end{equation*}
  \end{small}
  \begin{table}[htbp]
    \centering
    \begin{tabular}{l|r|rrr} \toprule
      $[\,\cdot\,,\,\cdot\,]$ & $X_\alpha$             & $X_\beta$          & $X_{\alpha+\beta}$    & $X_{2\alpha + \beta}$ \\\midrule
      $X_\alpha$              & $0$                    & $X_{\alpha+\beta}$ & $X_{2\alpha + \beta}$ & $0$                   \\\midrule
      $X_\beta$               & $-X_{\alpha+\beta}$    & $0$                & $0$                   & $0$                   \\
      $X_{\alpha+\beta}$      & $-X_{2\alpha + \beta}$ & $0$                & $0$                   & $0$                   \\
      $X_{2\alpha + \beta}$   & $0$                    & $0$                & $0$                   & $0$                   \\\bottomrule
    \end{tabular}
    \caption{Commutator relations $[B_i,B_j]$ for $B_i,B_j \in\cB$, $i,j=1,\ldots,4$.}
    \label{tab:commutator_relations_positive}
  \end{table}
  Since $a_1\geq a_2\geq 0$ and $\Gamma > 0$, the last six elements are less than zero so that
  $M_\Gamma x = 0$ implies $x_5 = x_6 = \ldots = x_{10} = 0$. 
  Hence any solution $X$ is a linear combination of at most the first four basis elements.
  To obtain solutions $X$ for $P$ and $Q$ such that $[P,Q] = T \not = 0$ 
  we see from Table~\ref{tab:commutator_relations_positive} that at least one solution $X$ must be
  a nontrivial combination with $B_1 = X_\alpha$, i.e., $x_1\neq 0$.
  Consequently, we need $a_1 - a_2 - \Gamma = 0$ for one $\Gamma \in \{2(1-\gamma), 2\gamma\}$.

  Let $\gamma\neq\frac{1}{2}$.
  Then, for the other choice of $\Gamma$, another diagonal element has to be zero.
  Table~\ref{tab:solutions} shows the corresponding six cases. Note that by setting the first and another diagonal element to zero,
  the values $a_1$ and $a_2$ are uniquely determined by $\gamma$.
  \begin{table}[htbp]
    \centering
  {\small
    \begin{tabular}{ccc|cccc} \toprule
      $a_1$ & $a_2$ & $\Gamma$ & $a_1-a_2-\Gamma$ & $2a_2-\Gamma$ & $a_1+a_2-\Gamma$ & $2a_1-\Gamma$ \\\midrule
      \multirow{2}{*}{$\gamma + 1$} & \multirow{2}{*}{$1 - \gamma$} & $2(1-\gamma)$ & $2 (2 \gamma - 1)$ & $0$ & $2 \gamma$ & $4 \gamma$ \\\cmidrule{3-7}
      & & $2\gamma$ & $0$ & $-2 (2 \gamma - 1)$ & $-2 (\gamma - 1)$ & $2$ \\\midrule
      \multirow{2}{*}{$1$} & \multirow{2}{*}{$1 - 2 \gamma$} & $2(1-\gamma)$ & $2 (2 \gamma - 1)$ & $-2 \gamma$ & $0$ & $2 \gamma$ \\\cmidrule{3-7}
      \multicolumn{2}{c}{\footnotesize($\gamma < \frac{1}{2}$)} & $2\gamma$ & $0$ & $-2 (3 \gamma - 1)$ & $-2 (2 \gamma - 1)$ & $-2 (\gamma - 1)$ \\\midrule
      \multirow{2}{*}{$1 - \gamma$} & \multirow{2}{*}{$1 - 3 \gamma$} & $2(1-\gamma)$ & $2 (2 \gamma - 1)$ & $-4\gamma$ & $-2 \gamma$ & $0$ \\\cmidrule{3-7}
      \multicolumn{2}{c}{\footnotesize($\gamma < \frac{1}{3}$)} & $2\gamma$ & $0$ & $-2 (4 \gamma - 1)$ & $-2 (3 \gamma - 1)$ & $-2 (2 \gamma - 1)$ \\\midrule
      \multirow{2}{*}{$2 - \gamma$} & \multirow{2}{*}{$\gamma$} & $2(1-\gamma)$ & $0$ & $2 (2 \gamma - 1)$ & $2 \gamma$ & $2$ \\\cmidrule{3-7}
      & & $2\gamma$ & $-2 (2 \gamma - 1)$ & $0$ & $-2 (\gamma - 1)$ & $-4(\gamma - 1)$ \\\midrule
      \multirow{2}{*}{$1$} & \multirow{2}{*}{$2 \gamma - 1$} & $2(1-\gamma)$ & $0$ & $2 (3 \gamma - 2)$ & $2 (2 \gamma - 1)$ & $2 \gamma$ \\\cmidrule{3-7}
      \multicolumn{2}{c}{\footnotesize($\gamma > \frac{1}{2}$)} & $2\gamma$ & $-2 (2 \gamma - 1)$ & $2 (\gamma - 1)$ & $0$ & $-2 (\gamma - 1)$ \\\midrule
      \multirow{2}{*}{$\gamma$} & \multirow{2}{*}{$3 \gamma - 2$} & $2(1-\gamma)$ & $0$ & $2 (4 \gamma - 3)$ & $2 (3 \gamma - 2)$ & $2 (2 \gamma - 1)$ \\\cmidrule{3-7}
      \multicolumn{2}{c}{\footnotesize($\gamma > \frac{2}{3}$)} & $2\gamma$ & $-2 (2 \gamma - 1)$ & $4 (\gamma - 1)$ & $2 (\gamma - 1)$ & $0$
      \\\bottomrule
    \end{tabular}
  }
    \caption{Possible solutions for $a_1$ and $a_2$ such that $a_1-a_2-\Gamma = 0$ (the first entry of $M_\Gamma$).}
    \label{tab:solutions}
  \end{table}
  %

  The pairs $(a_1,a_2) \in \{(1-\gamma,1-3\gamma),(\gamma,3\gamma-2)\}$ lead to a solution $X$
  which is a multiple of $X_{2\alpha+\beta}$ and consequently commutes with all four basis elements, see Table~\ref{tab:commutator_relations_positive}.
  This contradicts the requirement $[P,Q] = T \not = 0$.

  For $(a_1,a_2) \in \{(\gamma + 1,1 -\gamma), (2-\gamma,\gamma)\}$ the solutions $X$ (for $P$ and $Q$) are multiples of $X_\alpha$ and $X_\beta$,
  so that by Table~\ref{tab:commutator_relations_positive}, the matrix $T$ becomes a multiple of $X_{\alpha + \beta}$. But this $T$ cannot commute with $P$ and $Q$
  as required by  \eqref{ext_H_rel_gamma+}.

  For $(a_1,a_2) = (1,1-2\gamma)$ with  $\gamma < \tfrac{1}{2}$, $\gamma \not=\frac{1}{3}$
  we obtain (up to multiplication with scalars) the setting \eqref{eq:representation_extended_Heisenbergalgebra_gamma0},
  and for $(a_1,a_2) = (1,2\gamma-1)$ with $\gamma > \tfrac{1}{2}$, $\gamma \not=\frac{2}{3}$ the result \eqref{eq:representation_extended_Heisenbergalgebra_gamma1}.
  For $\gamma \in \left\{\tfrac{1}{3},\tfrac{2}{3}\right\}$ see Remark~\ref{ein_drittel}.

  Let $\gamma=\frac{1}{2}$ which implies $\Gamma = 1$.
  Then, regarding that $x_1 \not = 0$, and consequently $a_1 - a_2 - 1 = 0$, i.e., $a_2 = a_1 - 1$, 
  the first four diagonal elements of $M_1$ must read as
  \begin{equation*}
    (0, 2a_1 - 3, 2a_1 - 2, 2a_1 -1). 
  \end{equation*}  
  The matrix $M_1$ must have rank $\le 8$ and $a_2 \ge 0$. This is only possible if
  $(a_1,a_2) \in \{ (\tfrac32,\tfrac{1}{2}), (1,0) \}$.
  For $(a_1,a_2) = (\tfrac32,\tfrac{1}{2})$ the solutions $X$ for $P$ and $Q$ are multiples of $X_\alpha$ and $X_\beta$.
  But then $T=[P,Q]$ is a multiple of $X_{\alpha + \beta}$ which does not commute with both $X_\alpha$ and $X_\beta$
  as required by \eqref{ext_H_rel_gamma+}.

  For $(a_1,a_2) = (1,0)$ we obtain the solution
  \begin{equation*}
    D
  =
    H_{1,0} 
  \qquad 
    P
  =u  X_{\alpha} + v  X_{\alpha+\beta},
  \qquad 
    Q
  =  w X_{\alpha} + z X_{\alpha+\beta},
  \qquad 
    T
  = (uz-vw)
    X_{2\alpha+\beta},
  \end{equation*}
  where  $uz-wv\neq 0$ must be fulfilled.  Possibly changing $P$ into
  $-P$, we can assume that $uz-wv>0$. 
  Now straightforward computation shows that
  \begin{equation*}
    C
  := 
    \frac{1}{m}
    \begin{pmatrix}
      m^2 &  0 & 0&  0 \\
      0   &  z & 0& -v \\
      0   &  0 & 1&  0 \\
      0   & -w & 0&  u
    \end{pmatrix}, 
  \qquad 
    m 
  := 
    \sqrt{uz-vw}
  \end{equation*}
  is a symplectic matrix which fulfills
  \begin{align*}
    \ad(C) H_{1,0}= H_{1,0} ,\quad
    \ad(C) X_\alpha =  u X_\alpha + v X_{\alpha+\beta}, \quad
    \ad(C)X_{\alpha+\beta}  =  w X_\alpha + z X_{\alpha+\beta}.
  \end{align*}
\end{enumerate}
This finishes the proof.
\end{proof}
%
\begin{remark}\label{ein_drittel}
For $\gamma=\frac{1}{3}$ (and $\gamma=\frac{2}{3}$) there are 'non-standard'
embeddings $\widetilde{\kappa}$ of 
$\SS^+$ into $\Sp(2,\RR)$, which are not conjugated
with~\eqref{eq:definition_g_+} unless $w=0$.  At the Lie algebra level, 
$\widetilde{\kappa}_*\colon\gh_e^+\to\sptwor$
acts as
\begin{equation*}
  \widetilde{\kappa}_*D^+= H_{1,0} + \frac{1}{3}H_{0,1}
\qquad
  \widetilde{\kappa}_*P^+=u X_{\alpha + \beta} 
\qquad
  \widetilde{\kappa}_*Q^+=v X_{\alpha} + wX_{\beta}
\qquad 
  \widetilde{\kappa}_*Z^+= -uvX_{2\alpha+\beta}.
\end{equation*}
However, it is easy to check that for such embeddings there does not
exist a symplectic matrix $A$ satisfying~\eqref{eq:Aconjugation}.
For further explanations we refer to \cite{Hae14}.
\end{remark}

%
\section{Coorbit spaces for equivalent representations} \label{sec5}
In this section we show the relation between the coorbit spaces of isomorphic groups with equivalent representations
and apply it to our setting.
We briefly introduce the general coorbit space theory and describe how isomorphic groups with equivalent representations lead to isomorphic
scales of coorbit spaces.
Then we specify the results for our connected shearlet and shearlet Toeplitz groups and their isomorphic subgroups of the symplectic group.
Since the latter ones are equipped with a metaplectic representation this leads finally to metaplectic coorbit spaces.
%
\subsection{Coorbit spaces} \label{subsec:coorbit_spaces}
Let $G$ be a locally compact group with left Haar measure $d \mu$.
A \emph{unitary representation} of $G$ on a Hilbert space $\cH$ is a homomorphism $\pi$ from $G$ 
into the group $\cU(\cH)$ of unitary operators on $\cH$ 
that is continuous with respect to the strong operator topology, see \cite{Fol95}. 
A representation is called \emph{irreducible} if there does not exist a nontrivial $\pi$-invariant subspace of $\cH$.
A unitary, irreducible representation $\pi$ fulfilling
\begin{equation}\label{eq:admissibility}
  \int_G \lvert\langle \psi,\pi(g)\psi\rangle\rvert^2
  d\mu (g)
<
  \infty
\end{equation}
for some $\psi\in\cH$ is called \emph{square integrable} and a function fulfilling \eqref{eq:admissibility} \emph{admissible}. 
Assume that there exists a square integrable representation $\pi$ of $G$.
For an admissible function $\psi\in\cH$ the mapping $V_\psi\colon\cH\to L_2(G)$ with 
$V_\psi(f)(g) := \langle f,\pi(g)\psi\rangle$ 
is known as  \emph{voice transform} of $f$ (with respect to $\psi$).
The admissibility condition \eqref{eq:admissibility} is important since it yields 
to a resolution of the identity that allows the reconstruction of a function $f\in\cH$ from its voice transform 
$(\langle f,\pi(g)\psi \rangle)_{g\in G}$. 
Using the voice transform we can reformulate the admissibility condition \eqref{eq:admissibility} as $V_\psi(\psi)\in L_2(G)$.

For a general real-valued weight $\mathrm{w}$ and $1 \leq p \leq \infty$ we define the weighted $L_p$ space on $G$ as
\begin{equation*}
  L_{p,\mathrm{w}}(G)
:=
  \{
    F\text{ measurable} : F \mathrm{w}\in L_p(G)
  \}.
\end{equation*}
Further we will need the weighted sequence spaces 
\begin{equation*}
  \ell_{p,{\rm w}}
:=
  \{
    (c_i)_{i\in\cI}
    :
    (c_i \mathrm{w_i})_{i\in\cI}
    \in
    \ell_p
  \}.
\end{equation*}
The voice transform can be extended from the Hilbert space ${\mathcal H}$  to weighted Banach spaces of distributions using coorbit space
theory.
This theory was developed by Feichtinger and Gr\"ochenig in a series of papers \cite{FG88,FG89a,FG89b,Gro91,GP09} and we collect the basic ideas
in the following.
Let now $w$ be a real-valued, continuous and submultiplicative weight on $G$, i.e.,
$w(gh) \leq w(g)w(h)$ for all $g,h \in G$ fulfilling in addition the 
conditions stated in \cite[Section 2.2]{Gro91}.
We assume that the so-called \emph{set of analyzing vectors} 
\begin{equation} \label{A_w}
  \cA_w
:=
  \{ \psi\in\cH : V_\psi(\psi)\in L_{1,w}(G) \}.
\end{equation}
is nonempty and fix a nontrivial function $\psi\in \cA_w$.
We can define a set of \emph{test functions} and equip it with a norm such that it becomes a Banach space by
\begin{equation*}
  \cH_{1,w}
:=
  \{ f\in\cH : V_\psi(f) \in L_{1,w}(G) \}, \quad \lVert f\rVert_{\cH_{1,w}}:= \lVert V_\psi(f)\rVert_{L_{1,w}(G)}.
\end{equation*}
Its anti-dual space, i.e., the space of all continuous conjugate-linear functionals on $\cH_{1,w}$, also called space of \emph{distributions}, 
is denoted by $\cH_{1,w}^{\sim}$.
The definitions of $\cH_{1,w}$ and $\cH_{1,w}^{\sim}$ are independent
of the choice of the analyzing vector $\psi\in\cA_w$, see \cite[Lemma
4.2]{FG88}, in particular $\cH_{1,w} = \cA_w$ as sets.
The spaces $\cH_{1,w}$ and $\cH_{1,w}^{\sim}$ are $\pi$-invariant Banach spaces with continuous embeddings $\cH_{1,w}\hookrightarrow\cH\hookrightarrow\cH_{1,w}^{\sim}$.
The inner product on $\cH\times \cH$ extends to a sesquilinear form on $\cH_{1,w}^{\sim}\times\cH_{1,w}$: for $\psi\in\cH_{1,w}$ and $f\in\cH_{1,w}^{\sim}$ the extended representation coefficients
\begin{equation*}
  V_\psi(f)(g)
:=
  \langle
    f,\pi(g)\psi
  \rangle_{\cH_{1,w}^{\sim}\times\cH_{1,w}}
\end{equation*}
are well-defined and provide the desired generalization of the \emph{voice transform on} $\cH_{1,w}^{\sim}$.

Let $m$ be a $w$-moderate weight on $G$ which means that $m(xyz) \leq w(x)m(y)w(z)$ for all $x,y,z\in G$.
The \emph{coorbit space} of $L_{p,m}(G)$ is given by
\begin{equation*}
  \Co(L_{p,m}(G))
:=
  \cH_{p,m}
:=
  \{
    f\in\cH_{1,w}^{\sim} 
    :
    V_\psi(f)
    =
    \langle 
    f,\pi(\,\cdot\, )\psi 
    \rangle_{\cH_{1,w}^{\sim} \times \cH_{1,w}} \in L_{p,m}(G)\}
\end{equation*}
with norm 
$\lVert f\rVert_{\cH_{p,m}} 
= 
\lVert \langle 
f,\pi(\,\cdot\, )\psi 
\rangle_{\cH_{1,w}^{\sim} \times \cH_{1,w}}\rVert_{L_{p,m}(G)}$.
It is a \emph{$\pi$-invariant Banach space} which does not depend
on the choice of the analyzing vector $\psi\in \cA_w$,
see \cite[Theorem 4.2]{FG89a}. 
In particular, the spaces $\cH$, $\cH_{1,w}$ and $\cH_{1,w}^{\sim}$ 
can be identified with the following coorbit spaces
\begin{equation*}
  \cH
=
  \Co(L_2(G)),
\quad
  \cH_{1,w}
=
  \Co(L_{1,w}(G)),
\quad\text{and}\quad
  \cH_{1,w}^{\sim}
=
  \Co(L_{\infty,\frac{1}{w}}(G)).
\end{equation*}
To establish atomic decompositions and Banach frames for coorbit spaces we need 
(i) a stronger integrability condition for the analyzing functions than \eqref{A_w}, and (ii)
a reasonable discretization of our group $G$.
Concerning (i) we require that the following \emph{better subset} (or \emph{set of basic atoms}) is nontrivial
\begin{equation*}
  \cB_w
:=
  \{
    \psi\in\cH
    :
    V_\psi(\psi)\in\cW^{L}(L_\infty(G),L_{1,w}(G))
  \}
\end{equation*}
where 
\begin{equation*}
  \cW^{L}(L_\infty(G),L_{1,w}(G)) 
:= 
  \{ 
    F\in L_{\infty,\text{loc}} 
  : 
    H_F
    \in 
    L_{1,w}(G)
  \}
\end{equation*}
and $H_F\colon G\to\RR$ is given by 
$
  H_F(x)
:=
 \lVert (L_x\chi_Q) F\rVert_\infty 
= 
  \sup_{y\in xQ} \lvert F(y)\rvert 
$,
with $Q$ being a relatively compact neighborhood of the identity element $e\in G$. Then we choose $0 \not = \psi \in \cB_w$.
With respect to (ii) we assume that $G$ can be discretized
on a so-called \emph{well-spread set}. A (countable) family $X = \{g_i : i \in \cI \}$ in $G$ is called \emph{well-spread}
if $\bigcup_{i\in\cI} g_i U = G$ for some compact set $U$ with non-void interior, and
 if for all compact sets $K\subset G$ there exists a constant $C_K$ such that
  \begin{equation*}
    \sup_{j\in\cI} \#\{i\in \cI : g_i K\cap g_j K \neq \emptyset\}\leq C_K.
  \end{equation*}
The following theorem collects results about the existence of atomic decompositions and Banach frames from \cite{Gro91,DKST09,DHST13}.
\begin{theorem}
  Let $1 \leq p \leq \infty$ and
  $\psi \in {\mathcal B}_w$, $\psi \neq 0$.
  Then there exists a (sufficiently small) neighborhood $U\subset G$ of $e$ such that for
  any well-spread set $X=\{g_i : i\in \cI\}$ in $G$
  the set
  $\{\pi(g_i) \psi : i\in\cI\}$ provides an \emph{atomic decomposition} and a \emph{Banach frame} for
  $\cH_{p,m}$.
  \\
  \emph{Atomic decomposition.}
    Every
    $f \in \cH_{p,m}$ possesses an expansion
    \begin{equation*}
      f 
    = 
      \sum_{i \in \cI} c_i(f)\pi(g_i) \psi,
    \end{equation*}
    where the sequence of coefficients $(c_i(f))_{i\in\cI}$ depends linearly on $f$ and satisfies
    \begin{equation*}
      \lVert (c_i (f))_{i \in \cI}\rVert_{\ell_{p,m}}
    \leq C
      \lVert f\rVert_{\cH_{p,m}}
    \end{equation*}
    with a constant $C$ only depending on $\psi$.
    Conversely, if $(c_i)_{i \in \cI} \in \ell_{p,m}$,
    then $f=\sum_{i \in \cI} c_i \pi(g_i) \psi$ is in
    $\cH_{p,m}$  and
    \begin{equation*}
      \lVert f\rVert_{\cH_{p,m}} 
    \leq 
      C'\lVert (c_i)_{i \in \cI}\rVert_{\ell_{p,m}}.
    \end{equation*}
	\emph{Banach frames.}
    The set $\{\pi(g_i)\psi: i \in \cI \}$
    is a  Banach frame for $\cH_{p,m}$ which means that
     there exist two constants $C_1, C_2>0$ depending only on $\psi$ such that
    \begin{equation*}
      C_1\lVert f\rVert_{\cH_{p,m}} 
    \leq
      \lVert
        (
          \langle 
            f, \pi(g_i)\psi
          \rangle_{\cH_{1,w}^{\sim}\times \cH_{1,w}}
        )_{i \in \cI}
      \rVert_{\ell_{p,m}}
    \leq
      C_2\lVert f\rVert_{\cH_{p,m}},
    \end{equation*}
    and there exists a bounded, linear reconstruction operator
    $\cR$ from $\ell_{p,m}$ to $\cH_{p,m}$ such that
        \begin{equation*}
          \cR\bigl(
            (\langle 
              f,\pi(g_i)\psi 
            \rangle_{\cH_{1,w}^{\sim}\times\cH_{1,w}}
            )_{i\in\cI}
            \bigr)
        =
          f.
        \end{equation*}
  \end{theorem}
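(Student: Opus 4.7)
The plan is to deploy the standard Feichtinger--Gröchenig discretization machinery. The analytic engine is the reproducing formula $V_\psi f = V_\psi f \ast V_\psi \psi$ (left convolution on $G$), which holds on $\cH$ for admissible $\psi$ and extends by continuity to $\cH_{1,w}^{\sim}$, hence to every coorbit space $\cH_{p,m}$. The hypothesis $\psi \in \cB_w$ places the reproducing kernel $V_\psi\psi$ in the Wiener amalgam space $\cW^{L}(L_\infty(G),L_{1,w}(G))$, which acts continuously by convolution on $L_{p,m}(G)$ via a weighted Young-type inequality. This amalgam convolution inequality is the single tool that transfers all the continuous estimates to discrete ones.

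Given a well-spread family $X=\{g_i\}_{i\in\cI}$ with a relatively compact neighborhood $U$ of $e$ satisfying $\bigcup_i g_i U = G$, I would fix a bounded uniform partition of unity $\{\phi_i\}_{i\in\cI}$ with $\supp\phi_i\subset g_i U$. Two approximation operators then arise: a quadrature-type operator $T_U f := \sum_{i\in\cI}\langle V_\psi f,\phi_i\rangle\, \pi(g_i)\psi$, which produces the atomic decomposition, and a sampling-type operator built from the point values $V_\psi f(g_i)$ convolved against $V_\psi\psi$ through the partition of unity, which underlies the Banach frame statement. The common strategy is to show that each operator is a small perturbation of the identity on $\cH_{p,m}$ when $U$ is small, invert it by a Neumann series, and read off the coefficients or the dual frame.

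The main obstacle, and the precise reason $U$ must be chosen small, is the quantitative control of $I - T_U$ on $\cH_{p,m}$. After using the reproducing formula and the partition of unity, this reduces to estimating the $U$-oscillation $\osc_U(V_\psi\psi)(g):=\sup_{u\in U}\lvert V_\psi\psi(ug)-V_\psi\psi(g)\rvert$ in $\cW^{L}$, together with the convergence $\lVert\osc_U(V_\psi\psi)\rVert_{\cW^{L}}\to 0$ as $U$ shrinks to $\{e\}$. This oscillation-convergence step is the delicate part: it follows from strong continuity of $\pi$ combined with a dominated convergence argument in the amalgam norm, made available precisely by $\psi\in\cB_w$. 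The amalgam convolution inequality then yields $\lVert(I-T_U)f\rVert_{\cH_{p,m}} \leq C\lVert\osc_U(V_\psi\psi)\rVert_{\cW^{L}}\, \lVert f\rVert_{\cH_{p,m}}$; choosing $U$ so that this constant is strictly less than $1$ makes $T_U$ invertible by a Neumann series and produces atomic coefficients $c_i(f):=\langle V_\psi T_U^{-1}f,\phi_i\rangle$, linear in $f$ with the claimed $\ell_{p,m}$ norm bound.

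The converse of the atomic decomposition, that any $(c_i)\in\ell_{p,m}$ yields an element of $\cH_{p,m}$, is a direct application of the same amalgam convolution inequality to the expansion $\sum c_i\pi(g_i)\psi$, this time using $V_\psi\psi$ itself instead of its oscillation. For the Banach frame part, I would invert the sampling operator $f\mapsto (V_\psi f(g_i))_{i\in\cI}$ on its range by the analogous Neumann series, yielding the bounded reconstruction operator $\cR$; the upper frame bound is the boundedness of sampling (which again uses $\psi\in\cB_w$), the lower frame bound is invertibility. The technically delicate ingredient throughout is the oscillation estimate in $\cW^{L}$; everything else amounts to careful but essentially formal bookkeeping once that is in hand.
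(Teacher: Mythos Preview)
Your outline is correct and is precisely the standard Feichtinger--Gr\"ochenig argument. Note, however, that the paper does not give its own proof of this theorem at all: it is stated without proof and attributed to \cite{Gro91,DKST09,DHST13}, so there is no ``paper's approach'' to compare against beyond those references, whose strategy you have accurately summarized.
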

%
\subsection{Coorbit spaces for isomorphic groups and equivalent representations} \label{subsec:iso}
%
Let $G$ and $\widetilde{G}$ be locally compact groups with left Haar measures $d\mu$ and $d \widetilde{\mu}$, respectively, 
which are isomorphic with isomorphism $\iota\colon G\to \widetilde{G}$. 
Further, let $\cH$ and $\widetilde{\cH}$ be Hilbert spaces with isometric isomorphism $\Psi\colon \cH \to \widetilde{\cH}$.
Let $\pi\colon G\to\cU(\cH)$ and $\widetilde{\pi}\colon \widetilde{G}\to\cU(\widetilde{\cH})$ 
be unitary representations of $G$ on $\cH$ and of $\widetilde{G}$ on $\widetilde{\cH}$, respectively,
so that for all $g \in G$ and all $f \in \cH$
\begin{equation} \label{main_iso}
  \Psi(\pi(g) f) 
= 
  \widetilde{\pi} \left( \iota(g) \right) (\Psi f) .
\end{equation}
We will refer to such $\pi$ and $\widetilde{\pi}$ as equivalent representations.
Setting $\widetilde{f} := \Psi f$, i.e., $f = \Psi^{-1} \widetilde{f}$ this can be rewritten as
$
\Psi \left( \pi(g) \Psi^{-1} \widetilde{f} \right) = \widetilde{\pi} \left( \iota(g) \right) \widetilde{f}
$.
The following diagram illustrates the relations.
\begin{center}
\begin{tikzpicture}[node distance = 3cm, auto]
  \node (UH) {$\mathcal{U}(\mathcal{H})$};
  \node (UTH) [right = 3.5cm of UH] {$\mathcal{U}(\widetilde{\mathcal{H}})$};
  \node (G) [below =2.1cm of UH] {$G$};
  \node (TG) [below =2cm of UTH] {$\widetilde{G}$};
  \draw[->] (G) to node {$\pi$} (UH);
  \draw[->] (UH) to node {$\Psi(\,\cdot\,)\Psi^{-1}$} (UTH);
  \draw[->] (G) to node {$\iota$} (TG);
  \draw[->] (TG) to node [swap] {$\widetilde{\pi}$} (UTH);
\end{tikzpicture}
\end{center}
Assume that $G$, $\cH$ and $\pi$ give rise to 
a sequence of coorbit spaces including atomic decompositions 
and Banach frames as described in the previous Subsection~\ref{subsec:coorbit_spaces}. 
For the corresponding weights and function spaces we use the notation from Subsection~\ref{subsec:coorbit_spaces}.
We are interested in the relation to the coorbit spaces based on  
$\widetilde{G}$, $\widetilde{\mathcal{H}}$ and $\widetilde{\pi}$ in terms of $\iota$ and $\Psi$.
 
First, we define $\widetilde{w} := w \circ \iota^{-1}$  which is clearly a real-valued, continuous, submultiplicative weight on $\widetilde{G}$ 
satisfying the conditions in \cite[Section 2.2]{Gro91}, since $w$ does.
Let
\begin{equation*}
   \widetilde{\cA}_{\widetilde{w}}
:=
  \left\{
    \widetilde{\psi} \in \widetilde{\cH}
    : 
     \langle 
      \widetilde{\psi},\widetilde{\pi}(\widetilde{g})\widetilde{\psi} 
    \rangle_{\widetilde{\cH}}  
    \in L_{1,\widetilde{w}}(\widetilde{G})
  \right\}.
\end{equation*}
Setting $\widetilde{\psi} := \Psi \psi$ for $\psi \in \cA_w$
we obtain by \eqref{main_iso} and since $\Psi$ is an isometry 
\begin{equation*}
 \langle \psi, \pi(\iota^{-1}(\widetilde{g}))\psi \rangle_\cH
=
\langle \Psi \psi, \Psi \left( \pi(\iota^{-1}(\widetilde{g}))\psi \right)\rangle_{\widetilde{\cH}}
=
  \langle 
    \widetilde{\psi}, \widetilde{\pi}(\widetilde{g})\widetilde{\psi} 
  \rangle_{\widetilde{\cH}} .
\end{equation*}
Thus, $\cA_w$ and $\widetilde{\cA}_{\widetilde{w}}$ are isomorphic. 
For an analyzing vector 
$\widetilde{\psi} = \Psi \psi \in \widetilde{\cA}_{\widetilde{w}}$
we introduce the set of \emph{test functions}
\begin{equation*}
  \widetilde{\cH}_{1,\widetilde{w}}
:=
  \left\{ 
    \widetilde{f}\in\cH 
    : 
    \langle \widetilde{f}, \widetilde{\pi}(\widetilde{g})\widetilde{\psi} \rangle 
    \in L_{1,\widetilde{w}}(\widetilde{G})
  \right\}.
\end{equation*}
Since 
$
  \lVert f \rVert_{\cH_{1,w}} 
=
  \lVert \langle f,\pi(g)\psi \rangle \rVert_{L_{1,w}(G)}
=
  \lVert \langle \Psi f, \widetilde{\pi} (\iota(g) ) \Psi \psi \rangle \rVert_{L_{1,w}(G)}
$
we see with $\widetilde{g} = \iota(g)$ and $\widetilde{\psi} = \Psi \psi$ that 
\begin{equation*}
  \lVert f \rVert_{\cH_{1,w}} 
= 
  \lVert \langle f,\pi(g)\psi \rangle \rVert_{L_{1,w}(G)}
=
  \lVert \langle \widetilde{f},\widetilde{\pi}(\widetilde{g})\widetilde{\psi} \rangle \rVert_{L_{1,\widetilde{w}}(\widetilde{G})}
=
  \lVert \widetilde{f} \rVert_{\widetilde{\cH}_{1,\widetilde{w}}} .
\end{equation*}
As for the set of analyzing vectors $\Psi$ induces an isomorphism from $\cH_{1,w}$ onto $\widetilde{\cH}_{1,\widetilde{w}}$, where 
$\widetilde{f} = \Psi f$. 
Let $(\widetilde{\cH}_{1,\widetilde{w}})^{\sim}$ denote the anti-dual space of  $\widetilde{\cH}_{1,\widetilde{w}}$.
This space is related to  $\cH_{1,w}^{\sim}$ by $\Psi^* \widetilde{f} = f$ for all 
$\widetilde{f} \in (\widetilde{\cH}_{1,\widetilde{w}})^{\sim}$,
where 
$\Psi^*\colon (\widetilde{\cH}_{1,\widetilde{w}})^{\sim} \rightarrow \cH_{1,w}^{\sim}$ denotes the adjoint of the isomorphism
$\Psi\colon \cH_{1,w} \rightarrow \widetilde{\cH}_{1,\widetilde{w}}$.
The relations are illustrated in the following diagram.
\begin{center}
\begin{tikzpicture}[node distance = 2cm, auto]
  \node (H1w) {$\cH_{1,w}$};
  \node (H) [right = 2cm of H1w] {$\cH$};
  \node (DH1w) [right = 2cm of H] {$\cH_{1,w}^{\sim}$};
  \node (TH1w) [below =2cm of H1w]{$\widetilde{\cH}_{1,\widetilde{w}}$};
  \node (TH) [right = 2cm of TH1w] {$\widetilde{\cH}$};
  \node (TDH1w) [below = 2cm of DH1w] {$(\widetilde{\cH}_{1,\widetilde{w}})^{\sim}$};
  \draw[->] (H1w) to node {$\Psi$} (TH1w);
  \draw[->] (H) to node {$\Psi$} (TH);
  \draw[->] (TDH1w) to node {$\Psi^*$} (DH1w);
  \draw[right hook->] (H1w) -- (H);
  \draw[right hook->] (H) -- (DH1w);
  \draw[right hook->] (TH1w) -- (TH);
  \draw[right hook->] (TH) -- (TDH1w);
\end{tikzpicture}
\end{center}
The inner product on $\widetilde{\cH}$ extends to a dual pairing on $(\widetilde{\cH}_{1,\widetilde{w}})^{\sim} \times \widetilde{\cH}_{1,\widetilde{w}}$ 
by
\begin{align} 
\langle 
    \widetilde{f}, 
    \widetilde{\pi}(\widetilde{g}) \widetilde{\psi} 
  \rangle_{(\widetilde{\cH}_{1,w})^{\sim} \times \widetilde{\cH}_{1,w}}
  &=
  \langle 
    \widetilde{f}, 
    \Psi \left( \pi(g) \psi \right)
  \rangle_{(\widetilde{\cH}_{1,w})^{\sim} \times \widetilde{\cH}_{1,w}}
  =
  \langle 
    \Psi^* \widetilde{f},
    \pi(g)\psi 
  \rangle_{\cH_{1,w}^{\sim} \times \cH_{1,w}} \nonumber \\
  &=
 \langle 
    f,
    \pi(g)\psi 
  \rangle_{\cH_{1,w}^{\sim} \times \cH_{1,w}}. \label{2_voice}
\end{align}
It follows that both lines of the diagram are isomorphic Gelfand triples 
and the extended voice transforms coincide in the sense of \eqref{2_voice}.
To the $w$-moderate weight function $m$ on $G$ we associate the  $\widetilde{w}$-moderate function $\widetilde{m}$
on $\widetilde{G}$
by $\widetilde{m}(\widetilde{g}) := m(\iota^{-1}(\widetilde{g}))$.
Now we are ready to define the coorbit spaces of $ L_{p,\widetilde{m}}(\widetilde{G})$ by
\begin{align*}
   \widetilde{\cH}_{p,\widetilde{m}}
&:=
  \left\{
    \widetilde{f}\in(\widetilde{\cH}_{1,w})^{\sim} 
    : 
    \langle 
      \widetilde{f},\widetilde{\pi}(\,\cdot\, )\widetilde{\psi} 
    \rangle_
    {(\widetilde{\cH}_{1,w})^{\sim} \times \widetilde{\cH}_{1,w}} \in L_{p,\widetilde{m}}(\widetilde{G})
  \right\}
\end{align*}
which are isomorphic to $\cH_{p,m}$ with isomorphism $\widetilde{f} = \Psi f$. 

Finally, we want to analyze the relation between atomic decompositions and Banach frames of the isomorphic coorbit spaces 
$\cH_{p,m}$ and $\widetilde{\cH}_{p,\widetilde{m}}$. 
Clearly, the set $\widetilde{\cB}_{\widetilde{w}}$ is just given by $\Psi(\cB_w)$.
Further, given a well-spread set $X := \{g_i : i\in\cI\}$ of $G$
we  can check that 
$\widetilde{X} = \iota(X) = \{\widetilde{g_i} = \iota(g_i) : i\in\cI \}$ 
is a well-spread set of $\widetilde{G}$.
Then, we obtain from the atomic decomposition 
\begin{equation*}
  f
=
  \sum_{i\in\cI}
  c_i(f)\pi(g_i)\psi
\end{equation*}
of $f \in \cH_{p,m}$ and the atomic decomposition of 
$\widetilde{f} = \Psi f$ by
\begin{equation*}
 \widetilde{f} 
= 
  \Psi f 
= 
  \Psi \left(\sum_{i\in\cI} c_i(f) \pi(g_i) \psi\right)
=
  \sum_{i\in\cI}
  c_i(f)
  \widetilde{\pi}(\widetilde{g}_i)\widetilde{\psi},
\end{equation*}
i.e., $\widetilde{f}$ can be decomposed using the same sequence of coefficients.
Concerning the Banach frame property of 
$\{\widetilde{\pi}(\widetilde{g}_i)\widetilde{\psi}: i \in \cI \}$  
it remains to deduce the reconstruction operators 
$\widetilde\cR\colon \ell_{p,\widetilde{m}} \rightarrow \widetilde{\cH}_{p,\widetilde{m}}$ 
given a reconstruction operator 
$\cR\colon\ell_{p,m} \rightarrow \cH_{p,m}$.
Let the sequence of moments of $\widetilde{f}$ be given as 
$
\{
  \langle 
    \widetilde{f},\widetilde{\pi}(g_i)\widetilde{\psi}
  \rangle_{
            (\widetilde{\cH}_{1,w})^{\sim} \times \widetilde{\cH}_{1,w}
          }
  \}
  \in\ell_{p,\widetilde{m}}
$. 
Using  $f = \Psi^*\widetilde{f}$ we get
\begin{equation*}
  \langle
    \widetilde{f},
    \widetilde{\pi}(\widetilde{g}_i)\widetilde{\psi}
  \rangle_{(\widetilde{\cH}_{1,w})^{\sim} \times \widetilde{\cH}_{1,w}}  
=
  \langle
    \widetilde{f},
    \Psi \left(\pi(g_i)\psi \right)
      \rangle_{(\widetilde{\cH}_{1,w})^{\sim} \times \widetilde{\cH}_{1,w}} 
=
  \langle 
    \Psi^* \widetilde{f},
    \pi(g_i)\psi 
  \rangle_{\cH_{1,w}^{\sim} \times \cH_{1,w}}
\end{equation*}
and since
$
  \cR(\{\langle \Psi^* \widetilde{f},\pi(g_i)\psi \rangle \}_{i\in\cI}) 
= 
  \Psi^*\widetilde{f}
$ 
that 
$\widetilde{\cR} = (\Psi^*)^{-1}\circ \cR$.
%
\subsection{Application to the connected (Toeplitz) shearlet groups and their isomorphic subgroups in the symplectic group}
In this section we want to use the results from the previous section to establish coorbit spaces 
for the groups $\TDS(d,\RR)$ and $\TDS_T(d,\RR)$ using the coorbit spaces for $\SS^{+}$ and $\SS^{+}_{T}$.
First, we have to determine the latter ones.
\paragraph{Square integrable representations of $\SS^{+}$ and $\SS^{+}_{T}$.}
The coorbit spaces for the full shearlet group $\SS$ and $\cH = L_2(\RR^d)$ 
were introduced in \cite{DKST09,DST10} and for the full Toeplitz shearlet group $\SS_T $ in \cite{DHT12}.
However, we cannot use these results directly since the respective representations are not irreducible if restricted to
the connected groups $\SS^{+}$ and $\SS^{+}_{T}$. 
Therefore, we consider instead of  $L_2(\RR^d)$
the Hilbert space 
\begin{equation*}
    L_2(\Theta_L) 
:= 
  \{ f\in L_2(\RR^d) : \supp \hat{f} \subseteq \Theta_L\},
\end{equation*}
where $\Theta_L$ denotes the halfspace
\begin{equation*} 
  \Theta_L 
:= 
  \{\xi\in\RR^d : \xi_1 \leq 0\}
\end{equation*}
and $\hat f = \cF f$ is the Fourier transform of $f$.

To shorten the notation we write in the following $\SS^{+}_{(T)}$ to address both $\SS^{+}$ and $\SS_{T}^{+}$.
Further, we just use $A$ for the dilations in \eqref{eq:dilationAndShearMatrices} and $S$ for the shears in \eqref{eq:shearmatrices}
for both groups and denote by $\mu$ their left Haar measures.
We define a representation $\pi\colon \SS^{+}_{(T)} \rightarrow \cU(L_2(\Theta_L) )$ by
\begin{equation} \label{eq:shearlet_representation}
  \pi(a,s,t)f(x) = f_{a,s,t}(x):= (\det(A))^{-\frac{1}{2}} f(A^{-1} S^{-1}(x-t))
\end{equation}
for all $(a,s,t)\in\SS_{(T)}^{+}$ and all $f \in L_2(\Theta_L)$.
The Fourier transform $\hat{f}_{a,s,t}$ of $f_{a,s,t}$ is given by
\begin{equation}\label{eq:represenatation_Fourier}
 \hat{f}_{a,s,t} (\omega)
:=
  \hat{\pi}(a,s,t)\hat{f}(\omega) 
= 
  (\det(A))^{\frac{1}{2}}
  \hat{f} (A^\tT S^\tT \omega) 
  e^{-2\pi i \langle t,\omega\rangle}.
\end{equation}
Note that the representations $\pi$ and $\hat{\pi}$ are equivalent in the sense that 
\begin{equation*}
  \cF\pi \cF^{-1}=\hat\pi.
\end{equation*}
Similarly as it is done for the full shearlet and shearlet Toeplitz group in \cite{DKMSST08,DKST09,DST10,DT10}, we can prove 
that $\pi$ is indeed a unitary representation of $\SS^{+}_{(T)}$ on $L_2(\Theta_L)$.

The following lemma shows that the unitary representation $\pi$ defined
in \eqref{eq:shearlet_representation} is also square integrable.
%
\begin{lemma} \label{theo:irreducible}
A function $\psi \in L_2(\Theta_L)$
is admissible if and only if it
fulfills the \emph{admissibility condition}
\begin{equation}\label{eq:admissibility_s}
  0
<
  C_\psi 
:=
  \int_{\Theta_L} \frac{\lvert\hat{\psi}(\omega)\rvert^2}{\lvert\omega_1\rvert^d} \, d  \omega  < \infty.
\end{equation}
Then, for any $f \in L_2(\Theta_L)$ the following equality holds true:
\begin{equation}\label{eq:square_Integrability}
  \int_{\SS^{+}_{(T)}} \lvert\langle f, \psi_{a,s,t} \rangle\rvert^2 \, d \mu (a,s,t)
=  
  C_\psi \, \lVert f\rVert_{L_2(\Theta_L)}^2.
\end{equation}
In particular, the unitary representation $\pi$ is irreducible and hence square integrable.
\end{lemma}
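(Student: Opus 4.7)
The natural approach is to work throughout on the Fourier side, using the equivalent representation $\hat\pi$ of \eqref{eq:represenatation_Fourier}, because the translation parameter $t$ appears there only as a modulation factor and the dilation--shear part is a simple change of variables. My plan is first to establish irreducibility (which makes the square integrability claim meaningful), and then to prove the orthogonality formula \eqref{eq:square_Integrability} by a direct Plancherel-plus-substitution computation; the admissibility characterization will drop out as a corollary.

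For irreducibility, let $M\subseteq L_2(\Theta_L)$ be a closed $\pi$-invariant subspace. Setting $t$ free while keeping $a=1$, $s=0$ shows that $\widehat{M}=\cF M$ is invariant under all modulations $\hat f(\omega)\mapsto \hat f(\omega)e^{-2\pi i\langle t,\omega\rangle}$, equivalently $M$ is translation invariant. By the classical Wiener characterization of translation-invariant subspaces of $L_2(\RR^d)$, $\widehat{M}=L_2(F)\cap L_2(\Theta_L)$ for some measurable $F\subseteq\Theta_L$. Invariance under the rest of $\pi$ forces $F$ to be invariant (up to null sets) under the action $\omega\mapsto A^\tT S^\tT\omega$. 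A direct check using the explicit form of $A^\tT S^\tT\omega$ shows that for any $\omega$ with $\omega_1<0$, the orbit equals the open half-space $\{\eta\in\RR^d:\eta_1<0\}$, which is $\Theta_L$ modulo null sets. Hence $F$ is either null or cofinite in $\Theta_L$, giving $M=\{0\}$ or $M=L_2(\Theta_L)$.

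For the orthogonality formula, apply Plancherel to write $\langle f,\psi_{a,s,t}\rangle=\langle\hat f,\hat\pi(a,s,t)\hat\psi\rangle$. Viewing this inner product as the inverse Fourier transform, evaluated at $t$, of the function $\omega\mapsto(\det A)^{1/2}\hat f(\omega)\overline{\hat\psi(A^\tT S^\tT\omega)}$, Plancherel in $t$ gives
\begin{equation*}
\int_{\RR^d}\lvert\langle f,\psi_{a,s,t}\rangle\rvert^2\,dt
= \det A\int_{\Theta_L}\lvert\hat f(\omega)\rvert^2\,\lvert\hat\psi(A^\tT S^\tT\omega)\rvert^2\,d\omega.
\end{equation*}
Multiplying by the remaining Haar density $\lvert a\rvert^{-(d+1)}\,da\,ds$ and applying Fubini, the question reduces to evaluating, for each fixed $\omega\in\Theta_L$ with $\omega_1<0$, the inner integral
$\int_0^\infty\!\!\int_{\RR^{d-1}}\lvert\hat\psi(A^\tT S^\tT\omega)\rvert^2\,\frac{\det A}{a^{d+1}}\,da\,ds$, which I would attack by substituting $\eta=A^\tT S^\tT\omega$.

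The main technical step, and the one to watch carefully, is the Jacobian of this substitution. In the shearlet case $A^\tT S^\tT\omega=(a\omega_1,\,a^\gamma(s\omega_1+\tilde\omega))$, and the block-triangular structure yields Jacobian $a^{\gamma(d-1)}\lvert\omega_1\rvert^d$. In the Toeplitz case $A^\tT T_s^\tT$ has the first row $(a\omega_1,0,\ldots,0)$ and the remaining $s$-derivatives form a lower-triangular Toeplitz matrix with diagonal $a\omega_1$, giving Jacobian $a^{d-1}\lvert\omega_1\rvert^d$. In both cases $\det A/a^{d+1}$ exactly cancels the $a$-power in the Jacobian, and since $a=\eta_1/\omega_1$ with $\eta_1,\omega_1$ both negative, the remaining $\lvert\omega_1\rvert^d$ combines with $a^d$ to produce $\lvert\eta_1\rvert^d$. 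Thus the inner integral equals $\int_{\Theta_L}\lvert\hat\psi(\eta)\rvert^2/\lvert\eta_1\rvert^d\,d\eta=C_\psi$, independently of $\omega$, so \eqref{eq:square_Integrability} follows.

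With the formula in hand, admissibility is immediate: taking $f=\psi$ shows $\int\lvert\langle\psi,\psi_{a,s,t}\rangle\rvert^2\,d\mu=C_\psi\lVert\psi\rVert^2$, which is finite and nonzero iff $0<C_\psi<\infty$. Square integrability of $\pi$ in the sense of \eqref{eq:admissibility} then holds for any such $\psi$, and combined with the irreducibility established in the second paragraph we obtain the final assertion.
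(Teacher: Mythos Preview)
Your proof is correct, and the computational heart---Plancherel in the $t$-variable followed by the change of variables $\eta=A^\tT S^\tT\omega$ in $(a,s)$---is exactly what the paper does (the paper phrases the first step as a convolution identity, but it is the same Plancherel argument). The one substantive difference is how irreducibility is obtained. You prove it independently and up front, via Wiener's characterization of translation-invariant subspaces together with the transitivity of the dilation--shear action on the open half-space $\{\omega_1<0\}$. The paper instead first establishes \eqref{eq:square_Integrability} as an identity in $[0,\infty]$ valid for \emph{all} $f,\psi\in L_2(\Theta_L)$, and then reads irreducibility off from that identity: if $\langle f,\psi_{a,s,t}\rangle\equiv 0$ with $f,\psi$ nonzero, the right-hand side forces $\hat\psi=0$, a contradiction. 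Your route is more classical and keeps the two assertions logically independent; the paper's is more economical, since irreducibility falls out of the same computation without invoking Wiener's theorem.
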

%
\begin{proof}
Observe that
\begin{equation} \label{eq:shearlet_transform_convolution}
  \cSH_f(a,s,t) 
:=
  \langle f, \psi_{a,s,t}\rangle
= 
  \langle f,\det(A)^{-\frac{1}{2}} \psi(A^{-1} S^{-1}(\,\cdot\, -t))\rangle
= 
  f * \psi^*_{a,s,0}(t),
\end{equation}
where $\psi^*_{a,s,t}:=\overline{\psi_{a,s,t}(-\,\cdot\, )}$.
For fixed $a$ and $s$, since $f,\psi_{a,s,0}\in L_2(\RR^d)$, then by standard facts 
$f*\psi_{a,s,0}=\cF^{-1}(\hat{f}\widehat{\psi^*}_{a,s,0})$ and by 
Plancherel's theorem we obtain
\[
  \int_{\RR^d} \lvert f * \psi^*_{a,s,0}(t)\rvert^2 dt
=
\int_{\widehat\RR^d} 
  \lvert \hat{f}(\omega)\rvert^2
  \lvert \widehat{\psi^*}_{a,s,0}(\omega)\rvert^2 d\omega, 
\]
where the left hand side is finite if and only if the right hand side is such. Hence, by Fubini, 
\eqref{eq:represenatation_Fourier}, and \eqref{eq:shearlet_transform_convolution} we obtain
\begin{align*}
  \int_{\SS^{+}_{(T)}} \lvert \langle f,\psi_{a,s,t}\rangle\rvert^2\, \frac{da}{a^{d+1}}\, ds\, dt 
&=
  \int_{\SS^{+}_{(T)}} \lvert f * \psi^*_{a,s,0}(t)\rvert^2 dt\,  ds\, \frac{da}{a^{d+1}} 
\\
&= 
  \int_{\RR_+} \int_{\RR^{d-1}} \int_{\Theta_L} 
  \lvert \hat{f}(\omega)\rvert^2
  \lvert \widehat{\psi^*}_{a,s,0}(\omega)\rvert^2 d\omega\,  ds\, \frac{da}{a^{d+1}} 
\\
&= 
  \int_{\RR_+} \int_{\RR^{d-1}} \int_{\Theta_L} 
  \lvert\hat{f}(\omega)\rvert^2 \frac{\det(A)}{a^{d+1}} \lvert\hat{\psi}(S^\tT A^\tT \omega)\rvert^2 d\omega\, ds\,  da 
\\
&= 
  \int_{\Theta_L} \int_{\RR_+} \int_{\RR^{d-1}}
  \lvert\hat{f}(\omega)\rvert^2 
  \frac{\det(A)}{a^{d+1}} 
  \lvert\hat{\psi}(S^\tT A^\tT \omega)\rvert^2 \, ds\,  da \, d\omega.
\end{align*}
To continue  we need the concrete matrices $A$ and $S$ for the connected shearlet and Toeplitz shearlet groups. 
We restrict our attention to $\SS^+_T$. The conclusions for $\SS^+$ can be drawn in the same way
with slightly simpler substitutions.
Then
\begin{align*}
  T_{s}^\tT A_a^\tT \omega 
&=
  \begin{pmatrix} 
    1       & 0       &        & \cdots & 0      \\
    s_1     & 1       & 0      &        & \vdots \\
    s_2     & s_1     & 1      & 0      &        \\
    \vdots  & \ddots  & \ddots & \ddots & 0      \\
    s_{d-1} & s_{d-2} & \ldots & s_1    & 1
  \end{pmatrix} 
  \begin{pmatrix}
    a\omega_1 \\ 
    a\omega_2 \\
    \vdots    \\
    \vdots    \\
    a\omega_d
 \end{pmatrix}
=
  \left(
  \begin{array}{r}
    a\omega_1 \\
    s_1 a\omega_1 + a\omega_2 \\
    \vdots\\
    \vdots\\
    s_{d-1} a\omega_1 + a s_{d-2}\omega_2 + \ldots + a\omega_d
  \end{array}
  \right)
\end{align*}
and substituting 
$\xi_d := s_{d-1} a\omega_1 + \ldots + a\omega_d$, 
$\xi_{d-1} := s_{d-2} a\omega_1 + \ldots + a\omega_{d-2}$, $\ldots$ ,
$\xi_2 := s_1 a \omega_1 + a\omega_2$
we obtain with $d\widetilde{\xi}:=d \xi_2\ldots d\xi_d$ and $d\widetilde{\xi} = (a \lvert\omega_1\rvert)^{d-1} \, ds$ the equality
\begin{align*}
  \int_{\SS^{+}_{(T)}} \lvert\langle f,\psi_{a,s,t}\rangle\rvert^2\, \frac{da}{a^{d+1}}\, ds\, dt
&= 
  \int_{\Theta_L} \int_{\RR_+} \int_{\RR^{d-1}}  
  \lvert\hat{f}(\omega)\rvert^2 a^{-d}\lvert\omega_1\rvert^{-(d-1)}
  \lvert\hat{\psi}(a \omega_1, \xi_2, \ldots\, ,\xi_d )\rvert^2 
  \, d\widetilde{\xi}\,  da \, d\omega
\\
&= 
  \int_{\Theta_L} \int_{\RR_-} \int_{\RR^{d-1}}  
  \lvert\hat{f}(\omega)\rvert^2 \lvert\xi_1\rvert^{-d}
  \lvert\hat{\psi}(\xi_1, \xi_2, \ldots\, ,\xi_d )\rvert^2 
  \, d\widetilde{\xi}\,  d\xi_1 \, d\omega
\\
&=
  \int_{\Theta_L} \lvert\hat{f}(\omega)\rvert^2 d\omega
  \int_{\Theta_L} \frac{\lvert\hat{\psi}(\xi)\rvert^2 }{\lvert\xi_1\rvert^{d}} \, d\xi
=
  \lVert f\rVert_{L_2(\Theta_L)}^2 \int_{\Theta_L} \frac{\lvert\hat{\psi}(\xi )\rvert^2 }{\lvert\xi_1\rvert^{d}} \, d\xi
\end{align*}
where again the left hand side is finite if and only if the right hand side is such.
This yields \eqref{eq:square_Integrability} and since there exist functions $\psi \in L_2(\Theta_L)$ for which \eqref{eq:admissibility_s}
is finite we have verified \eqref{eq:admissibility}.
Next, we show how \eqref{eq:square_Integrability} implies the irreducibility of $\pi$. 
Suppose by contradiction that it is not irreducible. Then there exist two non-zero functions 
$f,\psi\in L_2(\RR^d)$ for which $ \langle f,\psi_{a,s,t}\rangle=0$ as a function of $(a,s,t)$. But then the previous string of equalities yields that the right hand side vanishes. But this in turn implies that either
$f=0$ or $\hat\psi=0$, a contradiction.
\end{proof}
The whole coorbit space setting  worked out in  \cite{DKST09,DST10,DHT12} for the full  shearlet  and Toeplitz shearlet groups 
can now be modified in a straightforward way to the connected groups.
\paragraph{Equivalent representations of $\TDS(d,\RR)$ and $\TDS_T(d,\RR)$.}
We want to exploit the isomorphisms $\kappa^+_{(T)}$ between $\SS_{(T)}^{+}$ and the subgroups $\TDS_{(T)}(d,\RR)$ 
of the symplectic group to define coorbit spaces for the latter subgroups with respect to their metaplectic representations. 
For this purpose, we define the diffeomorphism
\begin{equation*}
  Q \colon \Theta_L \to \Theta_L
\quad\text{with}\quad
  \xi
\mapsto 
  Q(\xi)
:= 
  -\frac{1}{2}(\xi^2_1,\xi_1\xi_2,\ldots,\xi_1\xi_{d})^\tT.
\end{equation*}
The inverse of $Q$ is given for $\xi_1 < 0$ by 
$
  Q^{-1}(\xi) 
= 
  \sqrt{2}(-\sqrt{-\xi_1},\frac{\xi_2}{\sqrt{-\xi_1}},\ldots,\frac{\xi_d}{\sqrt{-\xi_1}})^\tT
$.
Further, the absolute value of the determinants of the Jacobian $\cJ_Q$ of $Q$ and its inverse read
\begin{equation} \label{determinants}
    \left\lvert \det(\cJ_Q(\xi)) \right\rvert
  =
    2^{1-d} \lvert\xi_1\rvert^d
    \quad\text{and}\quad
    \left\lvert \det(\cJ_{Q^{-1}}(\xi)) \right\rvert
  =
    (\sqrt{2})^{d-2}\lvert\xi_1\rvert^{-\frac{d}{2}}
    .
  \end{equation}
Based on $Q$ we can define the isomorphism 
$\Psi\colon L_2(\Theta_L)\to L_2(\Theta_L)$ as $\Psi = \cF^{-1} \hat \Psi \cF$, where
\begin{equation} \label{def_Psi}
  \hat \Psi \hat f (\xi) 
= 
  \lvert\det(\cJ_{Q^{-1}}(\xi) )\rvert^{\frac{1}{2}} \hat f(Q^{-1}(\xi)).
\end{equation}
Its inverse is given by
\begin{equation*}
  \hat \Psi^{-1}F(\xi) 
= 
  \lvert\det(\cJ_{Q}(\xi))\rvert^{\frac{1}{2}}F(Q(\xi)).
\end{equation*}

The semi-direct product $\Sigma \rtimes H$ in \eqref{general_metaplectic} possesses a (mock) 
\emph{metaplectic representation} $\pi^m$ on $L_2(\Theta_L)$
defined for all $g \in \Sigma\rtimes H$ by
\begin{equation*}
  \hat \pi^m ( g) \hat f (\xi) 
:= 
  (\det M)^{-\frac{1}{2}} e^{-2\pi i\langle t,Q(\xi) \rangle} \hat f( M^{-1} \xi), \qquad f \in L_2(\Theta_L).
\end{equation*}
For $\TDS(d,\RR)$  this metaplectic representation becomes
\begin{equation} \label{meta_TDS}
  \hat{\pi}^m(a,s,t)\hat{f}(\xi)
=
  a^{\frac{1}{2} - \frac{d}{4} + \frac{\gamma}{2}(d-1)}
  e^{-2\pi i\langle t,Q(\xi) \rangle}
  \hat{f}(\widetilde{A}_{a,\gamma}^{-1} \widetilde{S}_s^{-1}\xi)
\end{equation}
with the matrices $\widetilde{A}_{a,\gamma}$ and $\widetilde{S}_s$ in \eqref{tilde_a_s},
and for 
$\TDS_T(d,\RR)$ just
\begin{equation} \label{meta_TDST}
  \hat{\pi}^m(a,s,t) \hat{f}(\xi) 
= 
  a^{\frac{d}{4}}
  e^{-2\pi i \langle t, Q(\xi)\rangle}
  \hat{f}(\sqrt{a}\, T_s^\tT \xi).
\end{equation}
We now show that these representations are equivalent to the representations \eqref{eq:shearlet_representation}
of the connected (Toeplitz) shearlet groups.
%
\begin{lemma} \label{iso_s_meta}
Let the isomorphism $\kappa^+\colon \SS^+ \rightarrow \TDS(d,\RR)$ be given by \eqref{eq:definition_g_+}.
Then the representation $\pi^m$ of $\TDS(d,\RR)$ in \eqref{meta_TDS} is equivalent 
to the representation $\pi$ of $\SS^+$ defined by \eqref{eq:shearlet_representation}
in the sense
\begin{equation}\label{eq:equivalence_regular}
  \hat \Psi\hat{\pi}^m(\kappa^{+}(\,\cdot\,)) \hat \Psi^{-1} 
= 
  \cF \pi \cF^{-1} 
= 
  \hat\pi.
\end{equation}
\end{lemma}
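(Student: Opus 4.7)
The plan is a direct verification: unfold both sides of \eqref{eq:equivalence_regular} applied to $\hat f\in L_2(\Theta_L)$ and show that the phase, the argument of $\hat f$, and the Jacobian prefactor all match. The identity on the right is already recorded in the paragraph above the lemma, so only the left equality has content.

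First I would compute $\hat\Psi\,\hat\pi^m(\kappa^+(a,s,t))\,\hat\Psi^{-1}\hat f(\omega)$ step by step. Writing $g:=\hat\Psi^{-1}\hat f$ and using \eqref{def_Psi} and \eqref{meta_TDS}, one obtains
\begin{equation*}
  \hat\Psi\,\hat\pi^m(\kappa^+(a,s,t))\,g(\omega)
 =
  \lvert\det\cJ_{Q^{-1}}(\omega)\rvert^{1/2}\,
  a^{\frac12-\frac{d}{4}+\frac{\gamma(d-1)}{2}}\,
  e^{-2\pi i\langle t,Q(Q^{-1}\omega)\rangle}\,
  g\bigl(\widetilde A_{a,\gamma}^{-1}\widetilde S_s^{-1}Q^{-1}\omega\bigr),
\end{equation*}
and expanding $g$ once more brings in another factor $\lvert\det\cJ_Q(\widetilde A_{a,\gamma}^{-1}\widetilde S_s^{-1}Q^{-1}\omega)\rvert^{1/2}$ together with $\hat f\bigl(Q\widetilde A_{a,\gamma}^{-1}\widetilde S_s^{-1}Q^{-1}\omega\bigr)$. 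The phase factor collapses immediately to $e^{-2\pi i\langle t,\omega\rangle}$, which is already the phase appearing in \eqref{eq:represenatation_Fourier}.

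The key (and in my view only nontrivial) step is the conjugation identity
\begin{equation*}
  Q\circ\widetilde A_{a,\gamma}^{-1}\widetilde S_s^{-1}\circ Q^{-1}(\omega)
 =
  A_{a,\gamma}^{\tT} S_s^{\tT}\,\omega
  \qquad(\omega\in\Theta_L,\ a>0),
\end{equation*}
which is really the motivation for the definition of $Q$. I would verify it by direct computation: using the explicit form of $Q^{-1}$ one gets $(Q^{-1}\omega)_1=-\sqrt{-2\omega_1}$ and $(Q^{-1}\omega)_j=\sqrt{2}\,\omega_j/\sqrt{-\omega_1}$ for $j\ge 2$; applying $\widetilde A_{a,\gamma}^{-1}\widetilde S_s^{-1}$ produces a vector whose first entry is $-a^{1/2}\sqrt{-2\omega_1}$ and whose remaining entries equal $a^{\gamma-1/2}\sqrt{2}(s_{j-1}\omega_1+\tilde\omega_{j-1})/\sqrt{-\omega_1}$; finally applying $Q$ and using $-\xi_1\xi_j/2$ for the tail components yields exactly $(a\omega_1,a^\gamma(s\omega_1+\tilde\omega))^\tT$, which is $A_{a,\gamma}^{\tT}S_s^{\tT}\omega$ since $a>0$ on $\SS^+$. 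This is the geometric heart of the proof.

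It then remains a routine amplitude bookkeeping: using \eqref{determinants} and $\xi_1=-a^{1/2}\sqrt{-2\omega_1}$ to evaluate $\lvert\det\cJ_Q(\widetilde A_{a,\gamma}^{-1}\widetilde S_s^{-1}Q^{-1}\omega)\rvert$, one finds that the product $\lvert\det\cJ_{Q^{-1}}(\omega)\rvert^{1/2}\lvert\det\cJ_Q(\cdots)\rvert^{1/2}$ simplifies to $a^{d/4}$, with the powers of $2$ and of $\lvert\omega_1\rvert$ cancelling exactly. Multiplying by the explicit scalar $a^{\frac12-\frac{d}{4}+\frac{\gamma(d-1)}{2}}$ from $\hat\pi^m$ yields $a^{(1+(d-1)\gamma)/2}=(\det A_{a,\gamma})^{1/2}$, which is precisely the prefactor in \eqref{eq:represenatation_Fourier}. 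Combining the matched phase, argument and amplitude gives $\hat\Psi\,\hat\pi^m(\kappa^+(a,s,t))\,\hat\Psi^{-1}=\hat\pi(a,s,t)$, completing the proof. The analogous computation for $\SS^+_T$ and \eqref{meta_TDST} will give the corresponding statement for $\kappa_T^+$, with the Toeplitz matrix $T_s$ playing the role of $S_s$.
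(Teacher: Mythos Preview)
Your proposal is correct and follows essentially the same approach as the paper: unfold $\hat\Psi\,\hat\pi^m\,\hat\Psi^{-1}$, establish the conjugation identity $Q(\widetilde A_{a,\gamma}^{-1}\widetilde S_s^{-1}\xi)=A_{a,\gamma}S_s^\tT Q(\xi)$ (equivalently your $Q\circ\widetilde A_{a,\gamma}^{-1}\widetilde S_s^{-1}\circ Q^{-1}=A_{a,\gamma}^\tT S_s^\tT$, since $A_{a,\gamma}$ is diagonal) by direct calculation, and then combine the Jacobian factors via \eqref{determinants} to obtain $a^{d/4}$ and hence $(\det A_{a,\gamma})^{1/2}$. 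The only cosmetic difference is that the paper proves the conjugation identity starting from $\xi$ rather than from $\omega=Q(\xi)$.
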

%
\begin{proof}
 First we verify that
  \begin{align} \nonumber
    Q(\widetilde{A}_{a,\gamma}^{-1}\widetilde{S}_s^{-1} \xi)
  &=
    Q
    \left(
      \begin{pmatrix}
        a^{\frac{1}{2}} & 0 \\ s a^{-\frac{1}{2}+\gamma} & a^{-\frac{1}{2} + \gamma}I_{d-1}
      \end{pmatrix}
      \begin{pmatrix} \xi_1 \\ \widetilde{\xi} \end{pmatrix}
    \right)
    \\
  &=
    Q(a^{\frac{1}{2}} \xi_1, s \xi_1 a^{-\frac{1}{2}+\gamma} + a^{-\frac{1}{2}+\gamma} \widetilde{\xi})
  =
    -\frac{1}{2}
    \left(a \xi_1^2, s a^\gamma \xi_1^2 + a^\gamma \xi_1 \widetilde{\xi}\right) \nonumber
    \\
    &=
      -\frac{1}{2}
    A_{a,\gamma}
    \begin{pmatrix}
      \xi_1^2 \\ s \xi_1^2 +  \xi_1 \widetilde{\xi}
    \end{pmatrix} \nonumber
    \\
  &=
    A_{a,\gamma} S_s^\tT Q(\xi) \label{eq:properties_Q_regular}.
  \end{align}
  Then we conclude by \eqref{def_Psi}, the definition of  $\hat \pi_m$ and  \eqref{eq:properties_Q_regular} that
  \begin{align*}
  &
    (\hat \Psi \hat{\pi}^m(a,s,t)\hat \Psi^{-1} \hat{\psi})(\xi)
  \\
  &=
    \lvert \det(\cJ_{Q^{-1}}(\xi))\rvert^{\frac{1}{2}}
    \hat{\pi}^m(a,s,t)(\hat \Psi^{-1} \hat{\psi})(Q^{-1}(\xi))
  \\
  &=
    \lvert \det(\cJ_{Q^{-1}}(\xi))\rvert^{\frac{1}{2}} 
    a^{\frac{1}{2} - \frac{d}{4} + \frac{\gamma}{2}(d-1)}
    (\hat \Psi^{-1} \hat{\psi})(\widetilde{A}_{a,\gamma}^{-1}\widetilde{S}_s^{-1} Q^{-1}(\xi))
    e^{-2\pi i \langle t,Q(Q^{-1}(\xi)) \rangle}
  \\
  &=
    \lvert \det(\cJ_{Q^{-1}}(\xi))\rvert^{\frac{1}{2}} 
    a^{\frac{1}{2} - \frac{d}{4} + \frac{\gamma}{2}(d-1)}
    \lvert \det(\cJ_Q(\widetilde{A}_{a,\gamma}^{-1}\widetilde{S}_s^{-1} Q^{-1}(\xi)))\rvert^{\frac{1}{2}}
    \hat{\psi}(Q(\widetilde{A}_{a,\gamma}^{-1}\widetilde{S}_s^{-1} Q^{-1}(\xi)))
    e^{-2\pi i \langle t,\xi \rangle}
  \\
  &=
    \lvert \det(\cJ_{Q^{-1}}(\xi))\rvert^{\frac{1}{2}} 
    a^{\frac{1}{2} - \frac{d}{4} + \frac{\gamma}{2}(d-1)}
    \lvert \det(\cJ_Q(\widetilde{A}_{a,\gamma}^{-1}\widetilde{S}_s^{-1} Q^{-1}(\xi)))\rvert^{\frac{1}{2}}
    \hat{\psi}(A_{a,\gamma} S_s^\tT \xi)
    e^{-2\pi i \langle t,\xi \rangle}.
  \end{align*}
By \eqref{determinants} we have
\begin{equation} \label{one_det}
  \lvert \det(\cJ_{Q^{-1}}(\xi)) \rvert^{\frac{1}{2}}
=
  (\sqrt{2})^{\frac{d-2}{2}}(\sqrt{-\xi_1})^{-\frac{d}{2}}.
\end{equation}
Simplifying
\begin{align*}
  \lvert \det(\cJ_{Q}(\widetilde{A}_{a,\gamma}^{-1}\widetilde{S}_s^{-1} Q^{-1}(\xi))) \rvert^{\frac{1}{2}}
&=
  \left\lvert 
  \det\left(\cJ_{Q}
  \left(
  \begin{pmatrix}
    a^{\frac{1}{2}}           & 0                         \\
    s a^{-\frac{1}{2}+\gamma} & a^{-\frac{1}{2} + \gamma}I_{d-1}
  \end{pmatrix}
  \begin{pmatrix}
    -\sqrt{-2\xi_1}\\
    \frac{\sqrt{2}\xi_2}{\sqrt{-\xi_1}} \\
    \vdots \\
    \frac{\sqrt{2}\xi_d}{\sqrt{-\xi_1}}
  \end{pmatrix}
  \right)
  \right)
  \right\rvert^{\frac{1}{2}}
\\
&=
  (
  2^{1-d}
  (\sqrt{-2a\xi_1})^{d}
  )^{\frac{1}{2}}
=
  (\sqrt{2})^{(1-d)+\frac{d}{2}}
  a^{\frac{d}{4}}
  (\sqrt{-\xi_1})^\frac{d}{2}
\end{align*}
we obtain further
\begin{align*}
  \lvert \det(\cJ_{Q^{-1}}(\xi)) \rvert^{\frac{1}{2}}
  \lvert \det(\cJ_{Q}(\widetilde{A}_{a,\gamma}^{-1}\widetilde{S}_s^{-1} Q^{-1}(\xi))) \rvert^{\frac{1}{2}}
&=
  (\sqrt{2})^{\frac{d-2}{2}}(\sqrt{-\xi_1})^{-\frac{d}{2}}
  (\sqrt{2})^{(1-d)+\frac{d}{2}}
  a^{\frac{d}{4}}
  (\sqrt{-\xi_1})^\frac{d}{2}
\\
&=
  a^{\frac{d}{4}}
\end{align*}
and finally
\begin{align*}
  (\hat \Psi \hat{\pi}^m(a,s,t)\hat \Psi^{-1} \hat{\psi})(\xi)
&=
  a^{\frac{1}{2} + \frac{\gamma}{2}(d-1)}
  \hat{\psi}(A_{a,\gamma} S_s^\tT \xi)
  e^{-2\pi i \langle t,\xi \rangle}
\\
&=
  \lvert \det(A_{a,\gamma})\rvert^\frac{1}{2}
  \hat{\psi}(A_{a,\gamma} S_s^\tT \xi)
  e^{-2\pi i \langle t,\xi \rangle}
\\
&=
  \hat{\pi}(a,s,t)\hat{\psi}(\xi).
  \qedhere
\end{align*}
\end{proof}

Equation \eqref{eq:equivalence_regular} can be illustrated by the following diagram.
\begin{center}
\begin{tikzpicture}[]
  \node (UL2a) {$\mathcal{U}(L_2(\Theta_L))$};
  \node (UL2b) [right = 3.5cm of UL2a] {$\mathcal{U}(L_2(\Theta_L))$};
  \node (S) [below =2cm of UL2a] {$\SS^+$};
  \node (G) [below =2cm of UL2b] {$\TDS(d,\RR)$};
  \draw[->] (S) to node[shape=rectangle,fill=white,fill opacity=1,inner sep=5pt] {$\hat{\pi}$} (UL2a);
  \draw[->] (UL2a) to node[shape=rectangle,fill=white,fill opacity=1,inner sep=5pt] {$\hat \Psi^{-1}(\,\cdot\,)\hat \Psi$} (UL2b);
  \draw[->] (S) to node[shape=rectangle,fill=white,fill opacity=1,inner sep=5pt] {$\kappa^{+}$} (G);
  \draw[->] (G) to node[shape=rectangle,fill=white,fill opacity=1,inner sep=5pt] {$\hat{\pi}^m$} (UL2b);
\end{tikzpicture}
\end{center}
%

%
\begin{lemma}
Let the isomorphism $\kappa_T^+\colon \SS_T^+ \rightarrow \TDS_T(d,\RR)$ be given by \eqref{eq:definition_g_+_T}.
Then the representations $\pi^m$ of $\TDS_T(d,\RR)$ in \eqref{meta_TDST} and $\pi$ of $\SS_T^+$ 
given by \eqref{eq:shearlet_representation}
are equivalent in the sense that
\begin{equation}\label{eq:equivalence_toeplitz}
  \hat \Psi\hat{\pi}^m(\kappa^{+}_{T}(\,\cdot\,))\hat \Psi^{-1} = \cF \pi \cF^{-1} = \hat\pi.
\end{equation} 
\end{lemma}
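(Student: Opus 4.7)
The proof will closely parallel that of Lemma~\ref{iso_s_meta}. The plan is to first establish a Toeplitz analogue of the equivariance relation \eqref{eq:properties_Q_regular}, namely
\begin{equation*}
  Q(\sqrt{a}\, T_s^\tT \xi)
=
  a\, T_s^\tT \, Q(\xi),
\end{equation*}
which one verifies componentwise: since the $i$-th component of $T_s^\tT\xi$ is
$s_{i-1}\xi_1+\cdots+s_1\xi_{i-1}+\xi_i$, the first coordinate of $Q(\sqrt{a}T_s^\tT\xi)$ is $-\tfrac{a}{2}\xi_1^2 = a\,Q(\xi)_1$, and for $i\geq 2$ the factor $\sqrt{a}\,\xi_1$ pulled out of $-\tfrac12\eta_1\eta_i$ combines the remaining $\sqrt{a}$ with the Toeplitz expression to produce exactly $a(T_s^\tT Q(\xi))_i$. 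This is the one computation that requires actual inspection, but it is a direct (and short) componentwise check.

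Next, I would plug this equivariance into the chain
\begin{equation*}
  (\hat\Psi \hat{\pi}^m(\kappa_T^+(a,s,t))\hat\Psi^{-1}\hat\psi)(\xi)
=
  \lvert\det(\cJ_{Q^{-1}}(\xi))\rvert^{\tfrac12}
  \bigl(\hat{\pi}^m(\kappa_T^+(a,s,t))\hat\Psi^{-1}\hat\psi\bigr)(Q^{-1}(\xi))
\end{equation*}
exactly as in Lemma~\ref{iso_s_meta}. After substituting the definition of $\hat\pi^m$ from \eqref{meta_TDST} and unfolding $\hat\Psi^{-1}$, the exponential factor becomes $e^{-2\pi i\langle t, Q(Q^{-1}(\xi))\rangle}=e^{-2\pi i\langle t,\xi\rangle}$, and the argument of $\hat\psi$ becomes $Q(\sqrt{a}T_s^\tT Q^{-1}(\xi)) = aT_s^\tT\xi$ by the equivariance. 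The rest is the bookkeeping of three Jacobian factors.

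The key simplification, again mirroring the previous proof, is to evaluate
\begin{equation*}
  \lvert\det(\cJ_{Q^{-1}}(\xi))\rvert^{\tfrac12}\,
  a^{\tfrac{d}{4}}\,
  \lvert\det(\cJ_{Q}(\sqrt{a}T_s^\tT Q^{-1}(\xi)))\rvert^{\tfrac12}.
\end{equation*}
Using \eqref{determinants} the first factor is $2^{(d-2)/4}\lvert\xi_1\rvert^{-d/4}$. For the third factor, one notes that the first coordinate of $\sqrt{a}T_s^\tT Q^{-1}(\xi)$ is $-\sqrt{-2a\xi_1}$ (the Toeplitz shearing does not touch the first entry), so this factor evaluates to $2^{(2-d)/4}a^{d/4}\lvert\xi_1\rvert^{d/4}$. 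Multiplying, the powers of $2$ and of $\lvert\xi_1\rvert$ cancel and the total prefactor collapses to $a^{d/2} = (\det A_a)^{1/2}$, which is precisely the prefactor of $\hat\pi(a,s,t)$ in \eqref{eq:represenatation_Fourier} for the Toeplitz case. Putting the pieces together yields $\hat\Psi\hat\pi^m(\kappa_T^+(a,s,t))\hat\Psi^{-1}\hat\psi = \hat\pi(a,s,t)\hat\psi$, which is \eqref{eq:equivalence_toeplitz}.

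The main obstacle is purely computational and lies in the equivariance step: one needs to confirm that although the Toeplitz matrix $T_s$ mixes several coordinates nontrivially, the particular quadratic map $Q$ that singles out the first coordinate still intertwines the action $\xi\mapsto\sqrt{a}T_s^\tT\xi$ (on the ``Fourier side'' of $\TDS_T(d)$) with the action $\eta\mapsto aT_s^\tT\eta$ (on the ``Fourier side'' of $\SS_T^+$). Once this intertwining is in hand, the rest reduces to the same Jacobian arithmetic used in the non-Toeplitz case.
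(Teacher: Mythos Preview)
Your argument is correct and follows the paper's proof almost verbatim: the same chain involving $\hat\Psi$, $\hat\pi^m$, and $\hat\Psi^{-1}$, the same use of the equivariance $Q(\sqrt{a}\,T_s^\tT\xi)=a\,T_s^\tT Q(\xi)$, and the same Jacobian arithmetic collapsing the prefactor to $a^{d/2}$.

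The one genuine difference is in how the equivariance relation is obtained. You verify $Q(T_s^\tT\xi)=T_s^\tT Q(\xi)$ by a direct componentwise computation, reading off $(T_s^\tT\xi)_i$ and comparing. The paper instead proceeds indirectly: it first observes that $\langle\sigma(t)\xi,\xi\rangle=-2\langle t,Q(\xi)\rangle$, then invokes the conjugation identity $T_s\sigma(t)T_s^\tT=\sigma(T_s t)$ already established in \eqref{eq:toeplitz_conjugation} (during the proof of Lemma~\ref{lemma:isomorphism_Splus_TDS_toep}) to get $\langle t,Q(T_s^\tT\xi)\rangle=\langle t,T_s^\tT Q(\xi)\rangle$ for all $t$, hence the equivariance. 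Your route is more self-contained and elementary; the paper's route recycles earlier work and makes explicit the conceptual link between $Q$ and the symmetric matrices $\sigma(t)$ underlying the metaplectic representation. Both are short and entirely valid.
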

\begin{proof}
  By definition of $\sigma(t)$ in \eqref{def:spec_sigma} we have
  \begin{align*}
    \langle \sigma(t_1,\tilde t) \xi, \xi \rangle
  &=
    \left\langle 
      \begin{pmatrix} t_1 & \frac{1}{2}\tilde{t}^\tT \\ \frac{1}{2}\tilde{t} & 0 \end{pmatrix}
      \begin{pmatrix} \xi_1 \\ \widetilde{\xi} \end{pmatrix}
    ,
      \begin{pmatrix} \xi_1 \\ \widetilde{\xi} \end{pmatrix}
    \right\rangle
  =
    \left\langle 
      \begin{pmatrix} t_1 \xi_1 + \frac{1}{2}\tilde{t}^\tT\widetilde{\xi} \\ \frac{1}{2}\xi_1\tilde{t}\end{pmatrix}
    ,
      \begin{pmatrix} \xi_1 \\ \widetilde{\xi} \end{pmatrix}
    \right\rangle
  \\
  &=
    t_1 \xi_1^2 + \frac{1}{2}\xi_1\tilde{t}^\tT\widetilde{\xi} + \frac{1}{2} \xi_1\tilde{t}^\tT\widetilde{\xi}
  =
    t_1 \xi_1^2 + \xi_1\tilde{t}^\tT\widetilde{\xi}
  \\
   &=
    -2\langle t,Q(\xi) \rangle.
  \end{align*}
  Using \eqref{eq:toeplitz_conjugation} we obtain for all $t \in \RR^d$ that
  \begin{equation*}
    -2\langle t,Q(T_s^\tT \xi) \rangle
  =
    \langle \sigma(t_1,\tilde{t}) T_s^\tT \xi,T_s^\tT \xi \rangle
  =
    \langle T_s \sigma(t_1,\tilde{t}) T_s^\tT \xi, \xi \rangle
  =
    \langle \sigma(T_s t) \xi,\xi \rangle
     \end{equation*}
    and by the above relation
    \begin{equation*}
    \langle \sigma(T_s t) \xi,\xi \rangle
  =
    -2 \langle T_s t, Q(\xi) \rangle
  =
    -2 \langle t,T_s^\tT Q(\xi) \rangle
  \end{equation*}
  such that $Q(T_s^\tT \xi) = T_s^\tT Q(\xi)$ and by definition of $Q$ further
  \begin{equation}\label{eq:properties_Q}
    Q(\sqrt{a} T_s^\tT \xi)
  =
    a T_s^\tT Q( \xi).
  \end{equation}
  Now we can compute
  \begin{align*}
  &
    (\hat \Psi \hat{\pi}^m(a,s,t)\hat \Psi^{-1} \hat{\psi})(\xi)
  \\
  &=
    \lvert \det(\cJ_{Q^{-1}}(\xi))\rvert^{\frac{1}{2}}\hat{\pi}^m(a,s,t)(\hat \Psi^{-1} \hat{\psi})(Q^{-1}(\xi))
  \\
  &=
    \lvert \det(\cJ_{Q^{-1}}(\xi))\rvert^{\frac{1}{2}} 
    a^{\frac{d}{4}} 
    (\hat \Psi^{-1} \hat{\psi})(\sqrt{a}T_s^\tT Q^{-1}(\xi))
    e^{-2\pi i \langle t,Q(Q^{-1}(\xi)) \rangle}
  \\
  &=
    \lvert \det(\cJ_{Q^{-1}}(\xi))\rvert^{\frac{1}{2}} 
    a^{\frac{d}{4}} 
    \lvert \det(\cJ_Q(\sqrt{a}T_s^\tT Q^{-1}(\xi)))\rvert^{\frac{1}{2}}
    \hat{\psi}(Q(\sqrt{a}T_s^\tT Q^{-1}(\xi)))
    e^{-2\pi i \langle t,\xi \rangle}
  \\
  &=
    \lvert \det(\cJ_{Q^{-1}}(\xi))\rvert^{\frac{1}{2}} 
    a^{\frac{d}{4}} 
    \lvert \det(\cJ_Q(\sqrt{a}T_s^\tT Q^{-1}(\xi)))\rvert^{\frac{1}{2}}
    \hat{\psi}(a T_s^\tT \xi)
    e^{-2\pi i \langle t,\xi \rangle}.
  \end{align*}
Applying
\begin{align*}
  \lvert \det(\cJ_{Q}(\sqrt{a}T_s^\tT Q^{-1}(\xi))) \rvert^{\frac{1}{2}}
&=
  \left\lvert 
  \det\left(\cJ_{Q}
  \left(\sqrt{a}
  \begin{pmatrix}
    1       &   &        &   \\
    s_1     & 1 &        &   \\
    \vdots  &   & \ddots &   \\
    s_{d-1} &   &        & 1
  \end{pmatrix}
  \begin{pmatrix}
    -\sqrt{-2\xi_1}\\
    \frac{\sqrt{2}\xi_2}{\sqrt{-\xi_1}} \\
    \vdots \\
    \frac{\sqrt{2}\xi_d}{\sqrt{-\xi_1}}
  \end{pmatrix}
  \right)
  \right)
  \right\rvert^{\frac{1}{2}}
\\
&=
  \left\lvert 
  \det
  \left(\cJ_{Q}
  \left(
  -\sqrt{-2a \xi_1},\ldots
  \right)
  \right)
  \right\rvert^{\frac{1}{2}}
=
  (
  2^{1-d}
  (\sqrt{-2a \xi_1})^{d}
  )^{\frac{1}{2}}
\\
&=
  (\sqrt{2})^{(1-d)+\frac{d}{2}}
  a^{\frac{d}{4}}
  (\sqrt{-\xi_1})^\frac{d}{2}
\end{align*}
and \eqref{one_det} we get 
$
  \lvert \det(\cJ_{Q^{-1}}(\xi)) \rvert^{\frac{1}{2}}
  \lvert \det(\cJ_{Q}(\sqrt{a}S_s^\tT Q^{-1}(\xi))) \rvert^{\frac{1}{2}}
=
  a^{\frac{d}{4}}
$ 
and finally
\begin{equation*}
  (\hat \Psi \hat{\pi}^m(a,s,t)\hat \Psi^{-1} \hat{\psi})(\xi)
=
  a^{\frac{d}{2}} 
  \hat{\psi}(a T_s^\tT \xi)
  e^{-2\pi i \langle t,\xi \rangle}
=
  \hat{\pi}(a,s,t)\hat{\psi}(\xi).
\end{equation*}
\end{proof}
Observe that, in light of \eqref{eq:equivalence_toeplitz}, a vector $\hat{\psi}\in L_2(\Theta_L)$, 
is admissible for $\hat\pi^m$ if and only if 
\begin{equation*}
  \int_{\Theta_L}
  \frac{\lvert \hat \Psi \hat{\psi}(\xi)\rvert^2}{\lvert\xi_1\rvert^d}
  \, d\xi
<
  +\infty.
\end{equation*}

%
\paragraph{Metaplectic coorbit spaces.}
Based on the equivalence of the metaplectic representations of the subgroups $\TDS_{(T)}(d,\RR)$ to square integrable 
representations of $\SS^+_{(T)}$ we can apply the results from Subsection~\ref{subsec:iso}
to define coorbit spaces with respect to $\TDS_{(T)}(d,\RR)$.

Let $\psi\in L_2(\Theta_L)$ be an admissible shearlet.
Then the transform
$\cSH_\psi\colon L_2(\Theta_L)\to L_2(\SS^+_{(T)})$ defined by
\begin{equation*}
  \cSH_\psi (f) (a,s,t)
=
  \langle f, \pi(a,s,t)\psi\rangle_{L_2(\Theta_L)}
\end{equation*}
is called the \emph{continuous (Toeplitz) shearlet transform}, whereas 
the transform
$\cSH^m_\psi\colon L_2(\Theta_L)\to L_2(\TDS_{(T)}(d))$ defined by
\begin{equation*}
  \cSH^m_\psi f (\kappa^{+}_{(T)}(a,s,t))
=
  \langle f, \pi^m(\kappa^{+}_{(T)}(a,s,t))\psi\rangle_{L_2(\Theta_L)}
\end{equation*}
is called the \emph{metaplectic continuous shearlet transform}. 

Using the above equivalences the 
\emph{(Toeplitz) shearlet coorbit spaces}
\begin{equation*}
  \cSC_{p,m}
:=
  \{ 
    f\in \cH_{1,w}^\sim 
  : 
    \cSH_\psi(f)\in L_{p,m}(\SS^+_{(T)})
  \}
\end{equation*}
and the \emph{metaplectic shearlet coorbit spaces}
\begin{equation*}
  \cSC^m_{p,m}
:=
  \{
    f\in (\widetilde{\cH}_{1,\widetilde{w}})^\sim 
  : 
    \cSH^m_\psi(f)\in L_{p,\widetilde{m}}(\TDS_{(T)}(d))
  \}
\end{equation*}
are diffeomorphic.

\section*{Acknowledgements}
This work has been supported by Deutsche Forschungsgemeinschaft (DFG), Grants DA 360/19--1 and STE 571/11--1.  
Some parts of the paper have been written during a stay at the Erwin-Schrödinger Institute (ESI), Vienna,  Workshop on ``Time-Frequency Analysis'', January 13-17 2014.  Therefore the support of ESI is also acknowledged.

F. De Mari and E.~De Vito  were partially supported by Progetto PRIN 2010-2011 
``Variet\`a reali e complesse: geometria, topologia e analisi armonica''.
They are members of the Gruppo Nazionale per l’Analisi Matematica, 
la Probabilità e le loro Applicazioni (GNAMPA)
of the Istituto Nazionale di Alta Matematica (INdAM).

S. Dahlke, S. Häuser, G. Steidl and G. Teschke were partially supported by DAAD Project 57056121, 
”Hochschuldialog mit Südeuropa 2013”.

\appendix
\section*{Appendix}
In the following we list the Lie brackets of the canonical matrices $D \in \mathcal{N}$ with the basis matrices $X_\nu$, $\nu \in \triangle$,
and $H_{1,0}$, $H_{0,1}$ from the root space decomposition of $\sptwor$
and the matrices $M_{\Gamma}$ defined by \eqref{matrix_M}.
\\[1ex]
\textbf{Case 1.} 
For 
$
D_1 =   a_1H_{1,0}   +   a_2H_{0,1}
$
we obtain
\begin{equation*}
\begin{array}{lcllcl}
  [D_1,X_{\alpha}]&=& (a_1-a_2)X_{\alpha}, &
	[D_1,X_{-\alpha}] &=& -(a_1-a_2)X_{-\alpha},
	\\
  {[}D_1,X_{\beta}]&=&  2a_2X_{\beta},&
	[D_1,X_{-\beta}]&=& - 2a_2X_{-\beta},
	\\
  {[}D_1,X_{\alpha+\beta}]&=&  (a_1+a_2)X_{\alpha+\beta}, & 
	[D_1,X_{-\alpha-\beta}]&=&  -(a_1+a_2)X_{-\alpha-\beta},
	\\
  {[}D_1,X_{2\alpha+\beta}]&=&  2a_1X_{2\alpha+\beta},&
  [D_1,X_{-2\alpha-\beta}]&=&  -2a_1X_{-2\alpha-\beta},
	\\
  {[}D_1,H_{1,0}]&=&  0,&
  [D_1,H_{0,1}]&=&   0
\end{array}
\end{equation*}
and
$M_\Gamma$ is a diagonal matrix with entries
{\small
\begin{equation*}
  \left(
    a_1 - a_2 - \Gamma,
    2a_2 - \Gamma,
    a_1 + a_2 - \Gamma,
    2a_1 - \Gamma,
    a_2 - a_1 - \Gamma,
    - 2a_2 - \Gamma,
    - a_2 - a_1 - \Gamma,
    - 2a_1 - \Gamma,
    - \Gamma,
    - \Gamma
  \right),
\end{equation*}}
where $a_1 \geq a_2 \geq 0$.

\textbf{Case 2.}
For
$
D_2 =  X_{-\alpha}
  +
  aH_{1,0}
  +
  aH_{0,1}
	$
we obtain
\begin{equation*}
\begin{array}{lcllcl}
  [D_2,X_{\alpha}]
&=&
  H_{1,0}-H_{0,1},
&
 [D_2,X_{-\alpha}]
&=&
  0,
\\
  {[}D_2,X_{\beta}]
&=& 
  2aX_{\beta},
&
	[D_2,X_{-\beta}]
&=&
  -2aX_{-\beta}+X_{-\alpha-\beta},
\\
  {[}D_2,X_{\alpha+\beta}]
&=&
  -2X_{\beta}+2aX_{\alpha+\beta},
&
	 [D_2,X_{-\alpha-\beta}]
&=&
  -2aX_{-\alpha-\beta}+X_{-2\alpha-\beta},
\\
  {[}D_2,X_{2\alpha+\beta}]
&=&
  -2X_{\alpha+\beta}+2aX_{2\alpha+\beta},
& 
  [D_2,X_{-2\alpha-\beta}]
&=&
  -2aX_{-2\alpha-\beta},
\\
  {[}D_2,H_{1,0}]
&=&
  X_{-\alpha},
&
  [D_2,H_{0,1}]
&=&
  -X_{-\alpha}
\end{array}
\end{equation*}
and 
{\small
\begin{equation*}
  M_\Gamma
=
  \begin{pmatrix}
    -\Gamma & 0           &  0           &  0           &  0      &  0           &  0           &  0           &  0      &  0      \\
     0      & 2a - \Gamma & -2           &  0           &  0      &  0           &  0           &  0           &  0      &  0      \\
     0      & 0           &  2a - \Gamma & -2           &  0      &  0           &  0           &  0           &  0      &  0      \\
     0      & 0           &  0           &  2a - \Gamma &  0      &  0           &  0           &  0           &  0      &  0      \\
     0      & 0           &  0           &  0           & -\Gamma &  0           &  0           &  0           &  1      & -1      \\
     0      & 0           &  0           &  0           &  0      & -2a - \Gamma &  0           &  0           &  0      &  0      \\
     0      & 0           &  0           &  0           &  0      &  1           & -2a - \Gamma &  0           &  0      &  0      \\
     0      & 0           &  0           &  0           &  0      &  0           &  1           & -2a - \Gamma &  0      &  0      \\
     1      & 0           &  0           &  0           &  0      &  0           &  0           &  0           & -\Gamma &  0      \\
    -1      & 0           &  0           &  0           &  0      &  0           &  0           &  0           &  0      & -\Gamma
  \end{pmatrix}
\end{equation*}}\noindent
with
\begin{equation*}
  \det \, M_\Gamma
=
  \Gamma^4 
  (\Gamma - 2a)^3 
  (\Gamma + 2a)^3, \qquad a \ge 0.
\end{equation*}
\textbf{Case 3.} 
For
$
  D_3
=
  bX_{\alpha}
  +
  bX_{-\alpha}
  +
  aH_{1,0}
  +
  aH_{0,1}
	$
we obtain
\begin{equation*}
\begin{array}{lcllcl}
  [D_3,X_{\alpha}]
&=&
  bH_{1,0}-bH_{0,1},
&
[D_3,X_{-\alpha}]
&=&
  -bH_{1,0}+bH_{0,1},
\\
  {[}D_3,X_{\beta}]
&=&
  2aX_{\beta}+bX_{\alpha+\beta},
&
[D_3,X_{-\beta}]
&=&
  -2aX_{-\beta} + bX_{-\alpha-\beta},
\\
  {[}D_3,X_{\alpha+\beta}]
&=&
  -2bX_{\beta}+2aX_{\alpha+\beta}+bX_{2\alpha+\beta},
&
[D_3,X_{-\alpha-\beta}]
&=&
  -2bX_{-\beta}-2aX_{-\alpha-\beta} \\
	& & & & &+ bX_{-2\alpha-\beta},
\\
  {[}D_3,X_{2\alpha+\beta}]
&=&
  -2bX_{\alpha+\beta}+2aX_{2\alpha+\beta},
&
  [D_3,X_{-2\alpha-\beta}]
&=&
  -2bX_{-\alpha-\beta} -2aX_{-2\alpha-\beta},
\\
  {[}D_3,H_{1,0}]
&=&
  -b X_{\alpha}+bX_{-\alpha},
&
  [D_3,H_{0,1}]
&=&
  b X_{\alpha}-bX_{-\alpha}
\end{array}
\end{equation*}
and
{\small
\begin{equation*}
  M_\Gamma
=
  \begin{pmatrix}
    -\Gamma & 0           &  0           &  0           &  0      &  0           &  0           &  0           & -b      &  b      \\
     0      & 2a - \Gamma & -2b          &  0           &  0      &  0           &  0           &  0           &  0      &  0      \\
     0      & b           &  2a - \Gamma & -2b          &  0      &  0           &  0           &  0           &  0      &  0      \\
     0      & 0           &  b           &  2a - \Gamma &  0      &  0           &  0           &  0           &  0      &  0      \\
     0      & 0           &  0           &  0           & -\Gamma &  0           &  0           &  0           &  b      & -b      \\
     0      & 0           &  0           &  0           &  0      & -2a - \Gamma & -2b          &  0           &  0      &  0      \\
     0      & 0           &  0           &  0           &  0      &  b           & -2a - \Gamma & -2b          &  0      &  0      \\
     0      & 0           &  0           &  0           &  0      &  0           &  b           & -2a - \Gamma &  0      &  0      \\
     b      & 0           &  0           &  0           & -b      &  0           &  0           &  0           & -\Gamma &  0      \\
    -b      & 0           &  0           &  0           &  b      &  0           &  0           &  0           &  0      & -\Gamma 
  \end{pmatrix}
\end{equation*}}
with
\begin{equation*}
  \det \, M_\Gamma
=
  \Gamma^2 
  (\Gamma^2 + 4 b^2)     
  (\Gamma - 2a)
  (\Gamma + 2a)
  \left((\Gamma - 2 a)^2 + 4 b^2 \right)
  \left((\Gamma + +2 a)^2 + 4 b^2 \right), \quad a,b>0.
\end{equation*}
\textbf{Case 4.} 
For
$
  D_4
=
  \eps X_{\alpha+\beta}
  -
  X_{-\alpha}
  -
  \frac{\eps}{2}X_{-2\alpha-\beta}
$ 
we obtain
\begin{equation*}
\begin{array}{lcllcl}
  [D_4,X_{\alpha}]
&=&
  -\eps X_{2\alpha+\beta}
  -\eps X_{-\alpha-\beta}
  -H_{1,0}
  +H_{0,1},
&
	 [D_4,X_{-\alpha}]
&=&
  2\eps X_{\beta},
\\
  {[}D_4,X_{\beta}]
&=&
  0,
&
[D_4,X_{-\beta}]
&=&
  -\eps X_{\alpha}
  -X_{-\alpha-\beta},
\\
  {[}D_4,X_{\alpha+\beta}]
&=&
  2X_{\beta}
  +\eps X_{-\alpha},
&
[D_4,X_{-\alpha-\beta}]
&=&
  -X_{-2\alpha-\beta}
  -\eps H_{1,0}
\\
&&&&&
  -\eps H_{0,1},
\\
  {[}D_4,X_{2\alpha+\beta}]
&=&
  2X_{\alpha+\beta}
  -2\eps H_{1,0},
&
  [D_4,X_{-2\alpha-\beta}]
&=&
  2\eps X_{-\alpha},
\\
  {[}D_4,H_{1,0}]
&=&
  -\eps X_{\alpha+\beta}
  -X_{-\alpha}
  -\eps X_{-2\alpha-\beta},
&
  [D_4,H_{0,1}]
&=&
  -\eps X_{\alpha+\beta}
  +X_{-\alpha}
\end{array}
\end{equation*}
and 
{\small
\begin{equation*}
  M_\Gamma
=
  \begin{pmatrix}
    -\Gamma &  0      &  0      &  0      &  0      & -\eps   &  0      &  0      &  0      &  0      \\
     0      & -\Gamma &  2      &  0      &  2\eps  &  0      &  0      &  0      &  0      &  0      \\
     0      &  0      & -\Gamma &  2      &  0      &  0      &  0      &  0      & -\eps   & -\eps   \\
    -\eps   &  0      &  0      & -\Gamma &  0      &  0      &  0      &  0      &  0      &  0      \\
     0      &  0      & \eps    &  0      & -\Gamma &  0      &  0      & 2\eps   & -1      &  1      \\
     0      &  0      &  0      &  0      &  0      & -\Gamma &  0      &  0      &  0      &  0      \\
    -\eps   &  0      &  0      &  0      &  0      & -1      & -\Gamma &  0      &  0      &  0      \\
     0      &  0      &  0      &  0      &  0      &  0      & -1      & -\Gamma & -\eps   &  0      \\
    -1      &  0      &  0      & -2\eps  &  0      &  0      & -\eps   &  0      & -\Gamma &  0      \\
     1      &  0      &  0      &  0      &  0      &  0      & -\eps   &  0      &  0      & -\Gamma    
  \end{pmatrix}
\end{equation*}}\noindent
with
$
  \det \, M_\Gamma
=
  \Gamma^{10}.
$
\\[1ex]
\textbf{Case 5.} 
For
$  D_5
=
  \frac{\eps}{2} X_{2\alpha+\beta}
  +
  \frac{b^2\eps}{2} X_{-2\alpha-\beta}
  +
  aH_{0,1}
$
we obtain
\begin{equation*}
\begin{array}{lcllcl}
  [D_5,X_{\alpha}]
&=&
  -aX_{\alpha} + b^2\eps X_{-\alpha-\beta},
	&
[D_5,X_{-\alpha}]
&= &
  \eps X_{\alpha+\beta},	
\\
  {[}D_5,X_{\beta}]
&=&
  2aX_{\beta},
	&
	[D_5,X_{-\beta}]
&=&
  -2aX_{-\beta},
\\
  {[}D_5,X_{\alpha+\beta}]
&=&
  a X_{\alpha+\beta}-b^2\eps X_{-\alpha},
	&
	[D_5,X_{-\alpha-\beta}]
&=&
  -\eps X_{\alpha}-aX_{-\alpha-\beta},
\\
  {[}D_5,X_{2\alpha+\beta}]
&=&
  2b^2\eps H_{1,0},
	&
  [D_5,X_{-2\alpha-\beta}]
&= &
  -2\eps H_{1,0},
\\
  {[}D_5,H_{1,0}]
&=&
  -\eps X_{2\alpha+\beta}  + b^2\eps X_{-2\alpha-\beta},
& 
 [D_5,H_{0,1}],
&=&
  0
\end{array}
\end{equation*}
and
{\small
\begin{equation*}
  M_\Gamma
=
  \begin{pmatrix}
    -a-\Gamma & 0         &  0        &  0        & 0        &  0         & -\eps     &  0      &  0       &  0      \\
     0        & 2a-\Gamma &  0        &  0        & 0        &  0         &  0        &  0      &  0       &  0      \\
     0        & 0         &  a-\Gamma &  0        & \eps     &  0         &  0        &  0      &  0       &  0      \\
     0        & 0         &  0        & -\Gamma   & 0        &  0         &  0        &  0      & -\eps    &  0      \\
     0        & 0         & -b^2\eps  &  0        & a-\Gamma &  0         &  0        &  0      &  0       &  0      \\
     0        & 0         &  0        &  0        & 0        & -2a-\Gamma &  0        &  0      &  0       &  0      \\
     b^2\eps  & 0         &  0        &  0        & 0        &  0         & -a-\Gamma &  0      &  0       &  0      \\
     0        & 0         &  0        &  0        & 0        &  0         &  0        & -\Gamma &  b^2\eps &  0      \\
     0        & 0         &  0        &  2b^2\eps & 0        &  0         &  0        & -2\eps  & -\Gamma  &  0      \\
     0        & 0         &  0        &  0        & 0        &  0         &  0        &  0      &  0       & -\Gamma 
  \end{pmatrix}
\end{equation*}}\noindent
with 
\begin{equation*}
  \det \, M_\Gamma
= 
  \Gamma^2 
  (\Gamma - 2a) 
  (\Gamma + 2a)
  (\Gamma^2 + 4b^2)
  \left((\Gamma -a)^2  + b^2 \right)
  \left((\Gamma^2 + a)^2 + b^2 \right), \quad a,b \ge 0.
\end{equation*}
\textbf{Case 6.}
For
$
  D_6
=
  \eta X_{\beta}
  +
  \frac{\eps}{2}X_{2\alpha+\beta}
  +
  b_2^2\eta X_{-\beta}
  +
  \frac{b_1^2\eps}{2}X_{-2\alpha-\beta}
	$
we obtain
\begin{equation*}
\begin{array}{lcllcl}
  [D_6,X_{\alpha}]
&=&
  -\eta X_{\alpha+\beta} + b_1^2\eps X_{-\alpha-\beta},
	&
	[D_6,X_{-\alpha}]
&=& 
  \eps X_{\alpha+\beta} - b_2^2\eta X_{-\alpha-\beta},
\\
  {[}D_6,X_{\beta}]
&=&
  b_2^2\eta H_{0,1},
	&
	[D_6,X_{-\beta}]
&=&
  -\eta H_{0,1},
\\
  {[}D_6,X_{\alpha+\beta}]
&=&
  b_2^2\eta X_{\alpha} - b_1^2\eps X_{-\alpha},
	&
[D_6,X_{-\alpha-\beta}]
&=&
  -\eps X_{\alpha} + \eta X_{-\alpha},	
\\
  {[}D_6,X_{2\alpha+\beta}]
&=&
  2b_1^2\eps H_{1,0},
&
  [D_6,X_{-2\alpha-\beta}]
&= &
  -2\eps H_{1,0},
\\
  {[}D_6,H_{1,0}]
&=&
  -\eps X_{2\alpha+\beta} + b_1^2\eps X_{-2\alpha-\beta},
&
  [D_6,H_{0,1}]
&= &
  -2\eta X_{\beta} + 2b_2^2\eta X_{-\beta}
\end{array}
\end{equation*}
and
{\small
\begin{equation*}
  M_\Gamma
=
  \begin{pmatrix}
    -\Gamma    &  0         &  b_2^2\eta &  0          &  0         &  0      & -\eps   &  0      &  0         &  0          \\
     0         & -\Gamma    &  0         &  0          &  0         &  0      &  0      &  0      &  0         & -2\eta      \\
    -\eta      &  0         & -\Gamma    &  0          &  \eps      &  0      &  0      &  0      &  0         &  0          \\
     0         &  0         &  0         & -\Gamma     &  0         &  0      &  0      &  0      & -\eps      &  0          \\
     0         &  0         & -b_1^2\eps &  0          & -\Gamma    &  0      &  \eta   &  0      &  0         &  0          \\
     0         &  0         &  0         &  0          &  0         & -\Gamma &  0      &  0      &  0         &  2b_2^2\eta \\
     b_1^2\eps &  0         &  0         &  0          & -b_2^2\eta &  0      & -\Gamma &  0      &  0         &  0          \\
     0         &  0         &  0         &  0          &  0         &  0      &  0      & -\Gamma &  b_1^2\eps &  0          \\
     0         &  0         &  0         &  2b_1^2\eps &  0         &  0      &  0      & -2\eps  & -\Gamma    &  0          \\
     0         &  b_2^2\eta &  0         &  0          &  0         & -\eta   &  0      &  0      &  0         & -\Gamma        
  \end{pmatrix}
\end{equation*}}\noindent
with
\begin{equation*}
  \det \, M_\Gamma
=
  \Gamma^2 
  (\Gamma^2 + 4b_2^2) 
  (\Gamma^2 + 4 b_1^2) 
  \left(\Gamma^2 + (b_1 - b_2)^2 \right) 
  \left(\Gamma^2 + (b_1 + b_2)^2 \right), 
\quad 
  b_1 \ge b_2 \ge 0.
\end{equation*}
\textbf{Case 7.} 
For
$
  D_7
=
  -
  X_{\alpha}
  -
  \eps X_{\beta}
  +
  \frac{\eps}{2b^2}X_{2\alpha+\beta}
  -
  b^2X_{-\alpha}
$
and
\begin{equation*}
\begin{array}{lcllcl}
  [D_7,X_{\alpha}]
&=&
  -\eps X_{\alpha+\beta} - b^2H_{1,0} + b^2H_{0,1}, 
	&
	[D_7,X_{-\alpha}]
&=& 
  \frac{\eps}{b^2} X_{\alpha+\beta}+H_{1,0}-H_{0,1},
\\
  {[}D_7,X_{\beta}]
&=&
  -X_{\alpha+\beta},
	&
	 [D_7,X_{-\beta}]
&=&
  -b^2 X_{-\alpha-\beta} - \eps H_{0,1},
\\
  {[}D_7,X_{\alpha+\beta}]
&=&
  2b^2 X_{\beta}- X_{2\alpha+\beta},
	&
	  [D_7,X_{-\alpha-\beta}]
&=&
  -\frac{\eps}{b^2} X_{\alpha} + \eps X_{-\alpha}
\\
&&&&&
+\, 2X_{-\beta}-b^2 X_{-2\alpha-\beta},
\\
  {[}D_7,X_{2\alpha+\beta}]
&=&
  2b^2 X_{\alpha+\beta},
&
  [D_7,X_{-2\alpha-\beta}]
&=&
  2X_{-\alpha-\beta} - \frac{2\eps}{b^2}H_{1,0},
\\
  {[}D_7,H_{1,0}]
&=&
  X_{\alpha} - \frac{\eps}{b^2} X_{2\alpha+\beta}-b^2X_{-\alpha},
&
  [D_7,H_{0,1}]
&=& 
  -X_{\alpha}  - 2\eps X_{\beta}+b^2X_{-\alpha}
\end{array}
\end{equation*}
and
{\small
\begin{equation*}
  M_\Gamma
=
  \begin{pmatrix}
    -\Gamma    &  0      &  0      &  0      &  0                &  0      & -\frac{\eps}{b^2} &  0                 &  1                & -1      \\
     0         & -\Gamma &  2b^2   &  0      &  0                &  0      &  0                &  0                 &  0                & -2\eps  \\
    -\eps      & -1      & -\Gamma &  2b^2   &  \frac{\eps}{b^2} &  0      &  0                &  0                 &  0                &  0      \\
     0         &  0      & -1      & -\Gamma &  0                &  0      &  0                &  0                 & -\frac{\eps}{b^2} &  0      \\
     0         &  0      &  0      &  0      & -\Gamma           &  0      &  \eps             &  0                 & -b^2              &  b^2    \\
     0         &  0      &  0      &  0      &  0                & -\Gamma &  2                &  0                 &  0                &  0      \\
     0         &  0      &  0      &  0      &  0                & -b^2    & -\Gamma           &  2                 &  0                &  0      \\
     0         &  0      &  0      &  0      &  0                &  0      & -b^2              & -\Gamma            &  0                &  0      \\
    -b^2       &  0      &  0      &  0      &  1                &  0      &  0                & -\frac{2\eps}{b^2} & -\Gamma           &  0      \\
     b^2       &  0      &  0      &  0      & -1                & -\eps   &  0                &  0                 &  0                & -\Gamma     
  \end{pmatrix}
\end{equation*}}
with $\det \, M_\Gamma = \Gamma^4 (\Gamma^2 + 4 b^2)^3$, $b > 0$.


\end{document}